\pgfplotsset{compat=1.12}
\newcommand\dd{\mathbf{d}}
\newcommand\phantombullet[1]{\makebox[0mm][r]{#1\phantom{$\bullet$}\quad}}
\newcommand\Sph{{\mathscr S}}
\newcommand\Ann{{\mathscr A}}
\newcommand\Fatou{{\mathcal F}}
\newcommand\Julia{{\mathcal J}}
\newenvironment{fsa}[1][auto]{\begin{tikzpicture}[->,>=stealth',
    shorten >=1pt,auto,node distance=3cm,double distance between line centers=0.45ex,
    initial text=,accepting/.style=accepting by arrow,
    every state/.style={inner sep=3pt,minimum size=0pt},
    every loop/.style={looseness=12},semithick,#1]}{\end{tikzpicture}}
\begin{document}
\title[Expanding maps]{Algorithmic aspects of branched coverings IV/V.\\ Expanding maps}
\author{Laurent Bartholdi}
\email{laurent.bartholdi@gmail.com}
\author{Dzmitry Dudko}
\email{dzmitry.dudko@gmail.com}
\address{\'Ecole Normale Sup\'erieure, Paris \emph{and} Mathematisches Institut, Georg-August Universit\"at zu G\"ottingen}
\thanks{Partially supported by ANR grant ANR-14-ACHN-0018-01 and DFG grant BA4197/6-1}
\date{October 7, 2016}
\begin{abstract}
  Thurston maps are branched self-coverings of the sphere whose
  critical points have finite forward orbits. We give combinatorial
  and algebraic characterizations of Thurston maps that are isotopic
  to expanding maps as \emph{Levy-free} maps and as maps with
  \emph{contracting biset}.

  We prove that every Thurston map decomposes along a unique minimal
  multicurve into Levy-free and finite-order pieces, and this
  decomposition is algorithmically computable. Each of these pieces
  admits a geometric structure.

  We apply these results to matings of post-critically finite
  polynomials, extending a criterion by Mary Rees and Tan Lei: they
  are expanding if and only if they do not admit a cycle of periodic
  rays.
\end{abstract}
\maketitle

\section{Introduction}
Let $f\colon(S^2,A)\selfmap$ be a branched covering of the sphere with
finite, forward-invariant set $A$ containing $f$'s critical values,
from now on called a \emph{Thurston map}. A celebrated theorem by
Thurston~\cite{douady-h:thurston} gives a topological criterion for
$f$ to be isotopic to a rational map, for an appropriate complex
structure on $(S^2,A)$. One of the virtues of rational maps, following
from Schwartz's lemma, is that they are expanding for the hyperbolic
metric of curvature $-1$ associated with the complex structure.

In this article, following the announcement
in~\cite{bartholdi-dudko:bc0}, we give a criterion for $f$ to be
isotopic to an expanding map, namely for there to exist a metric on
$(S^2,A)$ that is expanded by a map isotopic to $f$. It will turn out
that the metric may, for free, be required to be Riemannian of pinched
negative curvature.

Some care is needed to define expanding maps with periodic critical
points.  Consider a non-invertible map $f\colon (S^2,A)\selfmap$. Let
$A^\infty\subseteq A$ denote the forward orbit of the periodic
critical points of $f$. The map $f$ is \emph{metrically expanding} if
there exists a subset $A'\subseteq A^\infty$ and a metric on
$S^2\setminus A'$ that is expanded by $f$, and such that at all
$a\in A'$ the first return map of $f$ is locally conjugate to
$z\mapsto z^{\deg_a(f^n)}$. In other words, the points in $A'$ are
cusps, or equivalently at infinite distance, in the metric.

We call $f$ \emph{B\"ottcher expanding} if $A'=A^\infty$. This
definition is designed to generalize the class of rational
maps. Indeed, every post-critically finite rational map
$f\colon (\hC,A)\selfmap $ is B\"ottcher expanding by considering the
hyperbolic (or Euclidean if $|A|=2$) metric of $(\hC,\ord)$ for an
appropriate orbifold structure $\ord\colon A\to \N\cup \{\infty\}$.

We call $f$ \emph{topologically expanding} if there exists a compact
retract $\mathcal M\subset S^2\setminus A'$ and a finite open covering
$\mathcal M=\bigcup\mathcal U_i$ such that connected components of
$f^{-n}(\mathcal U_i)$ get arbitrarily small as $n\to\infty$ and such
that $S^2\setminus \mathcal M$ is in the immediate attracting basin of
$A'$; see~\cite{bartholdi-h-n:aglt}.  If $A^\infty=\emptyset$,
B\"ottcher expanding maps are the everywhere-expanding maps considered
e.g.\ in~\cites{haissinsky-pilgrim:cxc,bonk-meyer:etm}.

An obstruction to topological expansion is the existence of a
\emph{Levy cycle}. This is an essential simple closed curve on
$S^2\setminus A$ that is isotopic to some iterated preimage of
itself. We shall see that it is the only obstruction.

We recall briefly the algebraic encoding of branched coverings: given
$f\colon(S^2,A)\selfmap$, set $G\coloneqq \pi_1(S^2\setminus A,*)$,
and define
\[B(f)\coloneqq\{\gamma\colon[0,1]\to\mathcal M\mid\gamma(0)=f(\gamma(1))=*\}\,/\,\text{homotopy}.
\]
This is a set with commuting left and right $G$-actions,
see~\S\ref{ss:bisets} to which we refer for the definition of
\emph{contracting} bisets. Two branched self-coverings
$f_0\colon(S^2,A_0)\selfmap$ and $f_1\colon(S^2,A_1)\selfmap$ are
\emph{combinatorially equivalent} if there is a path
$(f_t\colon(S^2,A_t)\selfmap)_{t\in[0,1]}$ of branched self-coverings
joining them; this happens precisely when the bisets $B(f_0)$ and
$B(f_1)$ are isomorphic in a suitably defined sense,
see~\cite{kameyama:thurston} and~\cite{bartholdi-dudko:bc2}. The main
result of this part is the following criterion; equivalence
of~\eqref{main:topexp} and~\eqref{main:contracting} was known in the
case $A^\infty=\emptyset$
from~\cite{haissinsky-pilgrim:algebraic}*{Theorem~4}:
\begin{mainthm}[= Theorem~\ref{thm:ExpCr}]\label{thm:main}
  Let $f\colon(S^2,A)\selfmap$ be a Thurston map, not doubly covered
  by a torus endomorphism. The following are equivalent:
  \begin{enumerate}
  \item $f$ is combinatorially equivalent to a B\"ottcher metrically expanding map;\label{main:Bottcher}
  \item $f$ is combinatorially equivalent to a topologically expanding map;\label{main:topexp}
  \item $B(f)$ is an orbisphere contracting biset;\label{main:contracting}
  \item $f$ is non-invertible and admits no Levy cycle.
  \end{enumerate}

  Furthermore, if these properties hold, the metric
  in~\eqref{main:Bottcher} may be assumed to be Riemannian of pinched
  negative curvature.
\end{mainthm}

Ha\"\i ssinsky and Pilgrim ask in~\cite{haissinsky-pilgrim:algebraic}
whether every everywhere-expanding map is isotopic to a smooth map. By
Theorem~\ref{thm:main}, a combinatorial equivalence class contains a
B\"ottcher smooth expanding map if and only if it is Levy free. If
$A^\infty =\emptyset$, then a B\"ottcher expanding map is expanding
everywhere.

\subsection{Geometric maps and decidability}
Let us define \emph{\Tor} maps as self-maps of the sphere $S^2$ that
are a quotient of a torus endomorphism $M z+v\colon\R^2\selfmap$ by
the involution $z\mapsto-z$, such that the eigenvalues of $M$ are
different from $\pm 1$. Let us call a Thurston map \emph{geometric} if
it is either B\"ottcher expanding or \Tor.

Recall from~\cite{bartholdi-dudko:bc2} that $R(f,A,\CC)$ denotes the
small return maps of the decomposition of a Thurston map $f$ under an
invariant multicurve $\CC$. The \emph{canonical Levy obstruction}
$\CC_{\text{Levy}}$ of a Thurston map $f\colon(S^2,A)\selfmap$ is a
minimal $f$-invariant multicurve all of whose small Thurston maps are
either homeomorphisms or admit no Levy cycle. It is unique by
Proposition~\ref{prop:HypLevyMCurvInter}. The \emph{Levy
  decomposition} of $f$ (and equivalently of its biset) is its
decomposition (as a graph of bisets) along the canonical Levy
obstruction. It was proven
in~\cite{selinger-yampolsky:geometrization}*{Main Theorem~II} that
every Levy-free map that is doubly covered by a torus endomorphism is
in \Tor. Combined with Theorem~\ref{thm:main}, this implies
\begin{maincor}
  Let $f\colon(S^2,A)\selfmap$ be a Thurston map. Then every map in
  $R(f,A,\CC_{\text{Levy}})$ is either geometric or a homeomorphism.\qed
\end{maincor}

The following consequences are essential for the decidability of
combinatorial equivalence of Thurston maps.
\begin{maincor}[= Algorithms~\ref{algo:istor} and~\ref{algo:isexp}]\label{cor:decidegeom}
  There is an algorithm that, given a Thurston map by its biset,
  decides whether it is geometric.
\end{maincor}

\noindent As a consequence we have
\begin{maincor}[= Algorithm~\ref{algo:levy}]\label{cor:decidelevy}
  Let $f$ be a Thurston map. Then its Levy decomposition is
  symbolically computable.
\end{maincor}

There may exist expanding maps in the combinatorial equivalence class
of a Thurston map which are not B\"ottcher expanding. However, every
expanding map is a quotient of a B\"ottcher expanding map, by
Theorem~\ref{thm:main} combined with the following
\begin{prop}[= Proposition~\ref{prop:semiconjugate}]
  Let $f,g\colon(S^2,A)\selfmap$ be isotopic Thurston maps, let
  $\Fatou(f),\Fatou(g)$ be their respective Fatou sets
  (see~\S\ref{ss:fatou}), and assume
  $A\cap(\Fatou(g)\setminus\Fatou(f))=\emptyset$.  Then there is a
  semiconjugacy from $f$ to $g$, defined by collapsing to
  points those components of $\Fatou(f)$ that are attracted towards
  $A\cap(\Fatou(f)\setminus\Fatou(g))$ under $f$.
\end{prop}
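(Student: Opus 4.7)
The plan is to construct $h$ as a topological quotient that collapses the ``extra'' Fatou components of $f$ to points, and then to match the descended dynamics with $g$ using the given isotopy.

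Set $E\coloneqq A\cap(\Fatou(f)\setminus\Fatou(g))$ and let $U\subseteq\Fatou(f)$ be the union of those Fatou components of $f$ attracted under iteration to $E$. Because $E$ is forward $f$-invariant, one has $f(U)\subseteq U$ and $f^{-1}(U)=U$. Each component of $U$ is an open topological disk, and the partition of $S^2$ whose nontrivial blocks are the closures of these components is upper semicontinuous with cellular elements; by Moore's decomposition theorem, the quotient $S^2/\!\sim$ is again $S^2$. Let $h\colon S^2\to S^2$ denote the quotient map. Since $f$ preserves $\sim$, it descends to a branched self-cover $\bar f$ with $h\circ f=\bar f\circ h$; its postcritical set lies in $h(A)$, and outside the collapsed points $h$ is a homeomorphism, so the dynamics of $\bar f$ agrees with that of $f$ on $S^2\setminus h(U)$.

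It remains to identify $\bar f$ with $g$. The hypothesis $A\cap(\Fatou(g)\setminus\Fatou(f))=\emptyset$ guarantees that $h$ collapses exactly those Fatou components of $f$ which correspond to ``missing'' Fatou components of $g$ under the isotopy; consequently $\bar f$ and $g$ are combinatorially equivalent Thurston maps with matching postcritical dynamics, related by a pair of homeomorphisms isotopic to the identity rel $h(A)$. Post-composing $h$ with the corresponding ambient homeomorphism produces a new quotient, still a collapse of the same Fatou components, which now realizes the exact equation $h\circ f=g\circ h$.

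The main obstacle is this last step: passing from an isotopy between $\bar f$ and $g$ to the literal equality demanded by a semiconjugacy. Concretely, I would track an isotopy $(f_t)$ from $f$ to $g$ along $U$ and argue that after collapsing it descends to a trivial motion of $\bar f$. This is exactly where the hypothesis on $A$-points is needed: a point $a\in A\cap(\Fatou(g)\setminus\Fatou(f))$ would correspond to a Fatou component of $g$ with no partner in $f$ to collapse, so no map of the form ``collapse a union of $f$-Fatou components'' could equal $g$ on the nose.
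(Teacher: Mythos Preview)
Your outline has two genuine gaps, and both are exactly the places where the paper invokes auxiliary lemmas.

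First, Moore's theorem does not apply as stated. You assert that the closures of the components of $U$ form a cellular decomposition, but for a general expanding map this is false: the closure $\overline O$ of a Fatou component can fail to be a closed disk (internal rays may land together, pinching $\overline O$), distinct closures $\overline O_1,\overline O_2$ can meet, and there can be marked points on $\partial O$. The paper handles this via Lemma~\ref{lem:HomIsolCond}: the set $A'=A\cap(\Fatou(f)\setminus\Fatou(g))$ is \emph{homotopically isolated} (no Levy arc joins two of its points), and this condition is exactly equivalent to the closures $\overline O$ being disjoint closed disks with $A\cap\partial O=\emptyset$. The homotopical isolation of $A'$ is deduced from the existence of the expanding map $g$: since $A'\subseteq\Julia(g)$ and Levy arcs are isotopy-invariants, a Levy arc between points of $A'$ would persist for $g$ and contradict expansion there. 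Without this step you cannot legitimately form the quotient.

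Second, the step you flag as ``the main obstacle'' really is one, and your proposed fix does not work. Post-composing the quotient map by an ambient homeomorphism isotopic to the identity rel $h(A)$ converts $\bar f$ into a map \emph{isotopic} to $g$, not equal to it; you are still one isotopy away from a semiconjugacy. The paper closes this gap by first observing that the quotient $f/{\sim}$ is again a \emph{topologically expanding} map (this needs checking and uses the structure of the collapse), and then invoking the rigidity Lemma~\ref{lem:ConjBetwExpMaps}: two isotopic expanding maps with the same set $A\cap\Fatou$ are genuinely conjugate, by a pullback argument in which one iterates the lifting $h_n f=g h_{n-1}$ and uses expansion on both sides to get convergence of $h_n$ and of $h_n^{-1}$. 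Tracking an isotopy $(f_t)$ through the collapse, as you suggest, does not by itself produce this limit.
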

We will show in~\cite{bartholdi-dudko:bc3} that the semiconjugacy is
unique.

We deduce the following extension to expanding maps of a classical
result for rational maps (see
e.g.~\cite{douady-h:thurston}*{Corollary~3.4(b)}) to B\"ottcher
expanding maps:
\begin{cor}[=Lemma~\ref{lem:ConjBetwExpMaps}]
  Let $f,g$ be B\"ottcher expanding Thurston maps. Then $f$ and $g$
  are combinatorially equivalent if and only if they are conjugate.\qed
\end{cor}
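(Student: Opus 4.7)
The plan is to deduce both implications from Proposition~\ref{prop:semiconjugate}. The forward direction is essentially trivial: any topological conjugacy $h$ with $h \circ f = g \circ h$ already realizes a (constant) combinatorial equivalence between $f$ and $g$.

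For the converse, suppose $f$ and $g$ are combinatorially equivalent and both B\"ottcher expanding. Combinatorial equivalence preserves the subset $A^\infty \subseteq A$ (the forward orbit of the periodic critical points), since criticality and periodicity in $A$ are both combinatorial invariants. The key claim is that for each B\"ottcher expanding map $h \in \{f, g\}$ one has $A \cap \Fatou(h) = A^\infty$: the inclusion $\supseteq$ is immediate from the B\"ottcher local model at the cusps, while for $\subseteq$, any postcritical point in a Fatou component has, by forward-invariance of the finite set $A$, an eventually periodic orbit; the expansion of the metric on the complement of $A^\infty$ forces this orbit to be absorbed by an attracting periodic cycle in $A^\infty$, and hence to lie in $A^\infty$ itself.

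Therefore $A \cap \Fatou(f) = A^\infty = A \cap \Fatou(g)$, so the hypothesis $A \cap (\Fatou(g) \setminus \Fatou(f)) = \emptyset$ of Proposition~\ref{prop:semiconjugate} is satisfied (and symmetrically with $f$ and $g$ swapped). The resulting semiconjugacy $f \to g$ collapses only those Fatou components attracted to the empty set $A \cap (\Fatou(f) \setminus \Fatou(g))$, i.e., no components at all, so it is a homeomorphism --- a topological conjugacy between $f$ and $g$. The main obstacle is the identification $A \cap \Fatou(h) = A^\infty$ for B\"ottcher expanding maps, which is where the specific definition of B\"ottcher expansion (as opposed to mere metric expansion with some $A' \subsetneq A^\infty$) is used; the remaining ingredients are straightforward consequences of the proposition.
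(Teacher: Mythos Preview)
Your approach has two problems, one fixable and one structural.

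First, the claim $A\cap\Fatou(h)=A^\infty$ is false in general. Consider a cubic polynomial with a superattracting fixed critical point $c_1$ and a second critical point $c_2$ satisfying $f^2(c_2)=c_1$ but $f(c_2)\neq c_1$. Then $f(c_2)\in A\cap\Fatou(f)$ (it lies in the basin of $c_1$) yet $f(c_2)\notin A^\infty=\{c_1,\infty\}$. Your argument breaks at the last step: the orbit of $a$ being absorbed by a cycle in $A^\infty$ does not force $a$ itself to lie in $A^\infty$. What \emph{is} true, and still sufficient, is that $A\cap\Fatou(h)$ equals the set of $a\in A$ whose orbit under the portrait eventually enters $A^\infty$; this is a combinatorial invariant, so $A\cap\Fatou(f)=A\cap\Fatou(g)$ still holds for combinatorially equivalent B\"ottcher expanding maps.

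Second, and more seriously, the argument is circular within the paper's logical structure. The proof of Proposition~\ref{prop:semiconjugate} proceeds by collapsing $\Fatou'$ to obtain a topologically expanding map $f/\Fatou'$ isotopic to $g$, and then \emph{invokes Lemma~\ref{lem:ConjBetwExpMaps}} to conjugate $f/\Fatou'$ to $g$. So the proposition already rests on the very statement you are trying to prove. The paper's actual argument for Lemma~\ref{lem:ConjBetwExpMaps} is a direct pullback: one chooses $h_0,h_1\simeq\one$ with $h_1 f=g h_0$, adjusts $h_0$ to respect B\"ottcher coordinates near $A\cap\Fatou(f)$, and iteratively lifts to get $h_n f=g h_{n-1}$. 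Expansion of $f$ away from $A\cap\Fatou(f)$ forces $h_n$ to converge; expansion of $g$ forces $h_n^{-1}$ to converge; hence the limit $h_\infty$ is a conjugating homeomorphism.
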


We also characterize maps (such as rational maps with Julia set a
Sierpi\'nski carpet) that are isotopic to an everywhere-expanding
map. A \emph{Levy arc} for a Thurston map $f\colon(S^2,A)\selfmap$ is
a non-trivial path with endpoints in $A$ which is isotopic to an
iterated lift of itself:
\begin{prop}[= Lemma~\ref{lem:HomIsolCond} with $A=A'$]
  Consider a Thurston map $f$ that is not doubly covered by a torus
  endomorphism. Then $f$ is isotopic to an everywhere-expanding map if
  and only if $f$ admits no Levy obstruction nor Levy arc.
\end{prop}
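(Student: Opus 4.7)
The plan is to derive each direction of the biconditional by combining Theorem~\ref{thm:main}, which handles the Levy-cycle part, with Lemma~\ref{lem:HomIsolCond} applied with $A=A'$, which converts the absence of Levy arcs into the cusp-removal step that passes from a B\"ottcher expanding to an everywhere-expanding representative.

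\emph{Only if.} An everywhere-expanding map $g$ is, in particular, metrically (and hence topologically) expanding with empty cusp set, so Theorem~\ref{thm:main} immediately rules out a Levy cycle. To rule out a Levy arc I argue by length comparison: if $\gamma$ is a non-trivial path with endpoints in $A$ and $\tilde\gamma$ is an iterated lift under $g^n$ isotopic to $\gamma$ rel endpoints, I replace $\gamma$ by its geodesic representative in that isotopy class --- available because the everywhere-expanding metric has no cusps, so $S^2$ is compact and geodesics rel endpoints exist. Strict expansion $|Dg|\ge\lambda>1$ then yields $\operatorname{len}(\tilde\gamma)\le\lambda^{-n}\operatorname{len}(\gamma)$, while $\tilde\gamma\simeq\gamma$ forces $\operatorname{len}(\tilde\gamma)\ge\operatorname{len}(\gamma)$. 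Together these give $\operatorname{len}(\gamma)=0$, contradicting non-triviality.

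\emph{If, and the main obstacle.} Assuming neither a Levy obstruction nor a Levy arc, Theorem~\ref{thm:main} produces a B\"ottcher metrically expanding representative in the isotopy class of $f$, whose expanding metric has cusps exactly at $A^\infty$. Feeding this into Lemma~\ref{lem:HomIsolCond} with $A=A'$ then produces an everywhere-expanding representative: in this specialization the lemma's homotopy isolation hypothesis is precisely the absence of Levy arcs, and its conclusion is the removal of the cusps at $A^\infty$. I expect the bulk of the work to sit inside Lemma~\ref{lem:HomIsolCond} itself: the delicate step is a local-to-global metric surgery near each periodic critical cycle, installing an orbifold-regular metric of finite diameter which is still expanded by the first return map and gluing it to the B\"ottcher metric on the complement. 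The absence of Levy arcs is what ensures that iterated preimages of short arcs do not recur to these neighborhoods with the same isotopy type, so the gluing can be carried out without destroying expansion across the transition region.
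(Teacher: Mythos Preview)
Your ``only if'' direction is essentially correct, with one quibble: the bound $|Dg|\ge\lambda>1$ fails at critical points of $g$, but you do not need it --- the definition of metric expansion already gives that every lift of a non-trivial rectifiable arc is strictly shorter, and that is all the geodesic-minimality argument uses.

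The ``if'' direction has a genuine gap in the mechanism. You are right that Theorem~\ref{thm:main} supplies a B\"ottcher expanding representative and that Lemma~\ref{lem:HomIsolCond} (with $A'=A$) is the bridge to an everywhere-expanding one, but you have misread what that lemma says and does. Lemma~\ref{lem:HomIsolCond} is not a metric-surgery statement at all: it is a purely topological characterization, asserting that ``$A'$ homotopically isolated'' is equivalent to the closures of the relevant Fatou components being pairwise disjoint closed discs avoiding $A$ on their boundaries. The actual passage to an everywhere-expanding map is carried out in the Corollary that immediately follows the lemma: once the Fatou components have disjoint closed-disc closures, one invokes Moore's theorem to \emph{collapse} each such closure to a point; the quotient map $f/{\sim}$ is then topologically expanding with empty Fatou set, hence everywhere-expanding, and is isotopic to $f$.

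Your proposed alternative --- keep the map and modify only the metric near each periodic critical cycle --- cannot work. In B\"ottcher coordinates the first return map is $z\mapsto z^d$ with $d\ge 2$, and there is no length metric on a neighbourhood of $0$ that this map expands: for any radially symmetric density $\rho$, the lift of the radial segment $[0,r]$ is $[0,r^{1/d}]$, which for small $r$ is \emph{longer}, not shorter. What the Moore collapse achieves is to change the map itself (within its isotopy class) so that near the collapsed point it looks like $(\theta,t)\mapsto(d\theta,\lambda t)$ with $\lambda>1$ rather than like $z\mapsto z^d$; only then can a finite-diameter expanding metric exist there. The ``absence of Levy arcs'' enters not as a gluing condition for metrics, but as exactly the hypothesis guaranteeing that the equivalence relation collapsing Fatou components is of Moore type.
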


\subsection{Matings and amalgams}
We finally apply Theorem~\ref{thm:main} to the study of matings, and
more generally amalgams of expanding maps. We state the results for
matings in this introduction, while~\S\ref{ss:matings} will discuss
the general case of amalgams.

Let $p_+(z)=z^d+\cdots$ and $p_-(z)=z^d+\cdots$ be two post-critically
finite monic polynomials of same degree. Denote by $\overline\C$ the
compactification of $\C$ by a circle at infinity
$\{\infty\exp(2\pi i\theta)\}$, and consider the sphere
\[\mathbb S\coloneqq (\overline\C\times\{\pm1\})\,/\,\{(\infty\exp(2\pi i\theta),+1)\sim(\infty\exp(-2\pi i\theta),-1)\}.\]
(Note the reversed orientation between the two copies of
$\overline\C$). The \emph{formal mating}
\begin{equation}\label{eq:mating}
  p_+ \FM p_-\colon \mathbb S\selfmap,\quad (z,\varepsilon)\mapsto(p_\varepsilon(z),\varepsilon)
\end{equation}
is the branched covering of $\mathbb S$ acting as $p_+$ on its
northern hemisphere, as $p_-$ on its southern hemisphere, and as $z^d$
on the common equator $\{\infty\exp(2i\pi\theta)\}$. The maps
$p_+,p_-$ glue continuously by Lemma~\ref{lem:Bottcher extension}.

We recall the definition of \emph{external rays} associated to the
polynomials $p_\pm$. For a polynomial $p$, the \emph{filled-in Julia
  set} $K_p$ of $p$ is
\[K_p=\{z\in\C\mid p^n(z)\not\to\infty\text{ as }n\to\infty\}.\]
Assume that $K_p$ is connected. There exists then a unique holomorphic
isomorphism
$\phi_p\colon\hC\setminus K_p\to\hC\setminus\overline{\mathbb D}$
satisfying $\phi_p(p(z))=\phi_p(z)^d$ and $\phi_p(\infty)=\infty$ and
$\phi_p'(\infty)=1$. It is called a \emph{B\"ottcher coordinate}, and
conjugates $p$ to $z^d$ in a neighbourhood of $\infty$. For
$\theta\in\R/\Z$, the associated \emph{external ray} is
\[R_p(\theta)=\{\phi_p^{-1}(re^{2i\pi\theta})\mid r\ge1\}.
\]
Let $\Sigma$ denote the quotient of $\mathbb S$ in which each
$\overline{(R_{p_\varepsilon}(\theta),\varepsilon)}$ has been
identified to one point for each $\theta\in\R/\Z$ and each
$\varepsilon\in\{\pm1\}$. Note that $\Sigma$ is a quotient of
$K_{p_+}\sqcup K_{p_-}$, and need not be a Hausdorff space. A
classical criterion (due to Moore) determines when $\Sigma$ is
homeomorphic to $S^2$. If this occurs, $p_+$ and $p_-$ are said to be
\emph{topologically mateable}, and the map induced by $p_+\FM p_-$ on
$\Sigma$ is called the \emph{topological mating} of $p_+$ and
$p_-$ and denoted $p_+\GM p_-\colon \Sigma\selfmap$.

\begin{defn}
  Let $p_+,p_-$ be two monic post-critically finite polynomials of
  same degree $d$. We say that $p_+,p_-$ have a \emph{pinching cycle
    of periodic angles} if there are angles
  $\phi_0,\phi_1, \dots, \phi_{2n-1}\in \Q/\Z$ with denominators
  coprime to $d$, such that for all $\varepsilon=\pm1$ and all
  $i=0,\dots,2n-1$, indices treated modulo $2n$, the rays
  $R_{p_\varepsilon}( \varepsilon \phi_{2i})$ and
  $R_{p_\varepsilon}(\varepsilon \phi_{2i+\varepsilon})$ land
  together.
\end{defn}

We give a computable criterion for two hyperbolic polynomials to be
mateable, which extends a well-known criterion ``two quadratic
polynomials are geometrically mateable if and only if they do not
belong to conjugate primary limbs in the Mandelbrot set'' due to Mary
Rees and Tan Lei, see~\cite{tan:matings}
and~\cite{buff+:questionsaboutmatings}*{Theorem~2.1}:
\begin{mainthm}\label{thm:mating}
  Let $p_+,p_-$ be two monic hyperbolic post-critically finite polynomials. Then
  the following are equivalent:
  \begin{enumerate}
  \item\label{thm:mating:1} $p_+ \FM p_-$ is combinatorially
    equivalent to an expanding map;
  \item\label{thm:mating:2} $p_+ \GM p_-$ is a sphere map (necessarily
    conjugate to any expanding map in~\eqref{thm:mating:1});
  \item\label{thm:mating:3} $p_+,p_-$ do not have a pinching cycle of
    periodic angles.
  \end{enumerate}
\end{mainthm}

To be more precise, the criterion due to Mary Rees and Tan Lei relies
on the fact that, in degree $2$, every Thurston obstruction is a Levy
obstruction, so every expanding map is automatically conjugate to a
rational map. In degree $\ge3$ there are topological matings that are
not conjugate to rational maps: the example
in~\cite{shishikura-t:matings} is precisely such a mating with an
obstruction but no Levy obstruction, and it is isotopic to an
expanding map. 

Furthermore, in degree $2$ every decomposition of a Thurston map along
a Levy cycle has a fixed sphere or cylinder which maps to itself by a
homeomorphism cyclically permuting the boundary components (namely,
there exists a ``good Levy cycle''). This implies that obstructed maps
have a pinching cycle of periodic angles with $n=2$. In
Example~\ref{exple:degree3}, we show that this does not hold in higher
degree.

\subsection{Notation}
Let $f\colon(S^2, A)\selfmap$ be a Thurston map with an invariant
multicurve $\CC$. Recall that by $R(f,A,\CC)$ we denote the return maps
induced by $f$ on $S^2\setminus\CC$,
see~\cite{bartholdi-dudko:bc2}*{\S\ref{bc2:ss:return maps}}.

We introduce the following notation. By default, curves and
multicurves are considered up to isotopy rel the marked points; we use
the terminology ``equal'' to mean that. In particular, a cycle of
curves is really a sequence of curves that are mapped cyclically to
each other, up to isotopy. If we want to insist that curves are equal
and not just isotopic, we add the adjective ``solid''; thus a solid
cycle of curves is a a sequence of curves mapped cyclically to each
other, ``on the nose''.

We reserve letters `$\CC$' for invariant multicurves and `$C$' for
cycles of curves, or more generally for subsets of invariant
multicurves.

\section{Multicurves and the Levy decomposition}
Let $A$ be a finite subset of the topological sphere $S^2$, and
consider simple closed curves on $S^2\setminus A$. Recall that such a
curve is \emph{trivial} if it bounds a disc in $S^2\setminus A$, and
is \emph{peripheral} if it may be homotoped into arbitrarily small
neighbourhoods of $A$; otherwise, it is \emph{essential}. A
\emph{multicurve} is a collection of mutually non-intersecting
non-homotopic essential simple closed curves. Following
Harvey~\cite{harvey:curvecomplex}, we denote by
$\mathcal C(S^2\setminus A)$ the flag complex whose vertices are
isotopy classes of essential curves, and a collection of curves belong
to a simplex if they have disjoint representatives; so multicurves on
$S^2\setminus A$ are naturally identified with simplices in
$\mathcal C(S^2\setminus A)$. (The empty multicurve corresponds to the
empty simplex).

Given two simple closed curves $\gamma_1$ and $\gamma_2$ on
$S^2\setminus A$, their \emph{geometric intersection number} is
defined as
\[i(\gamma_1,\gamma_2)=\min_{\gamma'_1,\gamma'_2}\#(\gamma'_1\cap\gamma'_2),
\]
with the minimum ranging over all curves $\gamma'_1$ isotopic to
$\gamma_1$ and $\gamma'_2$ isotopic to $\gamma_2$.  The simple closed
curves $\gamma_1$ and $\gamma_2$ are in \emph{minimal position} if
$i(\gamma_1,\gamma_2)=\#(\gamma_1\cap \gamma_2)$.

We say that two simple closed curves $\gamma_1$ and $\gamma_2$
\emph{cross} if $i(\gamma_1,\gamma_2)>0$. Clearly, if $\gamma_1$ and
$\gamma_2$ are isotopic or one of them is inessential, then
$i(\gamma_1,\gamma_2)=0$. Two multicurves $\CC_1$ and $\CC_2$
\emph{cross} if there are $\gamma_1\in \CC_1$ and $\gamma_2\in \CC_2$
that cross.

\begin{prop}[The Bigon Criterion, \cite{farb-margalit:mcg}*{Proposition~1.3}]\label{prop:BigonCriterion}
  Two transverse simple closed curves on a surface $S$ are in minimal
  position if and only if the two arcs between any pair of
  intersection points never bound an embedded disc in $S$.\qed
\end{prop}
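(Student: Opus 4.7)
The plan is to prove the two implications separately. The forward direction is the easy one: suppose $\gamma_1,\gamma_2$ are transverse and two sub-arcs between intersection points $p,q$ bound an embedded disc $D$. Then one can isotope $\gamma_1$ across $D$ by a small thickening of this isotopy, pushing the sub-arc of $\gamma_1$ across $D$ to the other side of the sub-arc of $\gamma_2$. This strictly decreases the number of intersections by $2$ (the points $p,q$ disappear), so $\gamma_1,\gamma_2$ cannot have been in minimal position.

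For the reverse direction, I would argue by contrapositive: assume $\gamma_1,\gamma_2$ are transverse but not in minimal position, and produce a bigon. The plan is to pass to the universal cover $\widetilde S$, which is $\R^2$ or $\H^2$ under the usual assumption that $S$ is not a sphere or torus (the sporadic small-complexity cases can be handled by direct inspection). Choose a pair $\gamma_1',\gamma_2'$ isotopic to $\gamma_1,\gamma_2$ that \emph{does} realize $i(\gamma_1,\gamma_2)$, and lift everything. The key fact to exploit is that in the universal cover any two lifts of essential simple closed curves intersect in at most one point, because they are topological lines that either fellow-travel the same geodesic or diverge towards distinct ends at infinity.

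Now compare a lift $\widetilde\gamma_1$ with all lifts of $\gamma_2$ and with all lifts of $\gamma_2'$. The set of intersections of $\widetilde\gamma_1$ with lifts of $\gamma_2$ has strictly more points than that with lifts of $\gamma_2'$, by hypothesis. Matching up these points via the isotopy from $\gamma_2$ to $\gamma_2'$, one finds two lifts $\widetilde\gamma_2,\widetilde\gamma_2^{\;\prime}$ that cross $\widetilde\gamma_1$ in $\widetilde S$ but whose isotopy-corresponding endpoints coincide on the circle at infinity; hence a pair of sub-arcs of $\widetilde\gamma_1$ and of a lift of $\gamma_2$ bounds an embedded disc in $\widetilde S$. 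Pick an \emph{innermost} such disc $\widetilde D$ (one containing no further arc of either lift in its interior). Then $\widetilde D$ projects injectively to $S$, because the deck transformations would otherwise produce a smaller such disc inside $\widetilde D$; the image is the desired bigon.

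The main obstacle is this last innermost-disc argument: one must verify both that an innermost $\widetilde D$ exists (a finiteness statement, following from the fact that only finitely many lifts of $\gamma_2$ can meet a compact sub-arc of $\widetilde\gamma_1$) and that the projection $\widetilde D\to S$ is an embedding rather than merely an immersion. The latter follows because any identification on the boundary of $\widetilde D$ would yield a covering translation fixing an interior point of either lift, contradicting the fact that the lifts are properly embedded lines acted on freely by the stabilizer of the underlying curve.
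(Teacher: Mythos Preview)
The paper does not supply its own proof of this proposition: it is quoted verbatim from Farb--Margalit with a terminal \qed, so there is no argument in the paper to compare yours against.

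Regarding your sketch on its own merits: the forward direction is correct as written. The reverse direction has the right architecture (pass to the universal cover, lifts are properly embedded lines, find an innermost bigon upstairs, project it down), but the counting step in the middle is not quite right. You write that ``the set of intersections of $\widetilde\gamma_1$ with lifts of $\gamma_2$ has strictly more points than that with lifts of $\gamma_2'$''; both of these sets are infinite, so this comparison is meaningless as stated. What you actually need is that intersection points of $\gamma_1\cap\gamma_2$ in $S$ correspond bijectively to $\langle\widetilde\gamma_1\rangle$-orbits of intersections of $\widetilde\gamma_1$ with the full preimage of $\gamma_2$. If every individual lift of $\gamma_2$ meets $\widetilde\gamma_1$ in at most one point, then this orbit count is determined by the linking of endpoints on $\partial\widetilde S$, which is an isotopy invariant and hence equals $i(\gamma_1,\gamma_2)$. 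Thus non-minimality forces some single lift $\widetilde\gamma_2$ to meet $\widetilde\gamma_1$ in at least two points, and between two consecutive such points the two sub-arcs bound a disc in $\widetilde S$. Your sentence about ``two lifts $\widetilde\gamma_2,\widetilde\gamma_2'$ whose isotopy-corresponding endpoints coincide on the circle at infinity'' does not express this and should be replaced by the argument just described.

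One further point: for the innermost step you must take an innermost bigon among \emph{all} bigons formed by any lift of $\gamma_1$ with any lift of $\gamma_2$, not just the one you first found; otherwise another lift could cut through the interior and spoil embeddedness of the projection.
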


\subsection{Levy, anti-Levy, Cantor, and anti-Cantor multicurves}
Consider a Thurston map $f\colon(S^2,A)\selfmap$. We construct the
following directed graph: its vertex set is the set of essential
simple closed curves on $S^2\setminus A$, namely the vertex set of the
curve complex $\mathcal C(S^2\setminus A)$. For every simple closed
curve $\gamma$ and for every component $\delta$ of $f^{-1}(\gamma)$,
we put an edge from $\gamma$ to $\delta$ labeled
$\deg(f\restrict\delta)$. Note that the operation $f^{-1}$ induces a
map on the simplices of $\mathcal C(S^2\setminus A)$, but not a
simplicial map.

\begin{figure}
  \begin{tikzpicture}[auto,vx/.style={circle,minimum size=1ex,inner sep=0pt,outer sep=2pt,draw,fill=gray!50}]
    \def\leftsphere#1#2{+(0,0.1) .. controls +(180:#1/2) and +(0:#1/2) .. +(-#1,#2)
      .. controls +(180:#2) and +(180:#2) .. +(-#1,-#2)
      .. controls +(0:#1/2) and +(180:#1/2) .. +(0,-0.1) ++(0,0)}
    \def\rightsphere#1#2{+(0,0.1) .. controls +(0:#1/2) and +(180:#1/2) .. +(#1,#2)
      .. controls +(0:#2) and +(0:#2) .. +(#1,-#2)
      .. controls +(180:#1/2) and +(0:#1/2) .. +(0,-0.1) ++(0,0)}
    \def\midsphere#1#2{+(0,0.1) .. controls +(0:#1/2) and +(180:#1/2) .. +(#1,#2)
      .. controls +(0:#1/2) and +(180:#1/2) .. +(#1*2,0.1)
      +(0,-0.1) .. controls +(0:#1/2) and +(180:#1/2) .. +(#1,-#2)
      .. controls +(0:#1/2) and +(180:#1/2) .. +(#1*2,-0.1) ++(#1*2,0)
    }
    
    \draw[very thick] (-2.4,0) node [xshift=-15mm] {$S_1$}
    \leftsphere{1.5}{0.7} node [below=1mm] {$v_1$} node[xshift=12mm] {$S_2$}
    \midsphere{1.2}{0.7} node[below=1mm] {$v_2$} node[xshift=12mm] {$S_3$}
    \midsphere{1.2}{0.7} node[below=1mm] {$v_3$} node[xshift=15mm] {$S_4$}
    \rightsphere{1.5}{0.7};

    \draw[very thick] (-3.4,2.5) node [xshift=-8mm] {$S_1$}
    \leftsphere{0.8}{0.7} node [xshift=3mm] {\small $S'_2$}
    \midsphere{0.3}{0.4} node [xshift=3mm] {\small $S'_3$}
    \midsphere{0.3}{0.4} node [xshift=8mm] {$S_2$} \midsphere{0.8}{0.7}
    node [xshift=3mm] {\small $S''_3$} \midsphere{0.3}{0.4} node
    [xshift=3mm] {\small $S'_4$} \midsphere{0.3}{0.4} node [xshift=8mm]
    {$S_3$} \midsphere{0.8}{0.7} node [xshift=3mm] {\small $S''_2$}
    \midsphere{0.3}{0.4} node [xshift=3mm] {\small $S'''_3$}
    \midsphere{0.3}{0.4} node [xshift=8mm] {$S_4$} \rightsphere{0.8}{0.7};

    \draw (-3.4,2.4) -- (-2.4,0.1) \fwdarrowonline{0.5};
    \draw (-2.8,2.4) -- (-0.25,0.15) \fwdarrowonline{0.5};
    \draw (-2.2,2.4) -- (-0.1,0.1) \fwdarrowonline{0.5};
    \draw (-0.6,2.4) -- (0.0,0.1) \fwdarrowonline{0.667};
    \draw (0.0,2.4) -- (2.3,0.1) \fwdarrowonline{0.667};
    \draw (0.6,2.4) -- (2.4,0.1) \fwdarrowonline{0.667};
    \draw (2.2,2.4) -- (0.1,0.1) \bckarrowonline{0.667};
    \draw (2.8,2.4) -- (0.25,0.15) \bckarrowonline{0.333};
    \draw (3.4,2.4) -- (2.5,0.1) \bckarrowonline{0.5};

    \node[vx,label={$v_1$}] (v1) at (-2.5,-1.5) {};
    \node[vx,label={$v_2$}] (v2) at (0,-1.5) {};
    \node[vx,label={$v_3$}] (v3) at (2.5,-1.5) {};
    \draw[<-] (v1)  to [loop left] (v1);
    \draw[<-] (v1)  to [bend left=20] (v2);
    \draw[<-] (v1)  to [bend right=20] (v2);
    \draw[->] (v2)  to [loop below] (v2);
    \draw[->] (v2)  to [bend right=15] (v3);
    \draw[->] (v2)  to [bend right=45] (v3);
    \draw[->] (v3)  to [loop right] (v3);
    \draw[->] (v3)  to [bend right=15] (v2);
    \draw[->] (v3)  to [bend right=45] (v2);
  \end{tikzpicture}
  \caption{A bicycle $\{v_2,v_3\}$ generates a Cantor multicurve
    $\{v_1,v_2,v_3\}$. The action of the map $f$ is indicated on the
    preimages of $\{v_1,v_2,v_3\}$. If annuli are mapped by degree
    $1$, then it is also a Levy cycle. Trivial spheres are omitted on
    the top sphere. The graph below is the corresponding portion of
    the graph on $\mathcal C(S^2\setminus
    A)$.} \label{Fig:ExampleCantorMult}
\end{figure}

A multicurve $\CC\in\mathcal C(S^2\setminus A)$ is \emph{invariant}
if $f^{-1}(\CC)=\CC$. Given a multicurve $\CC_0$ with
$\CC_0\subseteq f^{-1}(\CC_0)$, there is a unique invariant multicurve
$\CC$ \emph{generated} by $\CC_0$, namely the intersection of all
invariant multicurves containing $\CC_0$. The invariant multicurve
$\CC$ may readily be computed by considering
$\CC_0,f^{-1}(\CC_0),f^{-2}(\CC_0),\dots$; this is an ascending
sequence of multicurves, and each multicurve contains at most $\#A-3$
curves so the sequence must stabilize.

Let $\CC$ be an invariant multicurve, and consider the corresponding
directed subgraph of $\mathcal C(S^2\setminus A)$. A \emph{strongly
  connected component} is a maximal subgraph spanned by a subset
$C\subseteq\CC$ such that, for every $\gamma,\delta\in C$, there
exists a non-trivial path from $\gamma$ to $\delta$ in $C$. Note that
singletons with no loop are never strongly connected components.

Strongly connected components are partially ordered: $C\prec D$ if
there is a path from a curve in $C$ to a curve in $D$. Consider a
strongly connected component $C$. We call $C$ \emph{primitive in
  $\CC$} if it is minimal for $\prec$. We call $C$ a \emph{bicycle} if
for every $\gamma,\delta\in C$ there exists $n\in\N$ such that at
least two paths of length $n$ join $\gamma$ to $\delta$ in $C$, and a
\emph{unicycle} otherwise; see Figure~\ref{Fig:ExampleCantorMult} for
an illustration.

We remark that bicycles contain at least two cycles, so the number of
paths of length $n$ grows exponentially in $n$. On the other hand,
every unicycle is an actual \emph{periodic cycle}, namely can be
written as $C=(\gamma_0,\gamma_1,\dots,\gamma_n=\gamma_0)$ in such a
manner that $\gamma_{i+1}$ has an $f$-preimage $\gamma'_i$ isotopic to
$\gamma_i$. If in a periodic cycle $C$ the $\gamma'_i$ may be chosen
so that $f$ maps each $\gamma'_i$ to $\gamma_{i+1}$ by degree $1$,
then $C$ is called a \emph{Levy cycle}.

A periodic cycle $C=(\gamma_0,\gamma_1,\dots,\gamma_n=\gamma_0)$ is a
\emph{solid periodic cycle} if $f$ maps $\gamma_i$ onto $\gamma_{i+1}$
for all $i=0,\dots,n-1$; if $f$ maps every $\gamma_i$ to
$\gamma_{i+1}$ by degree $1$, then $C$ is called a \emph{solid Levy
  cycle}. Since the critical values of $f$ are assumed to belong to
$A$, the restrictions
$f\restrict{\gamma_i}\colon\gamma_i\to\gamma_{i+1}$ are all
homeomorphisms. Note that a periodic cycle may be isotopic to more
than one solid periodic cycle, possibly some solid Levy and some solid
non-Levy cycles.

\[\begin{tikzpicture}[->,auto,vx/.style={circle,minimum size=1ex,inner sep=0pt,outer sep=2pt,draw,fill=gray!50}]
  \node[vx] (1) at (-1,0) {};
  \node[vx] (2) at (1,0) {};
  \draw (1)  to [bend left=60] (2);
  \draw (1)  to [bend left=10] (2);
  \draw (2)  to [bend left] (1);
  \node at (0,-0.6) {bicycle};
\end{tikzpicture}
\qquad
\begin{tikzpicture}[->,auto,vx/.style={circle,minimum size=1ex,inner sep=0pt,outer sep=2pt,draw,fill=gray!50}]
  \node[vx] (1) at (-1,0) {};
  \node[vx] (2) at (0,1.5) {};
  \node[vx] (3) at (1,0) {};
  \draw (1)  to [bend left=10] node {$1:1$} (2);
  \draw (2)  to [bend left=10] node {$1:1$} (3);
  \draw (3)  to [bend left=10] node [above] {$1:1$} (1);
  \node at (0,-0.6) {Levy cycle};
\end{tikzpicture}
\qquad
\begin{tikzpicture}[->,auto,vx/.style={circle,minimum size=1ex,inner sep=0pt,outer sep=2pt,draw,fill=gray!50}]
  \node[vx] (1) at (-1,0) {};
  \node[vx] (2) at (-1,1) {};
  \node[vx] (3) at (1,0) {};
  \node[vx] (4) at (1,1) {};
  \draw[dashed,thick] (-1,0.5) ellipse (6mm and 9mm) node[above right=7mm] {$C$};
  \draw (1)  to [bend left] (2);
  \draw (2)  to [bend left] (1);
  \draw (1)  to (3);
  \draw (3)  to [bend left] (4);
  \draw (4)  to [bend left] (3);
  \node at (0,-0.6) {Primitive s.c.c.};
\end{tikzpicture}
\]

We remark that every invariant multicurve is generated by its
primitive unicycles and bicycles, and that if $C$ is a strongly
connected component of an invariant multicurve $\CC$ and $C$ has a
curve in common with an invariant multicurve $\DD$ then $C$ is also a
strongly connected component in $\DD$; and it is a bicycle in $\CC$ if
and only if it is a bicycle in $\DD$. However, $C$ could be primitive
in $\CC$ but not in $\DD$.

We will sometimes speak of a strongly connected component without
reference to an invaraint multicurve containing it. We will also say
that a strongly connected component $C$ is \emph{primitive} if it is
primitive in any invariant multicurve containing $C$.

\begin{defn}[Types of invariant multicurves]
  Let $\CC$ be an invariant multicurve. Then $\CC$ is
  \begin{idescription}
  \item[Cantor] if it is generated by its bicycles;
  \item[anti-Cantor] if $\CC$ does not contain any bicycle;
  \item[Levy] if it is generated by its Levy cycles;
  \item[anti-Levy] if $\CC$ does not contain any Levy cycle.\qedhere
  \end{idescription}
\end{defn}

\begin{prop}\label{Prop:CantorMultCurv}
  Suppose $f\colon(S^2, A)\selfmap$ is a Thurston map with an
  invariant multicurve $\CC$. Then
  \begin{enumerate}
  \item there is a unique maximal invariant Cantor sub-multicurve
    $\CC_{\text{Cantor}}\subseteq\CC$ such that the restrictions of
    $\CC$ to pieces in $S^2\setminus\CC_{\text{Cantor}}$ are
    anti-Cantor invariant multicurves of return maps in
    $R(f,A,\CC_{\text{Cantor}})$;
  \item there is a unique maximal invariant Levy sub-multicurve
    $\CC_{\text{Levy}}\subseteq\CC$ such that the restrictions of
    $\CC$ to pieces in $S^2\setminus\CC_{\text{Levy}}$ are anti-Levy
    invariant multicurves of return maps in
    $R(f,A,\CC_{\text{Levy}})$.
\end{enumerate}
\end{prop}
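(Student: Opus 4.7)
My plan is to treat both parts by the same construction. Let $\mathcal{B}\subseteq\CC$ be the union of all curves lying in some bicycle of $\CC$, and let $\mathcal{L}\subseteq\CC$ be the union of all curves lying in some Levy cycle. Define $\CC_{\text{Cantor}}$ (resp.\ $\CC_{\text{Levy}}$) as the invariant sub-multicurve of $\CC$ generated by $\mathcal{B}$ (resp.\ by $\mathcal{L}$). The observation that makes this legitimate is that if $\gamma$ belongs to a strongly connected component $C$ then $\gamma$ has an in-edge from $C$ in the directed graph, i.e.\ $\gamma$ is (isotopic to) an essential $f$-preimage of some curve in $C$; hence $\mathcal{B}\subseteq f^{-1}(\mathcal{B})\cap\CC$ and $\mathcal{L}\subseteq f^{-1}(\mathcal{L})\cap\CC$. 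Since $\CC$ is finite, the ascending chains $\mathcal{B}\subseteq f^{-1}(\mathcal{B})\subseteq\cdots$ and $\mathcal{L}\subseteq f^{-1}(\mathcal{L})\subseteq\cdots$ stabilize and produce the desired invariant sub-multicurves of $\CC$.

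By the remark already noted before the proposition, a strongly connected component of $\CC$ meeting an invariant sub-multicurve is contained in it and persists there as a strongly connected component, with the bicycle/unicycle (and Levy) designation unchanged. Consequently the bicycles of $\CC_{\text{Cantor}}$ are exactly the bicycles of $\CC$, and by construction $\CC_{\text{Cantor}}$ is generated by them, so it is Cantor. For maximality, if $\CC'\subseteq\CC$ is any invariant Cantor sub-multicurve, its bicycles are bicycles of $\CC$, so their curves lie in $\mathcal{B}\subseteq\CC_{\text{Cantor}}$; the generating property of $\CC'$ then yields $\CC'\subseteq\CC_{\text{Cantor}}$. Uniqueness is immediate, and the Levy case is structurally identical.

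It remains to verify the anti-Cantor / anti-Levy property for the restriction $\DD$ of $\CC\setminus\CC_{\text{Cantor}}$ (resp.\ $\CC\setminus\CC_{\text{Levy}}$) to the pieces in the corresponding complement, viewed as an invariant multicurve for the return maps in $R(f,A,\CC_{\text{Cantor}})$ (resp.\ $R(f,A,\CC_{\text{Levy}})$). Suppose $\DD$ contained a bicycle $C$ on some piece. The two inequivalent directed paths between a pair of its curves, unfolded through the decomposition into iterates of $f$, give two inequivalent directed paths in the ambient graph of $\CC$ for $f$; all intermediate curves stay in $\CC$ by invariance. This exhibits an ambient bicycle in $\CC$ whose curves must lie in $\mathcal{B}\subseteq\CC_{\text{Cantor}}$, contradicting their position off $\CC_{\text{Cantor}}$.

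The main obstacle I anticipate is the Levy analogue of this unfolding step. A Levy cycle for a return map $g$ has its constituent $g$-preimages of degree $1$, but $g$ is a composition of restrictions of iterates of $f$ across several pieces, so one must rule out intermediate $f$-lifts of higher degree whose product is still $1$. Multiplicativity of covering degrees closes this gap: the $g$-degree along a curve equals the product of the $f$-degrees of the intermediate lifts, so $g$-degree $1$ forces $f$-degree $1$ at every intermediate step. With this observation the unfolded cycle is a genuine Levy cycle for $f$, and the Levy case closes exactly as the Cantor case.
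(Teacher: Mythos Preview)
Your proof is correct and follows exactly the same construction as the paper: define $\CC_{\text{Cantor}}$ (resp.\ $\CC_{\text{Levy}}$) as the sub-multicurve of $\CC$ generated by all bicycles (resp.\ all Levy cycles) of $\CC$. The paper's proof is a single sentence stating this construction, leaving the verifications implicit; you have supplied the details, including the multiplicativity-of-degrees observation needed for the Levy case, and they are sound.
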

\begin{proof}
  The multicurve $\CC_{\text{Cantor}}$ is generated by all the
  bicycles in $\CC$ while the multicurve $\CC_{\text{Levy}}$ is
  generated by all the Levy cycles in $\CC$.
\end{proof}

\subsection{Crossings of Levy cycles}
We now show that Levy cycles cross invariant multicurves in a quite
restricted way. First we need the following technical properties.

\begin{prop}\label{prop:solidPerCycl}
  Let $f\colon(S^2,A)\selfmap$ be a Thurston map. Then
  \begin{enumerate}
  \item if $C$ is a periodic cycle, then there is a homeomorphism
    $h\colon(S^2,A)\selfmap$ isotopic to the identity rel $A$ such
    that $C$ is a solid periodic cycle of the map $h\circ f$, and is
    Levy for $h\circ f$ if it was Levy for
    $f$;\label{prop:solidPerCycl:1}
  \item if a periodic cycle $C$ crosses a Levy cycle, then $C$ is a
    periodic primitive unicycle. A strictly preperiodic curve does not
    cross a Levy cycle;\label{prop:solidPerCycl:2}
  \item if $L$ is a Levy cycle and $C$ is a periodic cycle crossing
    $L$ such that $C$ and $L$ are in minimal position, then there is
    homeomorphism $h\colon(S^2,A)\selfmap$ isotopic to the identity
    rel $A$ such that $C$ and $L$ are solid curve cycles of the map
    $h\circ f$.\label{prop:solidPerCycl:3}
  \end{enumerate}
\end{prop}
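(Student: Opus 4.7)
\noindent\emph{Proof plan.} For~(1), the plan is to exploit the defining property of a periodic cycle: each $\gamma_{i+1}$ admits a preimage $\gamma'_i$ isotopic to $\gamma_i$ rel $A$. The $\gamma'_i$, being components of the preimage multicurve, are pairwise disjoint and together form a labeled multicurve isotopic class-by-class to $(\gamma_0,\dots,\gamma_{n-1})$. I would first choose the representative of $[\gamma_i]$ to be $\gamma'_i$ for all $i$; under this choice, $f(\gamma_i)$ equals the original $\gamma_{i+1,\mathrm{orig}}$, which forms a disjoint multicurve in the class $[\gamma_{i+1}]$. The multicurve isotopy extension theorem then gives a homeomorphism $h$ isotopic to the identity rel $A$ with $h(\gamma_{i+1,\mathrm{orig}})=\gamma_{i+1}$ simultaneously for all $i$, so that $(h\circ f)(\gamma_i)=\gamma_{i+1}$ realizes $C$ as a solid cycle of $h\circ f$. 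The Levy property passes to $h\circ f$ because post-composition by the homeomorphism $h$ preserves the local degrees $\deg(f\restrict{\gamma'_i})$.

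For~(2), primitivity of a periodic cycle is automatic: every curve in $C$ iterates forward into $C$, so no other strongly connected component $C'$ can satisfy $C'\prec C$. The unicycle claim rests on a budget argument. Set $N\coloneqq i(\gamma_k,\ell_j)>0$ for some crossing pair; the degree-$1$ preimage $\ell'_{j-1}\sim\ell_{j-1}$ of $\ell_j$ satisfies $\#(\ell'_{j-1}\cap f^{-1}(\gamma_k))=\#(\ell_j\cap\gamma_k)=N$ in minimal position, and each component of $f^{-1}(\gamma_k)$ isotopic to $\gamma_{k-1}$ contributes at least $i(\gamma_{k-1},\ell_{j-1})$ intersections to this count. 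Iterating around both cycles yields a non-increasing, $\operatorname{lcm}(n,m)$-periodic, and hence constant sequence $i(\gamma_{k-s},\ell_{j-s})=N$, and the budget $rN\le N$ forces $r=1$. For the preperiodic statement, I would induct on the preperiod~$N$ of $\gamma$. Crossings propagate forward: if $f(\gamma)$ could be isotoped off $\ell_{j+1}$ rel~$A$, lifting the isotopy through the covering $f\colon S^2\setminus f^{-1}(A)\to S^2\setminus A$ would push $\gamma$ off $\ell'_j\sim\ell_j$, contradicting $i(\gamma,\ell_j)>0$. In the base case $N=1$, $\delta\coloneqq f(\gamma)$ is periodic and (by propagation) crosses $L$, hence lies in a primitive unicycle $C$ by the first assertion; the budget argument forces the unique isotopic preimage $\delta_{-1}\in C$ to absorb the entire budget $N$, so any other preimage of $\delta$ crossing $L$ is impossible, and $\gamma$ must equal $\delta_{-1}$---making $\gamma$ periodic, a contradiction. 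The inductive step combines propagation with the inductive hypothesis applied to $f(\gamma)$, which has preperiod $N-1$.

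For~(3), the crucial observation is that in minimal position the unique preimages $\gamma'_i\sim\gamma_i$ and $\ell'_j\sim\ell_j$ from~(2) automatically intersect minimally: $\#(\gamma'_i\cap\ell'_j)\le N$ follows from the degree-$1$ structure of $f\restrict{\ell'_j}$, while $\#(\gamma'_i\cap\ell'_j)\ge i(\gamma'_i,\ell'_j)=i(\gamma_i,\ell_j)=N$, forcing equality. The two transverse systems $(\{\gamma'_i\},\{\ell'_j\})$ and $(\{\gamma_i\},\{\ell_j\})$ are then minimal-position representatives of isotopic pairs of multicurves on $(S^2,A)$. A joint isotopy extension rel $A$---first matching $\{\gamma'_i\}$ to $\{\gamma_i\}$ as in~(1), then matching $\{\ell'_j\}$ to $\{\ell_j\}$ using the agreement of their intersection patterns with $\{\gamma_i\}$---produces the required homeomorphism $h$. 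I expect the main technical obstacle to be precisely this joint extension: controlling an ambient isotopy on two transverse multicurves simultaneously. The combinatorial identity of the two intersection patterns, established by the minimal-position equality, enables this step via the change-of-coordinates principle for curves on surfaces.
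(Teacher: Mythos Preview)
Your proposal is correct and follows essentially the same strategy as the paper. A few organizational differences are worth noting.

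For part~(2), the paper first makes $L$ \emph{solid} via part~(1) and then counts $\#(f^{-m}(\gamma_0)\cap L)=\#(\gamma_0\cap L)$ for all $m$; since the component $\gamma'_0\sim\gamma_0$ already contributes at least $\#(\gamma_0\cap L)$ intersections, every other component is genuinely disjoint from $L$ and hence has intersection number zero. Your version, tracking the degree-$1$ lift $\ell'_{j-1}$ instead of a solid $L$, is equivalent but slightly less economical: making $L$ solid removes the need to argue that preimage curves inherit minimal position. The paper's treatment of the preperiodic case is a one-line remark (crossing propagates to the image and the unique crossing preimage is the periodic one), which your inductive write-up spells out in full; both are the same argument.

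For part~(3), the paper again makes $L$ solid first, so that only the preimage system $C'=(\gamma'_i)$ has to be moved onto $C=(\gamma_i)$ while \emph{fixing} $L$. This reduces your two-step joint extension to a one-sided problem, and the paper dispatches it by a direct appeal to the Alexander method (Farb--Margalit, Proposition~2.8): the systems $\{\gamma_i\}\cup\{\lambda_j\}$ and $\{\gamma'_i\}\cup\{\lambda_j\}$ are each pairwise in minimal position, every triple contains a disjoint pair, and corresponding curves are isotopic---so a single ambient isotopy carries one system to the other. What you call ``the main technical obstacle'' is exactly this lemma; citing it would tighten your argument and avoid the delicate second step of moving $\{\ell'_j\}$ onto $\{\ell_j\}$ relative to $\{\gamma_i\}$, where matching intersection \emph{numbers} alone is not obviously sufficient.
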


We remark that the last statement can not be improved much. Indeed,
there is an example, due to Wittner~\cite{wittner:phd}, of the mating
of the airplane and rabbit polynomials, which may be decomposed in two
manners as a mating; in other words, the map admits two ``equators''
(invariant curves mapped $d:1$ to themselves). It is impossible to
make both equators simultaneously solidly periodic and in minimal
position; worse, if they are both made solidly periodic, then they
must have infinitely many crossings. We recall the following
\begin{lem}[The Alexander method, \cite{farb-margalit:mcg}*{Proposition~2.8}]\label{lem:alexander}
  A collection of pairwise non-isotopic essential curves
  $\{\gamma_i\}_i$ can be simultaneously isotopically moved into
  $\{\gamma'_i\}_i$ if (1) all curves in $\{\gamma_i\}_i$ are pairwise
  in minimal position, (2) all curves in $\{\gamma'_i\}_i$ are
  pairwise in minimal position, (3) every $\gamma_i$ is isotopic to
  the corresponding $\gamma'_i$, and (4) for pairwise different
  $i,j,k$ at least one of $i(\gamma_i,\gamma_j)$,
  $i(\gamma_i,\gamma_k)$ and $i(\gamma_j,\gamma_k)$ is $0$.\qed
\end{lem}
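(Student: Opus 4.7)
The plan is to prove the Alexander method by induction on the number of curves $n$, extending the ambient isotopy one curve at a time. The base cases are the classical single-curve and two-curve statements. For $n=1$, any two isotopic essential simple closed curves in a surface are related by an ambient isotopy (a standard consequence of the classification of surfaces together with the fact that a regular neighborhood of a simple closed curve is an annulus). For $n=2$, I would invoke the standard ``two-curves'' Alexander lemma: given two pairs in minimal position with $\gamma_i$ isotopic to $\gamma_i'$, one uses the bigon criterion (Proposition~\ref{prop:BigonCriterion}) to identify, for each pair of corresponding arcs of $\gamma_2\setminus\gamma_1$ and $\gamma_2'\setminus\gamma_1'$ between matched intersection points with $\gamma_1=\gamma_1'$, an embedded disc across which one arc is homotopied to the other. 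These disc homotopies are then globalized to an ambient isotopy.

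For the inductive step, assume the statement for collections of size $<n$. Applying the inductive hypothesis to $\{\gamma_1,\dots,\gamma_{n-1}\}$ and $\{\gamma_1',\dots,\gamma_{n-1}'\}$ (conditions (1)--(4) are inherited), I obtain an ambient isotopy $\Phi$ with $\Phi(\gamma_i)=\gamma_i'$ for $i<n$. Set $\delta\coloneqq\Phi(\gamma_n)$; it remains to produce an ambient isotopy, fixing each $\gamma_i'$ setwise for $i<n$, that carries $\delta$ onto $\gamma_n'$. The decisive use of condition~(4) is that the set $I\coloneqq\{i<n\mid i(\gamma_n,\gamma_i')>0\}$ indexes pairwise disjoint curves: if $i,j\in I$ then the triple $\gamma_n,\gamma_i',\gamma_j'$ has two pairwise crossings, so by~(4) the third intersection number $i(\gamma_i',\gamma_j')$ vanishes. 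Hence $\Gamma\coloneqq\bigcup_{i\in I}\gamma_i'$ is itself a multicurve, and since $\delta$ and $\gamma_n'$ are isotopic to $\gamma_n$ but already pairwise in minimal position with each $\gamma_i'$, both curves are in minimal position with $\Gamma$ (intersection numbers are additive over disjoint unions).

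Cut the surface along $\Gamma$: the curves $\delta$ and $\gamma_n'$ become collections of properly embedded arcs (with matching endpoints on the boundary, since the intersection numbers with each $\gamma_i'$ agree). By the bigon criterion, minimal position in the surface translates to the absence of bigons between $\delta$ and $\partial$ in each cut piece, and similarly for $\gamma_n'$. A standard arc version of the bigon/innermost-disc argument then shows that each arc of $\delta$ is isotopic, rel endpoints and through properly embedded arcs, to the corresponding arc of $\gamma_n'$ in the cut piece containing it; summing the disc isotopies gives an ambient isotopy of the cut surface, fixing the boundary, taking $\delta$ to $\gamma_n'$. Gluing back along $\Gamma$ yields an ambient isotopy of $S^2$ preserving each $\gamma_i'$ ($i\in I$) setwise. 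For $j<n$ with $j\notin I$, the curve $\gamma_j'$ is disjoint from both $\delta$ and $\gamma_n'$ and from $\Gamma$ wherever the isotopy is supported with nontrivial motion, so a further small perturbation (or working from the outset in a neighbourhood disjoint from $\bigcup_{j\notin I}\gamma_j'$) leaves these curves fixed as well.

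The main obstacle I expect is the gluing step: after isotoping the arcs in each cut piece, one must produce a \emph{single} ambient isotopy of the cut surface that restricts to each arc isotopy and fixes $\partial$ pointwise, so that it descends to an ambient isotopy of $S^2$ preserving $\Gamma$. This requires care because the discs guiding the individual arc isotopies may overlap, so one orders the arcs by an innermost-disc argument and proceeds one disc at a time, shrinking the support so that the later isotopies do not disturb already-fixed arcs. Condition~(4) is precisely what prevents a three-way entanglement that would defeat this ordering.
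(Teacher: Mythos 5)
The paper does not prove this lemma; the \verb|\qed| at the end of the statement signals that it is quoted verbatim from Farb--Margalit (Proposition~2.8) and used as a black box. So there is no in-paper proof to compare against, and I will simply assess your argument on its merits. Your outer scaffolding (induction on the number of curves, using condition~(4) to show that the curves crossing $\gamma_n$ are pairwise disjoint, cutting along that sub-multicurve $\Gamma$) is in fact the same scaffolding Farb--Margalit use.

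The gap is in the inductive step, at the sentence claiming that after cutting along $\Gamma$ the arcs of $\delta$ and of $\gamma_n'$ have ``matching endpoints on the boundary, since the intersection numbers with each $\gamma_i'$ agree,'' and that ``a standard arc version of the bigon/innermost-disc argument then shows that each arc of $\delta$ is isotopic, rel endpoints, \dots\ to the corresponding arc of $\gamma_n'$.'' Equal intersection number with each $\gamma_i'$ only gives an equal \emph{count} of endpoints; it does not place them at the same points of $\Gamma$, does not make the cyclic orders along each $\gamma_i'$ agree, and---most seriously---does not guarantee that the arcs realize the same pairing of boundary points. A priori there is no ``corresponding arc.'' What you actually need here is that cutting a curve isotopic to $\gamma_n$ along $\Gamma$, subject to minimal position, determines the arc system up to a homeomorphism of the cut pieces; this is the \emph{change of coordinates principle} applied to arcs in the cut surface, not a bigon/innermost-disc argument. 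Bigon arguments give you minimal position; they do not produce an isotopy between two arcs that merely share endpoints, and they certainly do not tell you that two arc systems in a cut surface are combinatorially the same. Without that step you cannot even begin the disc-by-disc gluing you flag as ``the main obstacle''; the obstacle arises one step earlier. On an orbisphere (the setting of this paper, where cut pieces have small mapping class groups rel boundary) the arc change-of-coordinates step is manageable and the ambient isotopy does exist, but it is a substantive input and must be named and invoked, not folded into a bigon argument.

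There is also a quieter circularity risk in the way you would want to justify that the arc systems match: the natural justification is ``$\delta$ and $\gamma_n'$ are isotopic in the full surface,'' but converting that global isotopy into compatible arc isotopies in the cut pieces is precisely the content of what you are proving, so it cannot be invoked for free. The correct ordering of ideas is: use change of coordinates on the cut surface to produce a homeomorphism taking one arc system to the other and fixing $\partial$, then argue (separately, using the low complexity of the pieces) that this homeomorphism is isotopic to the identity rel $\partial$, and only then glue.
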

\begin{proof}[Proof of Proposition~\ref{prop:solidPerCycl}]
  We begin by~\eqref{prop:solidPerCycl:1}. Write
  $C=(\gamma_0,\gamma_1,\dots,\gamma_n=\gamma_0)$. For every $i$
  choose a component $\gamma'_i$ of $f^{-1}(\gamma_{i+1})$ that is
  isotopic to $\gamma_i$, mapping by degree $1$ if $C$ is a Levy
  cycle. Note that the $\gamma'_i$ are disjoint. Any isotopy moving
  all $\gamma_i$ to $\gamma'_i$ satisfies the claim.

  Let us move to the second claim. Assume that
  $C=(\gamma_0,\gamma_1,\dots,\gamma_n=\gamma_0)$ crosses a Levy cycle
  $L$. By Part~\eqref{prop:solidPerCycl:1} we may assume that $L$
  is a solid Levy cycle.

  Put $\gamma_0$ in minimal position with respect to $L$ and denote
  by $\#(\gamma_0\cap L)$ the total number of crossings of
  $\gamma_0$ with $L$. Since $L$ is a solid Levy cycle we have
  \[\#(f^{-m}(\gamma_0)\cap L)=\#(\gamma_0\cap L)
  \]
  for every $m\ge 0$. If $m$ is a multiple of $n$, then
  $f^{-m}(\gamma_0)$ contains at least one component $\gamma'_0$
  isotopic to $\gamma_0$. By minimality,
  \[\#(\gamma'_0\cap L)\ge \#(\gamma_0\cap L).
  \]
  We conclude that for every $m\ge0$ there is exactly one component in
  $f^{-m}(\gamma_0)$ that crosses $L$. This component is necessarily
  isotopic to $\gamma_{-m}$, subscripts computed modulo
  $n$. Claim~\eqref{prop:solidPerCycl:2} follows from the observation
  that if $\gamma$ crosses a Levy cycle $L$, is periodic and is a
  preimage of some $\gamma'$, then $\gamma'$ crosses $L$.

  Let us prove Claim~\eqref{prop:solidPerCycl:3}. Write
  $L=(\lambda_0,\dots,\lambda_p=\lambda_0)$ and
  $C=(\gamma_0,\dots, \gamma_q=\gamma_0)$. By
  Part~\eqref{prop:solidPerCycl:1} we may assume that $L$ is a solid
  Levy cycle. By Part~\eqref{prop:solidPerCycl:2}, there is a unique
  component $\gamma'_i$ of $f^{-1}(\gamma_{i+1})$ that is isotopic to
  $\gamma_i$.  It follows from the above discussion that
  \[C'=(\gamma'_0,\gamma'_1,\dots, \gamma'_q)
  \]
  is also in minimal position with respect to $C$. It follows from the
  Alexander method, Lemma~\ref{lem:alexander}, that there is an
  isotopy moving every $\gamma'_i$ into $\gamma_i$ while fixing every
  $\lambda_i$.
\end{proof}

Let $f\colon(S^2,A)\selfmap$ be a Thurston map, and let $\CC$ be an
invariant multicurve. The components of $S^2\setminus\CC$ can be
compactified to \emph{small spheres} by shrinking each boundary
component to a point, and $f$ induces \emph{small maps} between the
small spheres, well defined up to isotopy. A periodic small sphere
$S_0$ gives rise to a \emph{small Thurston cycle} of maps
$S_0\to S_1\to\cdots\to S_0$ (see~\cite{bartholdi-dudko:bc2}*{Definition~\ref{bc2:lem:SmallMaps}}), which is a \emph{small homeomorphism
  cycle} if all the small maps are homeomorphisms.

The next result states that two Levy cycles can be joined so as to
give a finer decomposition, with additional homeomorphism small
maps. Its content is non-trivial only if the Levy cycles intersect.
\begin{cor}\label{cor:IntOfLevyCycl}
  Let $C_1$ and $C_2$ be two Levy cycles. Then a small neighbourhood
  of their union is a small homeomorphism cycle.

  More precisely, assume that $C_1$ and $C_2$ are in minimal
  position. Denote by $\CC$ the invariant multicurve generated by the
  boundary of a small neighbourhood of $C_1\cup C_2$ in $S^2$. Then
  the small spheres of $(S^2,A)\setminus\CC$ that intersect
  $C_1\cup C_2$ form a small homeomorphism cycle.
\end{cor}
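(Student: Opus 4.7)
My plan has two parts: reduce to the situation where $C_1$ and $C_2$ are simultaneously solid Levy cycles of $f$, and then realise the desired small homeomorphism cycle as the small-sphere decomposition coming from a thin tubular neighbourhood of the embedded graph $\Gamma\coloneqq C_1\cup C_2$.

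For the reduction, Proposition~\ref{prop:solidPerCycl}\eqref{prop:solidPerCycl:3}, applied with $L=C_1$ and $C=C_2$, produces a homeomorphism $h\colon(S^2,A)\selfmap$ isotopic to the identity that makes both $C_1$ and $C_2$ solid cycles of $h\circ f$. Inspecting that proof and invoking the Levy-preservation clause of Part~\eqref{prop:solidPerCycl:1}, the preimages $\gamma'_i$ may simultaneously be taken to be the degree-$1$ Levy lifts, so in fact both cycles become solid \emph{Levy} cycles. Since the conclusion of the corollary is an isotopy invariant of $f$, I henceforth assume that $C_1$ and $C_2$ are solid Levy cycles of $f$ itself.

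I now view $\Gamma$ as a finite graph embedded in $S^2\setminus A$, with vertices the transverse crossings of $C_1$ and $C_2$ and edges the resulting arcs. The solid Levy hypothesis says that $f$ maps each curve of $\Gamma$ homeomorphically onto its successor in the appropriate cycle, so $f|_\Gamma\colon\Gamma\to\Gamma$ is a local homeomorphism that sends vertices to vertices and edges to edges. I choose a regular neighbourhood $N\subseteq S^2\setminus A$ of $\Gamma$ thin enough that (a) $N$ is disjoint from every component of $f^{-1}(\Gamma)$ not already contained in $\Gamma$, and (b) the union $N'$ of components of $f^{-1}(N)$ meeting $\Gamma$ is again a regular neighbourhood of $\Gamma$, isotopic to $N$. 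Condition (a), combined with the fact that each curve of a solid Levy cycle has a unique cycle-preimage, forces $f\colon N'\to N$ to be a componentwise covering of degree $1$, i.e.\ a disjoint union of homeomorphisms.

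Finally, let $\CC$ be the invariant multicurve generated by the essential, non-peripheral components of $\partial N$; invariance is automatic since $\partial N'\subseteq f^{-1}(\partial N)$ and $\partial N'$ is isotopic to $\partial N$. The small spheres of $(S^2,A)\setminus\CC$ meeting $\Gamma$ are precisely the compactified connected components of $N$, and their induced small maps are the homeomorphisms coming from the componentwise homeomorphism $f\colon N'\to N$. Periodicity of the curves in $C_1$ and $C_2$ puts each such small sphere in a cyclic $f$-orbit, yielding the asserted small homeomorphism cycle; the first, informal assertion of the corollary then follows by taking the neighbourhood to be $N$. The principal obstacle is the promotion from solid to solid \emph{Levy} in Step 1 (the stated form of Proposition~\ref{prop:solidPerCycl}\eqref{prop:solidPerCycl:3} gives only solid cycles), and the componentwise degree-$1$ claim at the vertices of $\Gamma$, which requires minimal position of $C_1$ and $C_2$ to ensure no two branches of $\Gamma$ are identified by $f$.
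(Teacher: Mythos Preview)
Your approach is essentially the paper's: reduce to simultaneously solid Levy cycles, take a regular neighbourhood $N$ of $\Gamma=C_1\cup C_2$, and argue that $f$ restricts to a componentwise homeomorphism on it. Your concerns about the promotion to solid \emph{Levy} cycles and about degree~$1$ at the vertices of $\Gamma$ are legitimate but manageable --- the latter follows cleanly from the counting argument in Proposition~\ref{prop:solidPerCycl}\eqref{prop:solidPerCycl:2}, which shows that the components of $f^{-1}(C_2)\setminus C_2$ are actually \emph{disjoint} from $C_1$ (not merely of intersection number~$0$), so $f^{-1}(\Gamma)=\Gamma\sqcup\Gamma'$ with $\Gamma'\cap\Gamma=\emptyset$ and hence $f|_\Gamma$ is injective.

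The genuine gap is elsewhere. You assert that ``the small spheres of $(S^2,A)\setminus\CC$ meeting $\Gamma$ are precisely the compactified connected components of $N$'', but this is false whenever $\partial N$ contains peripheral curves --- and it typically does (a complementary bigon of $\Gamma$ containing a single point of $A$ gives one). Such curves are discarded when forming $\CC$, so the small sphere in question is $N_0\cup D$, the component $N_0$ of $N$ together with the adjacent once-marked disc $D$, and you must still show that the induced small map has degree~$1$ on this enlarged piece. The paper devotes its last two paragraphs to exactly this: it argues that if $f$ had degree $\ge 2$ on such a disc $D$, then $D$ would contain at least two critical values and hence be essential, a contradiction. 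You also do not invoke the Bigon criterion to rule out \emph{trivial} curves in $\partial N$; this is needed to ensure the essential part of $\partial N$ actually bounds the region containing $\Gamma$. Without these two steps, the conclusion that the small maps are homeomorphisms does not follow from the homeomorphism $f\colon N'\to N$ alone.
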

\begin{proof}
  By Proposition~\ref{prop:solidPerCycl}\eqref{prop:solidPerCycl:3} we
  may assume that $C_1$ and $C_2$ are solid Levy cycles in minimal
  position.

  Let $\CC_0$ be the boundary of a small neighbourhood $N$ of
  $C_1\cup C_2$ in $S^2$. By the Bigon criterion,
  Proposition~\ref{prop:BigonCriterion}, all curves in $\CC_0$ are
  non-trivial.  For every $\gamma\in\CC_0$, its image $f(\gamma)$
  belongs to $\CC_0$ and the restriction
  $f\restrict\gamma\colon\gamma\to f(\gamma)$ has degree $1$. Since
  $f$ is a covering away from $A$, it extends to a homeomorphism on
  $N$. Up to isotopy, we may suppose that $N$ is invariant.

  Since $\CC$ does not contain the peripheral or trivial curves in
  $\CC_0$, we should extend $f\colon N\to N$ to all connected
  components of $S^2\setminus N$ that contain at most one marked
  point.

  By passing to an iterate of $f$ to lighten notation, we may assume
  that $f$ preserves each component of $\partial N$. Let $D$ be a
  disc in $S^2\setminus N$, and assume that $f\colon D\to f(D)$ has
  degree at least $2$. Since $f$ preserves $\partial D$ and is a
  homeomorphism on $N$, the image $f(D)$ contains $D$. Likewise,
  $f^{-1}(D)\cap D$ contains a component, say $E$, whose boundary
  contains $\partial D$. We get a map $f\colon E\to D$ of degree at
  least $2$; so $D$ contains at least two critical values, so it is
  essential.

  It follows that $f$ extends to a homeomorphism on the union of $N$
  and the inessential discs touching it.
\end{proof}

\subsection{The Levy decomposition}
A Thurston map $f\colon (S^2,A)\selfmap$ is called \emph{Levy-free} if
$f$ does not admit a Levy cycle and the degree of $f$ is at least
$2$. Here we characterize the multicurves along which $f$ decomposes
into Levy-free maps.

We say that an invariant Levy multicurve $\CC$ is \emph{complete} if
every small Thurston map in $R(f,A,\CC)$ either Levy-free or a
homeomorphism.

\begin{prop}\label{prop:HypLevyMCurvInter}
  Let $\CC_1$ and $\CC_2$ be complete invariant Levy multicurves of a
  Thurston map $f\colon(S^2,A)\selfmap$. Then
  \begin{enumerate}
  \item if a periodic curve $\gamma_1\in\CC_1$ crosses a curve
    $\gamma_2\in \CC_2$, then $\gamma_1$ and $\gamma_2$ belong to
    primitive Levy unicycles;\label{prop:HypLevyMCurvInter:1}
  \item the Levy-free maps in $R(f,A,\CC_1)$ and in $R(f,\CC_2)$ are the
    same;\label{prop:HypLevyMCurvInter:2}
  \item the multicurve $\CC_1\cap\CC_2$ is a complete invariant Levy
    multicurve.\label{prop:HypLevyMCurvInter:3}
  \end{enumerate}
\end{prop}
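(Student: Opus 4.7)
The plan is to prove the three claims in the order \eqref{prop:HypLevyMCurvInter:1}, \eqref{prop:HypLevyMCurvInter:3}, \eqref{prop:HypLevyMCurvInter:2}, with~\eqref{prop:HypLevyMCurvInter:2} dropping out from~\eqref{prop:HypLevyMCurvInter:3}. The central auxiliary fact I will use throughout is that \emph{every periodic curve in an invariant Levy multicurve $\CC$ lies in one of its generating Levy cycles}: if $\gamma\in\CC$ has minimal period $p$ and $f^k(\gamma)$ is isotopic to some $\delta$ in a generating Levy cycle $L$ of length $n$, then from step $k$ onward the orbit of $\gamma$ coincides with that of $\delta$, forcing $n\mid p$ and $p\mid n$, so the orbit of $\gamma$ equals $L$ and $\gamma\in L$. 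In particular the primitive unicycles of $\CC$ are exactly its generating Levy cycles.

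For~\eqref{prop:HypLevyMCurvInter:1}, let $\gamma_1\in\CC_1$ be periodic of period $p$ and cross $\gamma_2\in\CC_2$. Since $\CC_2$ is generated by its Levy cycles, there exists $N$ divisible by $p$ and $\delta$ in a Levy cycle $L_2$ of $\CC_2$ with $f^N(\gamma_2)$ isotopic to $\delta$. Placing $\gamma_1$ in minimal position with a representative of $\gamma_2$ lying in $f^{-N}(\delta)$ and using $f^N(\gamma_1)\simeq\gamma_1$, the standard lifting-of-crossings calculation forces $i(\gamma_1,\delta)>0$. Proposition~\ref{prop:solidPerCycl}\eqref{prop:solidPerCycl:2} then places $\gamma_1$ in a periodic primitive unicycle of $\CC_1$, which by the auxiliary fact is a Levy cycle. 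Swapping the roles of $\gamma_1,\gamma_2$ and applying the same proposition (strictly preperiodic curves cannot cross Levy cycles), $\gamma_2$ is periodic and, again by the auxiliary fact, lies in a Levy cycle of $\CC_2$.

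For~\eqref{prop:HypLevyMCurvInter:3}, invariance of $\CC_1\cap\CC_2$ is immediate, and the Levy property follows from the auxiliary fact: periodic curves of $\CC_1\cap\CC_2$ lie in Levy cycles of both $\CC_1$ and $\CC_2$---hence of $\CC_1\cap\CC_2$---while preperiodic curves of $\CC_1\cap\CC_2$ map forward into these Levy cycles. Completeness is the crux. A periodic sphere $S$ in $(S^2,A)\setminus(\CC_1\cap\CC_2)$ arises by collapsing the boundary curves in $\CC_1\setminus\CC_2$ inside a union of pieces of $(S^2,A)\setminus\CC_1$. For a periodic $\gamma\in\CC_1\setminus\CC_2$, the containing Levy cycle $L_1\subset\CC_1$ cannot be disjoint from $\CC_2$: otherwise $L_1$ would sit inside a single piece of $R(f,A,\CC_2)$ and induce a Levy cycle of that return map, contradicting completeness of $\CC_2$. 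Hence $L_1$ crosses some Levy cycle $L_2\subset\CC_2$; after simultaneous solid minimal positioning via Proposition~\ref{prop:solidPerCycl}\eqref{prop:solidPerCycl:3}, Corollary~\ref{cor:IntOfLevyCycl} makes the pieces of $R(f,A,\CC_1)$ touching $L_1\cup L_2$ into a homeomorphism cycle, so those adjacent to $\gamma$ are homeomorphisms. Strictly preperiodic curves of $\CC_1\setminus\CC_2$ are handled by tracking their forward orbits down to the periodic case just treated. Consequently the small return map on $S$ is either a union of homeomorphism pieces glued into a homeomorphism or a single Levy-free piece of $R(f,A,\CC_1)$.

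Statement~\eqref{prop:HypLevyMCurvInter:2} now follows: the analysis in~\eqref{prop:HypLevyMCurvInter:3} identifies the Levy-free pieces of $R(f,A,\CC_1)$ with those of $R(f,A,\CC_1\cap\CC_2)$, and symmetrically for $\CC_2$, so the two families of Levy-free return maps coincide. The main obstacle will be the completeness portion of~\eqref{prop:HypLevyMCurvInter:3}: coordinating simultaneous solid minimal positioning of Levy cycles from $\CC_1$ and $\CC_2$, uniformly applying Corollary~\ref{cor:IntOfLevyCycl}, and carrying the homeomorphism-or-Levy-free alternative along strictly preperiodic gluing curves.
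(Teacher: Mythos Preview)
Your argument for~\eqref{prop:HypLevyMCurvInter:1} and for the invariance and Levy property in~\eqref{prop:HypLevyMCurvInter:3} is fine and matches the paper. The gap is in your completeness argument. You assert that if $\gamma\in\CC_1\setminus\CC_2$ is periodic with Levy cycle $L_1\subset\CC_1$, then $L_1$ \emph{cannot} be disjoint from $\CC_2$, because otherwise $L_1$ would induce a Levy cycle in a return map of $R(f,A,\CC_2)$, ``contradicting completeness of $\CC_2$.'' This is not a contradiction: completeness only says each return map is Levy-free \emph{or a homeomorphism}, and a homeomorphism piece happily contains Levy cycles. So $L_1$ may well be disjoint from $\CC_2$ and sit inside a homeomorphism cycle of $R(f,A,\CC_2)$. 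Your dichotomy collapses, and with it the route from Corollary~\ref{cor:IntOfLevyCycl} to ``the $\CC_1$-pieces adjacent to $\gamma$ are homeomorphisms.'' Even in the case where $L_1$ does cross some $L_2\subset\CC_2$, your use of Corollary~\ref{cor:IntOfLevyCycl} is too quick: that corollary produces a homeomorphism cycle for the multicurve generated by the boundary of a neighbourhood of $L_1\cup L_2$, not for $\CC_1$; passing from one to the other is exactly the work you have not done.

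The paper avoids this by reversing your order: it proves~\eqref{prop:HypLevyMCurvInter:2} directly, starting from a Levy-free cycle $\{S_i\}$ in $R(f,A,\CC_1)$ and showing $\CC_2$ cannot meet $\bigcup S_i$. If $\CC_2$ crossed $\bigcup\partial S_i$, part~\eqref{prop:HypLevyMCurvInter:1} and Corollary~\ref{cor:IntOfLevyCycl} give a homeomorphism cycle $\{S'_i\}$ containing the crossing Levy cycles; if $\bigcup S'_i\supset\bigcup S_i$ we get a degree contradiction, and otherwise one \emph{iterates} the corollary to enlarge $\{S'_i\}$ until it swallows $\{S_i\}$. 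This enlargement step is the missing idea in your sketch. Once~\eqref{prop:HypLevyMCurvInter:2} is established, \eqref{prop:HypLevyMCurvInter:3} follows formally, since a Levy-free $\CC_1$-piece then has boundary in $\CC_1\cap\CC_2$ and your gluing argument for the homeomorphism case goes through.
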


It follows that there is a unique minimal complete invariant Levy
multicurve, called the \emph{canonical Levy obstruction of $f$} and
written $\CC_{f,\text{Levy}}$. Any other invariant complete Levy
multicurve $\CC$ contains $\CC_{f,\text{Levy}}$ as a sub-multicurve.

\begin{proof}[Proof of Proposition~\ref{prop:HypLevyMCurvInter}]
  \eqref{prop:HypLevyMCurvInter:1} By the definition of a Levy
  multicurve, for every $\gamma_2\in\CC_2$ there is a Levy cycle $C_2$
  such that $\gamma_2$ is an iterated preimage of a curve in
  $C_2$. Consider $\gamma_1\in\CC_1$. Then $\gamma_1$ crosses $C_2$,
  because $\gamma_1$ is periodic. It follows from
  Proposition~\ref{prop:solidPerCycl}\eqref{prop:solidPerCycl:2} that
  $\gamma_1$ belongs to a primitive Levy unicycles, and by
  symmetry the same is true for $\gamma_2$.

  \eqref{prop:HypLevyMCurvInter:2} Consider a Levy-free cycle
  $f^p\colon S_0\to S_1\to\dots\to S_p=S_0$ in $R(f,A,\CC_1)$. We show
  that $\CC_2$ intersects none of the $S_1,S_2,\dots,S_p$; this
  implies that $\bigsqcup_i S_i$ is contained in a Levy-free cycle
  $S'_0\to\dots\to S'_{p'}=S'_0$ of small spheres of $R(f,A,\CC_2)$, and
  symmetrically $\bigsqcup_j S'_j$ is contained in a Levy-free cycle of
  small spheres of $R(f,A,\CC_1)$, so $\bigsqcup_i S_i$ and
  $\bigsqcup_j S'_j$ are the same.

  Assume therefore for contradiction that $\CC_2$ intersects some
  small sphere $S_i$. If this intersection is entirely contained in
  $S_i$, it will generate a Levy cycle in $\bigsqcup_i S_i$,
  contradicting the assumption that $\bigsqcup_i S_i$ is Levy-free;
  therefore $\CC_2$ crosses $\bigsqcup_i\partial S_i$.

  There is then a periodic curve in $\bigsqcup_i \partial S_i$
  crossing $\CC_2$. Choose a curve cycle
  $C_1\subseteq\bigsqcup_i \partial S_i$ and a curve cycle
  $C_2\subseteq\CC_2$ such that $C_1$ crosses $C_2$. By
  Part~\eqref{prop:HypLevyMCurvInter:1} of the proposition, $C_1$ and
  $C_2$ are anti-Cantor Levy cycles. By
  Corollary~\ref{cor:IntOfLevyCycl}, there is a small homeomorphism
  cycle $\{S'_i\}_i$ containing $C_1\cup C_2$. All curves in
  $\bigsqcup_i\partial S'_i$ belong to Levy cycles.

  If $\bigcup_i S'_i$ contains (up to isotopy) the union $\bigcup_i
  S_i$, then we have a contradiction because the degree of $f$ on
  $\bigcup_i S_i$ is at least $2$ while it is $1$ on $\bigcup_i
  S'_i$.

  We now show that we can always reduce to this case.  If
  $\bigcup_i S'_i$ does not contain $\bigcup_i S_i$, then then there
  is a curve cycle in $\bigsqcup_i\partial S'_i$ crossing at least one
  curve in $\bigsqcup_i \partial S_i$.  This implies that there is a
  Levy cycle in $\bigsqcup_i\partial S'_i$ crossing a Levy cycle in
  $\bigsqcup_i \partial S_i$. Invoking again
  Corollary~\ref{cor:IntOfLevyCycl}, we can enlarge $\bigcup_i
  S'_i$. Repeating, we may enlarge $\bigcup_i S'_i$ so that it
  contains $\bigcup_i S_i$.

  Finally, \eqref{prop:HypLevyMCurvInter:3} follows formally
  from~\eqref{prop:HypLevyMCurvInter:2}.
\end{proof}

\begin{defn}[Levy decomposition]\label{def:HypDecomp}
  The Levy decomposition of a Thurston map $f\colon(S^2,A)\selfmap$ is
  the decomposition of $f$ along the canonical Levy obstruction
  $\CC_{f,\text{Levy}}$.
\end{defn}

We may understand the Levy decomposition of a Thurston map
$f\colon(S^2,A)\selfmap$ as follows, if we consider more general
subsets of $S^2$ on which $f$ acts as a homeomorphism. Let us call
``Levy kernel'' a subset $L\subseteq S^2$ together with a partition
$L=\bigsqcup_{i\in I} S_i$ and a map $f\colon I\selfmap$ such that
each $S_i$ is either an essential simple closed curve or an essential
small sphere, and is considered up to isotopy; we require that every
$S_i$ be isotopic to a degree-$1$ preimage of $S_{f(i)}$, and that if
$S_i$ is a curve, then it is not homotopic to any (boundary) curve in
$\bigcup_{j\not=i}\partial S_j$. (The last condition replaces the
``non-homotopic'' condition in the definition of a multicurve.)  There
is a natural order on Levy kernels, given by inclusion up to isotopy.

We may think about a Levy kernel as a subset of a sphere on which $f$
has degree one.  Corollary~\ref{cor:IntOfLevyCycl} states that if two
Levy kernels $L_1$, $L_2$ intersect, then we can construct a bigger
Levy kernel $\widetilde L$ that contains both $L_1$ and
$L_2$. Therefore, there exists a maximal Levy kernel, and its boundary
generates the Levy decomposition.

\section{Self-similar groups and automata}\label{ss:bisets}
\noindent We recall basic notions about self-similar groups; the
reference is~\cite{nekrashevych:ssg}.

\subsection{Contracting bisets}
Let $G$ be a group. Recall that a \emph{$G$-$G$-biset} is a set $B$
endowed with commuting left and right $G$-actions. Such a biset $B$ is
called \emph{left-free} if the left $G$-action is free, i.e.\ has
trivial stabilizers. A \emph{basis} is a choice of one element per
left $G$-orbit: a subset $X\subseteq B$ such that
$B=\bigsqcup_{x\in X}G x$. We therefore have a bijection
$G\times X\leftrightarrow B$, and using it we may write the right
$G$-action as a map $\Phi\colon X\times G\to G\times X$, determining
the structure of $B$.

Important examples of bisets come from dynamics: let
$f\colon\mathcal M'\to \mathcal M$ be a partial self-covering of a
topological space $\mathcal M$, defined on a subset
$\mathcal M'\subseteq\mathcal M$. Fix a basepoint $*\in\mathcal M$,
and set $G\coloneqq\pi_1(\mathcal M,*)$. Set
\begin{equation}\label{eq:B(f)}
  B(f)\coloneqq\{\gamma\colon[0,1]\to\mathcal M\mid \gamma(0)=f(\gamma(1))=*\}\,/\,\text{homotopy},
\end{equation}
with left $G$-action given by pre-concatenation and right $G$-action
given by post-concatenation of the unique $f$-lift making the
resulting path continuous. A basis of $B$ consists of, for every
$z\in f^{-1}(*)$, of a choice of path in $\mathcal M$ from $*$ to
$z$.

Of particular interest, for us, is $f\colon(S^2,A)\selfmap$ a Thurston
map, with $\mathcal M\coloneqq S^2\setminus A$ and
$\mathcal M'\coloneqq S^2\setminus f^{-1}(A)$.  Recall that two
Thurston maps $f_0\colon(S^2,A_0)\selfmap$ and
$f_1\colon(S^2,A_1)\selfmap$ are called \emph{combinatorially
  equivalent} if there is a path
$(f_t\colon(S^2,A_t)\selfmap)_{t\in[0,1]}$ of Thurston maps joining
them. Kameyama proved in~\cite{kameyama:thurston}, in another
language, that $f_0,f_1$ are combinatorially equivalent if and only
$B(f_0)$ and $B(f_1)$ are \emph{conjugate}: setting
$G_i=\pi_1(S^2\setminus A_i,*_i)$ for $i=0,1$, there exists a
homeomorphism $\phi\colon S^2\setminus A_0\to S^2\setminus A_1$ and a
bijection $\beta\colon B(f_0)\to B(f_1)$ with
$g\cdot b\cdot h=\phi_*(g)\cdot \beta(b)\cdot\phi_*(h)$ for all
$g,h\in G_0,b\in B(f_0)$. See~\cite{bartholdi-dudko:bc2} for details.

Bisets may be composed: the product of two $G$-$G$-bisets $B,C$ is
$B\otimes_G C\coloneqq (B\times C)/\{(b g,c)=(b,g c)\}$, and is related
to the composition of partial coverings: we have a natural isomorphism
$B(g\circ f)\cong B(f)\otimes B(g)$. If $B,C$ are left-free with
respective bases $S,T$, then $B\otimes C$ is left-free with basis
$S\times T$.

\begin{defn}[\cite{nekrashevych:ssg}*{Definition~2.11.8}]\label{defn:contracting}
  Let $B$ be a $G$-$G$-biset. It is called \emph{contracting} if for
  some basis $X\subseteq B$ there exists a finite subset
  $N\subseteq G$ with the following property: for every $g\in G$ and
  every $n$ large enough we have the inclusion
  $X^{\otimes n}g\subseteq N X^{\otimes n}$ in $B^{\otimes n}$.
\end{defn}
Recall from~\cite{nekrashevych:ssg}*{Proposition~2.11.6} that, if $B$
is contracting for some basis $X$, then it is contracting for every
basis, possibly with a different $N$.  The set $N$ in
Definition~\ref{defn:contracting} is certainly not unique; but for
every basis $X$ there exists a minimal such $N$, written $N(B,X)$ and
called the \emph{nucleus} of $(B,X)$.

Recall also from~\cite{nekrashevych:ssg}*{Proposition~2.11.3} that, if
$G$ is finitely generated, $X$ is a basis of $B$ and $B$ is right
transitive, then $G$ is generated by $N(B,X)$. These hypotheses are
satisfied by bisets of Thurston maps,
see~\cite{bartholdi-dudko:bc2}*{Definition~\ref{bc2:dfn:SphBis}}. It
is then convenient to express the structure of $B$ by a \emph{Mealy
  automaton}: it is a finite directed labeled graph with vertex set
$N(B,X)$, with labels in $X\times X$ on edges, and with an edge labeled
`$x\to y$' from $g\in N(B,X)$ to $h\in N(B,X)$ whenever the equality
$x g=h y$ holds in $B$. In fact, one may consider the graph
$\mathfrak G$ with vertex set $G$ and an edge from $g\in G$ to
$h\in G$ labeled `$x\to y$' whenever $x g=h y$ holds in $B$, and then
$N(B,X)$ is precisely the forward attractor of $\mathfrak G$: an
equivalent formulation of Definition~\ref{defn:contracting} is that
every infinite path in $\mathfrak G$ eventually reaches $N(B,X)$ where
it stays. Here is an example of automaton, to which we shall return:
\begin{equation}\label{eq:basilica}
  \begin{fsa}[baseline]
    \node[state] (a) at (-2.7,1.3) {$a$};
    \node[state] (b) at (-2.7,-1.3) {$b$};
    \node[state] (e) at (0,0) {$1$};
    \path (a) edge node[above=2mm] {$0\to1$} (e) edge[bend left] node {$1\to0$} (b)
    (b) edge node[below=2mm] {$0\to0$} (e) edge[bend left] node {$1\to1$} (a)
    (e) edge[loop right] node[above=2mm] {$0\to0$} node[below=2mm] {$1\to1$} (e);
  \end{fsa}
\end{equation}
In this automaton, we have $X=\{0,1\}$ and $G=\langle a,b\rangle$. The
biset structure is determined by the equations
\[0\cdot a=1,\quad 1\cdot a=b\cdot0,\quad 0\cdot b=0,\quad 1\cdot
  b=a\cdot1.\] The reader may check that this is the biset $B(f)$ as
defined in~\eqref{eq:B(f)} for the partial self-covering $f(z)=z^2-1$
of $\hC\setminus\{0,-1,\infty\}$.

\begin{prop}
  Let $G$ be a finitely generated group with solvable word problem,
  and let $B$ be a computable left-free biset (namely, for a basis $X$
  the structure map $X\times G\to G\times X$ is computable). Then it
  is \emph{semi-decidable} whether $B$ is contracting: there is an
  algorithm that either runs forever (if $G$ is not contracting) or
  returns $N(B,X)$ (if $G$ is contracting).
\end{prop}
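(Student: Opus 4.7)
The plan is to describe an enumeration procedure that halts precisely when $B$ is contracting and, in that case, outputs the nucleus. It relies on the characterization of contracting in terms of the graph $\mathfrak{G}$ introduced just before the proposition: $B$ is contracting iff its forward attractor $N(B,X)$ is finite. Since $G$ has solvable word problem and the biset structure map $X\times G\to G\times X$ is computable, the one-step successor
\[\operatorname{succ}(g)\coloneqq\{h\in G:\exists\,x,y\in X\text{ with }xg=hy\},\]
of size at most $|X|$, is effectively computable, and equality in $G$ between words in the generators can be tested.

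I fix a finite generating set $S$ of $G$ and set $N_0\coloneqq S\cup S^{-1}\cup\{1\}$. Iteratively, define
\[N_{i+1}\coloneqq N_i\cup\bigcup_{g_1,g_2\in N_i}\operatorname{succ}(g_1 g_2),\]
and at each stage test, using the word problem, whether $N_{i+1}=N_i$ as subsets of $G$. Halt and output $N_i$ in that case; otherwise continue. If the iteration halts with $N\coloneqq N_i$, then $S\subseteq N$ and $\operatorname{succ}(N\cdot N)\subseteq N$ (in particular $\operatorname{succ}(N)\subseteq N$, taking $g_2=1$). I claim any $g\in G$ has iterated successors eventually in $N$: writing $g=h_1\cdots h_k$ with $h_i\in N$ and, for $x\in X$, setting $x_0=x$ and $x_{i-1}h_i=h_i'x_i$, one gets $\operatorname{succ}_x(g)=h_1'\cdots h_k'$, and the product of two adjacent factors satisfies $h_{2j-1}'h_{2j}'=\operatorname{succ}_{x_{2j-2}}(h_{2j-1}h_{2j})\in N$ by closure, so $\operatorname{succ}(g)$ has $N$-word-length at most $\lceil k/2\rceil$; by induction $\operatorname{succ}_n(g)\subseteq N$ for $n\geq\lceil\log_2 k\rceil$. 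This verifies the contracting condition with finite witness $N$.

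Conversely, if $B$ is contracting, the standard quantitative analysis of word-length decay under iterated successors (\cite{nekrashevych:ssg}*{\S2.11}) keeps each $N_i$ inside a fixed bounded ball of $G$, so the nondecreasing sequence $N_i$ of finite sets must stabilize in finitely many steps. Once a stable $N$ is obtained, the nucleus itself is recovered as the stable value of the descending sequence of finite subsets $N\supseteq\operatorname{succ}(N)\supseteq\operatorname{succ}_2(N)\supseteq\cdots$.

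The principal obstacle is the pairing identity $h_{2j-1}'h_{2j}'=\operatorname{succ}_{x_{2j-2}}(h_{2j-1}h_{2j})$ driving the halving of $N$-word-length per successor step, together with the converse quantitative estimate ensuring that the iteration does not leave a bounded region of $G$ in the contracting case. The remaining details are routine given the computability assumptions on $G$ and $B$.
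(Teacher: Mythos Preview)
Your termination argument in the contracting case has a genuine gap. The ``standard quantitative analysis'' you invoke from \cite{nekrashevych:ssg}*{\S2.11} gives a bound of the form $\ell(\operatorname{succ}_{n_0}(g))\le\tfrac12\ell(g)+C$ only after some fixed number $n_0$ of successor steps; a \emph{single} application of $\operatorname{succ}$ may well increase word length. Your iteration alternates one step of $\operatorname{succ}$ with squaring, so nothing prevents $N_i$ from growing indefinitely. Concretely, take $G=\Z=\langle a\rangle$, $X=\{0,1\}$, and the biset defined by $0\cdot a=a^3\cdot 1$, $1\cdot a=a^{-2}\cdot 0$. One checks $\operatorname{succ}(a^{2k})=\{a^k\}$ and $\operatorname{succ}(a^{2k+1})=\{a^{k+3},a^{k-2}\}$, so $|\operatorname{succ}(a^n)|\le|n|/2+3$ and the biset is contracting with nucleus $\{a^{-5},\dots,a^5\}$. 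But your iteration gives $N_i=\{a^{-(2i+1)},\dots,a^{2i+1}\}$ for $i\ge 2$: at each stage the product $N_i^2$ contains $a^{4i+1}$, whose successor $a^{2i+3}$ lies strictly outside $N_i$. The sequence never stabilizes.

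The paper sidesteps this by applying $\operatorname{succ}$ \emph{to stabilization} before enlarging: it sets $\psi(A)=\bigcup_{k\ge0}\phi^k(A)$ (finite because the tail lands in the nucleus) and then $\omega(A)=\bigcap_{n\ge0}\phi^n(\psi(A))\subseteq N(B,X)$, and iterates $N_n=\omega(N_{n-1}S)$. Each $N_n$ is thus forced into the nucleus before the next product with $S$ is taken, so the $N_n$ form an increasing chain inside the finite set $N(B,X)$ and must stabilize. Your scheme can be repaired along the same lines: replace the one-shot $\operatorname{succ}$ in your recursion by iterated $\operatorname{succ}$ until the forward orbit stabilizes (a test you can perform, since word problem and structure map are computable), and only then feed the result back into products.
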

\begin{proof}
  Assume that $B$ is contracting, with nucleus $N(B,X)$. Denote by
  $\mathscr P_f(G)$ the set of finite subsets of $G$, and define the
  self-map $\phi\colon\mathscr P_f(G)\selfmap$ by
  \[\phi(A)=\{h\in G\mid\text{ there exist $x,y\in X$ with }h y \in x A\}.
  \]
  Clearly $\phi$ is computable, and $\phi(N(B,X))=N(B,X)$. For
  $A\subseteq G$ finite, set
  $\psi(A)\coloneqq\bigcup_{k\ge0}\phi^k(A)$. The sequence
  $(\bigcup_{k=0}^n\phi^k(A))_n$ is ascending and eventually all
  $\phi^k(A)$ are contained in $N(B,X)$, so
  $\psi\colon\mathscr P_f(G)\selfmap$ is computable. Again for
  $A\subseteq G$ finite, set
  $\omega(A)\coloneqq\bigcap_{n\ge0}\phi^n(\psi(A))$. The sequence
  $(\phi^n(\psi(A)))_n$ is a decreasing subsequence of the finite set
  $\psi(A)$, so $\omega$ is also computable.

  Let $S$ be a finite generating set for $G$, and assume
  $1\in S=S^{-1}$. Set $N_0\coloneqq \{1\}$, and for $n\ge1$ set
  $N_n\coloneqq\omega(N_{n-1}S)$. Then $(N_n)_n$ is an increasing
  subsequence of $N(B,X)$, so it stabilizes, and its limit
  $\bigcup_{n\ge0} N_n$ is computable and equals $N(B,X)$.
\end{proof}

\subsection{Limit spaces}\label{ss:limit}
Let $B$ be a contracting $G$-$G$-biset, and let $X$ be a basis of
$B$. Define a relation on $X^\infty$, called \emph{asymptotic
  equivalence}, by
\begin{multline*}
  (x_1x_2\dots)\sim(y_1y_2\dots)\Longleftrightarrow\\
  \exists(g_0,g_1,g_2,\dots)\in
  G^\infty\text{ with $\#\{g_n\}<\infty$ and }x_n g_n=g_{n-1}y_n\text{
    for all }n\ge1.
\end{multline*}
More precisely, one says in this case that $x_1x_2\dots$ and
$y_1y_2\dots$ are \emph{$g_0$-equivalent}.  The \emph{limit space} of
$B$ is the quotient
\[\Julia(B)\coloneqq X^\infty/{\sim}.\]
More precisely, it is a topological orbispace, with at class
$[x_1 x_2\dots]\in\Julia(B)$ the isotropy group
$\{g_0\in G\mid x_1x_2\dots\text{ is $g_0$-equivalent to itself}\}$.

By~\cite{nekrashevych:ssg}*{Theorem~3.6.3}, we have
$x_1x_2\dots\sim y_1y_2\dots$ if and only if there exists a
left-infinite path in the Mealy automaton of $B$, with labels
$\dots,x_2\to y_2,x_1\to y_1$ on its arrows. These sequences are
$g_0$-equivalent for $g_0$ the terminal vertex of the path. In
particular, $\sim$-equivalence classes have cardinality at most
$\#N(B,X)$.  The topological (orbi)space $\Julia(B)$ is compact,
metrizable, of finite topological dimension. For example,
\eqref{eq:basilica} gives
$x_1\dots x_n0(11)^\infty\sim x_1\dots x_n1(10)^\infty$ for all
$x_1,\dots,x_n\in X=\{0,1\}$.

Denote by $s\colon X^\infty\selfmap$ the shift map
$x_1x_2x_3\dots\mapsto x_2x_3\dots$. Clearly the asymptotic
equivalence is invariant under $s$, so $s$ induces a self-map
$s\colon\Julia(B)\selfmap$.
By~\cite{nekrashevych:ssg}*{Corollary~3.6.7}, the dynamical system
$(\Julia(B),s)$ is independent, up to topological conjugacy, of
the choice of $X$. Note that $s$ only induces a partial self-covering
of $\Julia(B)$, if the orbispace structure of $\Julia(B)$ is
taken into account.

Let $f\colon\mathcal M'\to\mathcal M$ be a partial self-covering as above, and
assume that $\mathcal M$ has a complete length metric which is
expanded by $f$. The \emph{Julia set} of $f$ is defined as the
accumulation set of backward iterates of a generic point: fix
$z\in\mathcal M$, and define
\begin{equation}\label{eq:julia}
  \Julia(f)\coloneqq\overline{\bigcap_{n\ge0}\bigcup_{m\ge n}f^{-m}(z)},
\end{equation}
a definition that does not depend on the choice of $z$.

By~\cite{nekrashevych:ssg}*{Theorem~5.5.3} the biset $B(f)$ defined
in~\eqref{eq:B(f)} is contracting, and the dynamical systems
$(\Julia(f),f)$ and $(\Julia(B(f)),s)$ are conjugate.

The following image shows the Julia set of $f(z)=z^2-1$, the loops
$a,b\in\pi_1(\hC\setminus\{0,-1,\infty\},*)$, and the basis
$\{\ell_{x_0},\ell_{x_1}\}$ that were used to compute the
automaton~\ref{eq:basilica} ($\ell_{x_1}$ is so short that it is not
visible):
\begin{center}
  \begin{tikzpicture}[>=stealth]
\useasboundingbox (-6,-2) rectangle (6,2);
\node[gray] at (0,0) {\includegraphics[width=12cm,height=4cm]{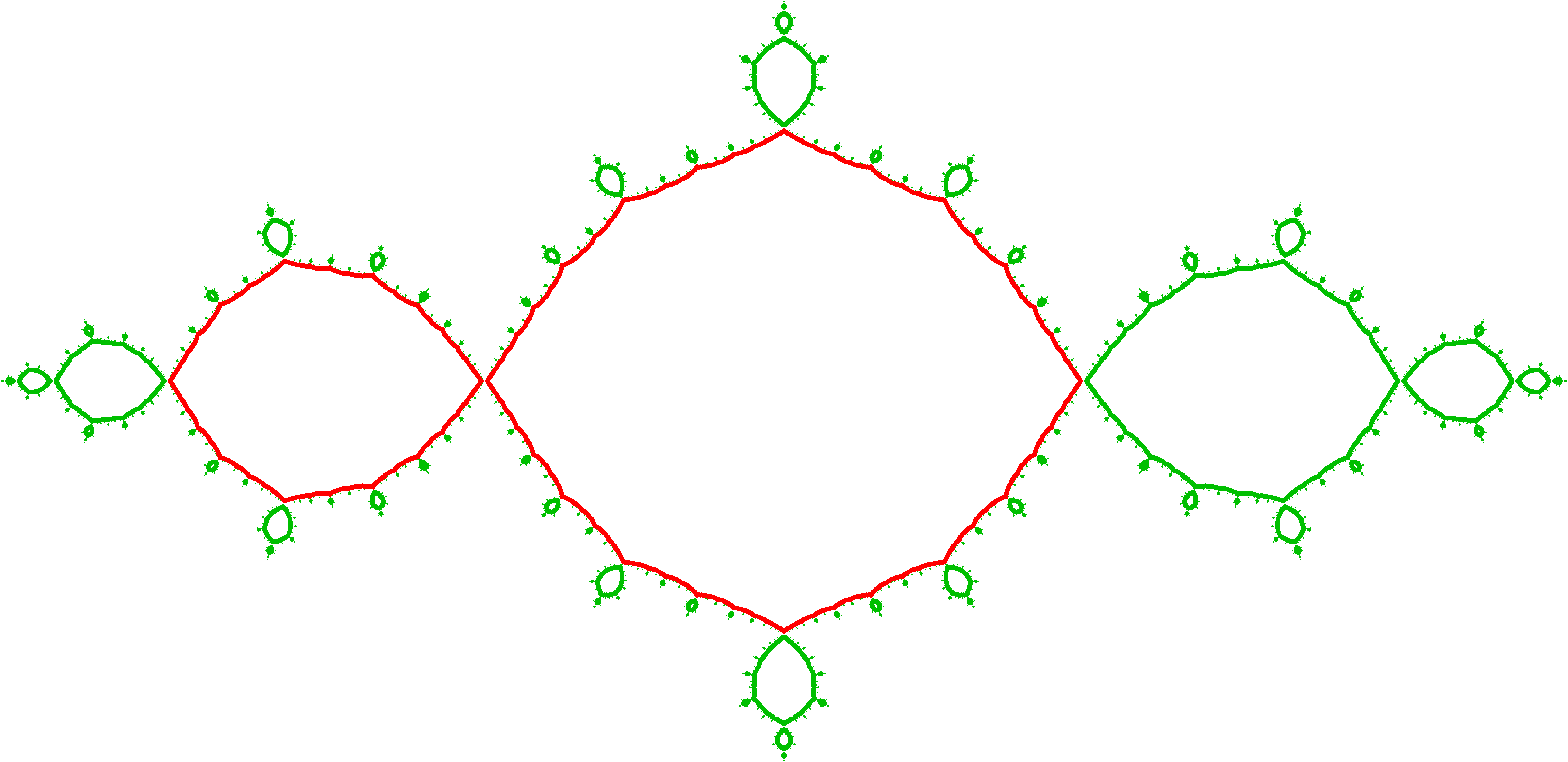}};
\node at (0,0) {$0$};
\node at (-3.6,0) {$-1$};
\node[inner sep=0pt] (*) at (-2.1,0) {$*$};
\node[inner sep=0pt] (x1) at (-2.6,0) {$x_1$};
\node[inner sep=0pt] (x0) at (2.6,0) {$x_0$};
\draw[thick,->] (*) .. controls (2.5,-4) and (2.5,4) .. node[left] {$a$} (*);
\draw[thick,->] (*) .. controls (-5,2.3) and (-5,-2.3) .. node [very near end,below] {$b$} (*);
\draw[thick,dashed,->] (x1) .. controls (-5.3,2) and (-5.3,-2) .. node [near start,above left=-1mm] {$f^{-1}(a)$} (x1);
\draw[thick,dashed,->] (x0) .. controls (5.3,-2) and (5.3,2) .. node [near start,below right=-1mm] {$f^{-1}(a)$} (x0);
\draw[thick,dashed,->] (x0) .. controls (1.4,2) and (-1.4,2) .. node [near start,above right=-1mm] {$f^{-1}(b)$} (x1);
\draw[thick,dashed,->] (x1) .. controls (-1.4,-2) and (1.4,-2) .. node [near start,below left=-1mm] {$f^{-1}(b)$} (x0);
\draw[thick,dotted,->] (*) -- (x1);
\draw[thick,dotted,->] (*) .. controls (-1.3,0.3) and  (1,1.6) .. node [below] {$\ell_{x_0}$} (x0);
\end{tikzpicture}
\end{center}

\subsection{Orbisphere contracting bisets}\label{ss:orbisphere}
We slightly modify the definition of ``contracting'' for sphere
bisets, because of the orbisphere structures. Let ${}_GB_G$ be a
sphere biset with $G=\pi_1(S^2,A)$. Recall
from~\cite{bartholdi-dudko:bc2}*{Equation~\eqref{bc2:eq:minorbispace}} that
there is a minimal orbisphere structure $\ord_B$ given by $B$. We call
an orbisphere structure $\ord\colon A\to\{2,3,\dots,\infty\}$
\emph{bounded} if $\ord(a)=\infty\Leftrightarrow\ord_B(a)=\infty$ and
$\ord(a)\deg_a(B)\mid\ord(B_*(a))$ for all $a\in A$. Let $\overline G$
denote the quotient orbisphere group
$G/\langle\gamma_a^{\ord(a)}:a\in A\rangle$. Then we call $B$ an
\emph{orbisphere contracting} biset if
$\overline G\otimes_G B\otimes_G\overline G$ is contracting for some
bounded orbisphere structure on $(S^2,A)$.

\section{Expanding non-torus maps}
Our purpose is, in this section, to endow the sphere $(S^2,A)$ with a
smooth metric that is expanded by a self-map
$f\colon (S^2,A)\selfmap$. We recall that by $A^\infty\subset A$ we
denote the forward orbit of periodic critical points of $f$.  A
\emph{non-torus} map is a map that is not doubly covered by a torus
endomorphism.

\begin{defn}[Metrically expanding maps]
  Consider a Thurston map $f\colon (S^2,A)\selfmap$ and let $A'$ be
  a forward-invariant subset of $A^\infty$. We say that $f$ is
  \emph{metrically expanding} if there exists a length metric $\mu$ on
  $S^2\setminus A^\infty$ such that
 \begin{enumerate}
 \item for every non-trivial rectifiable curve
   $\gamma\colon[0,1]\to S^2\setminus A'$ the length of any lift of
   $ \gamma$ under $f$ is strictly less than the length of $\gamma$;
   and \label{defn:MetrExp:ExpCond}
 \item at all $a\in A'$ the first return map of $f$ is locally
   conjugate to $z\mapsto z^{\deg_a(f ^n)}$.\label{defn:MetrExp:NormCond}
 \end{enumerate}
  
 If $A'=A^\infty$, then $f\colon (S^2,A)\selfmap$ is called a
 \emph{B\"ottcher expanding} map.
\end{defn}

If $\mu=\dd s$ is a Riemannian orbifold metric on $(S^2,A)$
(i.e.~$\mu$ is a smooth metric on $S^2\setminus A'$ with possible cone
singularities in $A\setminus A'$), then
Condition~\eqref{defn:MetrExp:ExpCond} may be replaced by
$f^*\dd s<\dd s$.

Let us now define a more general notion of topological
expansion. Consider first a covering map
$f\colon \mathcal M'\to \mathcal M$ between compact topological
orbispaces, with $\mathcal M'\subseteq \mathcal M$. We call $f$
\emph{topologically expanding} if there exists a finite open covering
$\mathcal M'=\bigcup\mathcal U_i$ such that connected components of
$f^{-n}(\mathcal U_i)$ get arbitrarily small as $n\to\infty$, in the
sense that for every finite open covering
$\mathcal M=\bigcup\mathcal V_j$ there exists $n\in\N$ such that every
connected component of every $f^{-n}(\mathcal U_i)$ is contained in
some $\mathcal V_j$. Equivalently, the diameter of connected
components of $f^{-n}(\mathcal U_i)$ tends to $0$ with respect to any
metric on $\mathcal M'$.

\begin{defn}[Topological expanding maps]\label{defn:TopExp}
  Consider a Thurston map $f\colon(S^2,A)\selfmap$ and let $A'$ be a
  forward-invariant subset of $A^\infty$. We call $f$
  \emph{topologically expanding} if there exist
  $\mathcal M'\subseteq \mathcal M\subseteq S^2$ compact with a
  topologically expanding orbifold covering map
  $f\colon (\mathcal M',A)\to (\mathcal M,A)$, such that every
  connected component $\mathcal U$ of $S^2\setminus \mathcal M$ is a
  disk containing a unique point $a\in A'$, and $\mathcal U$ is
  attracted to $a$, and the first return of $f$ is locally conjugate
  to $z\mapsto z^{\deg_a(f ^n)}$ at $a$.

  If $A'=A^\infty$, then $f\colon (S^2,A)\selfmap$ is called a
  \emph{B\"ottcher topologically expanding} map.
\end{defn}

\begin{prop}\label{prop:metr=>top}
  A metrically expanding map is topologically expanding.
\end{prop}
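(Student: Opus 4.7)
The plan is to excise small forward-invariant disks around the cusps in $A'$, use the local normal form there to verify the boundary conditions of Definition~\ref{defn:TopExp}, and then combine compactness of the remaining set with the strict length-expansion of $\mu$ to obtain topological expansion.

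For each periodic cycle $a_0\to a_1\to\cdots\to a_{n-1}\to a_0$ in $A'$, of first-return local degree $d\coloneqq\deg_{a_0}(f^n)\geq 2$, I would take $U_{a_0}=\{|z|<r\}$ in the chart provided by~\eqref{defn:MetrExp:NormCond} conjugating $f^n$ to $z\mapsto z^d$, for small $r<1$, and set $U_{a_j}\coloneqq f^j(U_{a_0})$ for $j=1,\ldots,n-1$. Each $U_{a_j}$ is a topological disk around $a_j$, and for $r$ small they are pairwise disjoint across all cycles. By construction $f(U_{a_j})=U_{a_{j+1}}$ for $j<n-1$, while $f(U_{a_{n-1}})=f^n(U_{a_0})=\{|z|<r^d\}\subsetneq U_{a_0}$, so $\bigcup_{a\in A'}U_a$ is strictly forward invariant under $f$. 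Setting $\mathcal{M}\coloneqq S^2\setminus\bigcup_{a\in A'}U_a$ and $\mathcal{M}'\coloneqq f^{-1}(\mathcal{M})$ gives compact $\mathcal{M}'\subseteq\mathcal{M}$; every component of $S^2\setminus\mathcal{M}$ is a disk attracted to its unique cusp (since $z\mapsto z^d$ with $d\geq 2$ is a local contraction), and the first-return normal form at the cusps holds by construction.

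Next I would upgrade the pointwise strict expansion~\eqref{defn:MetrExp:ExpCond} to uniform contraction: for every $0<\epsilon<D<\infty$, there is $\lambda=\lambda(\epsilon,D)<1$ with $\operatorname{length}_\mu(\tilde\gamma)\leq\lambda\operatorname{length}_\mu(\gamma)$ for every $f$-lift $\tilde\gamma\subseteq\mathcal{M}'$ of a rectifiable $\gamma\subseteq\mathcal{M}$ whose $\mu$-length lies in $[\epsilon,D]$. This is the step I expect to be the main obstacle, because $\mu$ need not be Riemannian and length is only lower semi-continuous under uniform convergence. The natural approach is by compactness and contradiction: an offending sequence of curves $\gamma_k$, reparametrised at constant speed on $[0,1]$, is uniformly Lipschitz into the compact sets $\mathcal{M}$ and (after reparametrising $\tilde\gamma_k$ by the same parameter) $\mathcal{M}'$, so Arzel\`a--Ascoli extracts a limit pair $(\gamma_\infty,\tilde\gamma_\infty)$ with $f\circ\tilde\gamma_\infty=\gamma_\infty$; the lower bound $\operatorname{length}_\mu(\gamma_k)\geq\epsilon$ forces $\gamma_\infty$ to be non-trivial, and tracing arc-length parametrisations through the limit (with careful use of lower semi-continuity of length and of the normalisation $\operatorname{length}_\mu(\tilde\gamma_k)/\operatorname{length}_\mu(\gamma_k)\to 1$) produces a non-trivial limit curve whose length equals that of its lift, contradicting~\eqref{defn:MetrExp:ExpCond}.

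With uniform contraction in hand, I would cover $\mathcal{M}'$ by finitely many small open balls $\mathcal{U}_i$, each contained in an evenly covered neighbourhood of $f\colon\mathcal{M}'\to\mathcal{M}$ (using orbifold charts at the points of $A$), and set $D\coloneqq\max_i\operatorname{diam}_\mu\mathcal{U}_i$. Given $\epsilon>0$, any two points in a connected component $V$ of $f^{-n}(\mathcal{U}_i)$ can be joined by the iterated $f^n$-lift of a path in $\mathcal{U}_i$ of $\mu$-length at most $D$. Each lifting step strictly decreases the length, and as long as the length stays in $[\epsilon,D]$ it shrinks by the uniform factor $\lambda(\epsilon,D)$; after boundedly many steps the length falls below $\epsilon$ and remains so. Hence $\operatorname{diam}_\mu V<\epsilon$ for $n$ large, which is the topological-expansion condition of Definition~\ref{defn:TopExp}.
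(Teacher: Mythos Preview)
Your construction of $\mathcal M$ and $\mathcal M'$ is essentially the paper's --- and in fact that construction is the \emph{entire} content of the paper's proof: it chooses neighbourhoods $\mathcal U_a\ni a$ with $f(\mathcal U_a)$ compactly contained in $\mathcal U_{f(a)}$, sets $\mathcal M=S^2\setminus\bigcup_a\mathcal U_a$ and $\mathcal M'=f^{-1}(\mathcal M)$, and stops there, leaving the shrinking of connected components of $f^{-n}(\mathcal U_i)$ as understood. So in attempting Steps~2 and~3 you are being more thorough than the paper.

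That said, your Arzel\`a--Ascoli argument in Step~2 has a genuine gap, one you flag but do not close. From $\operatorname{length}_\mu(\gamma_k)\ge\epsilon$ you \emph{cannot} conclude that the uniform limit $\gamma_\infty$ is non-trivial: length is only lower semi-continuous, and zigzags of fixed length $\epsilon$ may collapse uniformly to a point. Even if non-triviality is arranged separately (e.g.\ by pinning endpoints at distance $\ge\epsilon$), lower semi-continuity gives only $\operatorname{length}(\gamma_\infty)\le\liminf\operatorname{length}(\gamma_k)$ and $\operatorname{length}(\tilde\gamma_\infty)\le\liminf\operatorname{length}(\tilde\gamma_k)$; combined with $\operatorname{length}(\tilde\gamma_k)/\operatorname{length}(\gamma_k)\to 1$ these do not force $\operatorname{length}(\tilde\gamma_\infty)=\operatorname{length}(\gamma_\infty)$. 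You would need upper semi-continuity of length somewhere, which a bare length metric does not give.

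A cleaner route avoids curves and works with distances. If geodesics exist (e.g.\ if $\mu$ is complete, or locally after restricting to a suitable compact), strict length-expansion yields strict distance-expansion $d(f(a),f(b))>d(a,b)$ for $a\neq b$ in $\mathcal M'$. If components of $f^{-n}(\mathcal U_i)$ did not shrink, a diagonal limit would produce $a\neq b$ in $\mathcal M'$ with $d(f^k(a),f^k(b))$ non-decreasing and bounded, hence convergent; accumulating the orbit pair gives $(a',b')$ with $d(f(a'),f(b'))=d(a',b')>0$, a contradiction. Alternatively, if $\mu$ is Riemannian --- as in every metric the paper actually constructs --- the pointwise inequality $f^*\dd s<\dd s$ is automatically uniform on the compact $\mathcal M'$, and your Step~3 goes through without Step~2.
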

\begin{proof}
  Let $f\colon(S^2,A)\selfmap$ be metrically expanding. For each point
  $a\in A'$ choose an open neighborhood $\mathcal U_a\ni a$ such that
  $f(\mathcal U_a)$ is compactly contained in $\mathcal U_{f(a)}$. Set
  $\mathcal M=S^2\setminus\bigcup\mathcal U_a$ and
  $\mathcal M'=f^{-1}(\mathcal M)$.
\end{proof}

\noindent The goal of this section is to prove the following criterion:
\begin{thm}[Expansion Criterion]\label{thm:ExpCr}
  The following are equivalent, for a combinatorial equivalence class
  $\mathscr F$ of Thurston maps:
  \begin{enumerate}
  \item $\mathscr F$ contains a metrically B\"ottcher expanding map;\label{thm:ExpCr:1}
  \item $\mathscr F$ contains a topologically expanding map;\label{thm:ExpCr:3}
  \item $B(f)$ is an orbisphere contracting biset for every
    $f\in\mathscr F$;\label{thm:ExpCr:4}
  \item $\mathscr F$ does not admit a Levy cycle, and if $\mathscr F$
    is doubly covered by a torus endomorphism
    $M z+v \colon \R/\Z \selfmap$ then both eigenvalues of $M$ have
    absolute value greater than $1$. \label{thm:ExpCr:5}
  \end{enumerate}

  Furthermore, if any of these properties hold then the expanded
  metric may be assumed to Riemannian of pinched negative curvature.
\end{thm}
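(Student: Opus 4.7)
The plan is to close the cycle $(1)\Rightarrow(2)\Rightarrow(4)\Rightarrow(3)\Rightarrow(1)$, with the pinched-negative-curvature addendum falling out of the construction in the last step. The implication $(1)\Rightarrow(2)$ is Proposition~\ref{prop:metr=>top}. For $(2)\Rightarrow(4)$, suppose $f$ is topologically expanding with witness covering $\{\mathcal U_i\}$ of $\mathcal M'$, and assume for contradiction that $f$ admits a Levy cycle $L=(\gamma_0,\dots,\gamma_{n-1})$. By Proposition~\ref{prop:solidPerCycl}\eqref{prop:solidPerCycl:1} we may take $L$ solid, so $\gamma_0$ has an $f^{-n}$-preimage equal to $\gamma_0$, realized by a degree-$1$ homeomorphism. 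Iterating, the sequence $f^{-kn}(\gamma_0)$ contains, for every $k$, a representative homeomorphic onto $\gamma_0$, so its diameter cannot shrink, contradicting the defining property of $\{\mathcal U_i\}$. The torus-eigenvalue clause follows by lifting to the double cover and observing that an eigenvalue $|\lambda|\leq 1$ of $M$ produces a non-shrinking sequence of simple closed geodesics on the torus.

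The central implication is $(4)\Rightarrow(3)$. Fix a bounded orbisphere structure $\ord$ on $(S^2,A)$ with negative Euler characteristic, which exists because $f$ is non-torus, and denote by $\overline G$ the corresponding orbifold quotient of $\pi_1(S^2\setminus A,*)$. The aim is to show that $\overline G\otimes_G B(f)\otimes_G \overline G$ is contracting, which by the Nekrashevych criterion of~\S\ref{ss:bisets} amounts to proving that every right $B$-orbit $X^{\otimes n} g$ eventually falls into a finite nucleus. The strategy is to use hyperbolic length of conjugacy classes in $\overline G$ as a proxy for size and to show that iterated $B$-pullback strictly contracts this length outside a precompact set of short elements. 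Absence of Levy cycles is exactly the hypothesis that rules out fixed points of the curve-pullback action; combined with the non-torus assumption (which excludes neutral eigenvalues on Teichm\"uller space) this forces exponential contraction. Proposition~\ref{prop:HypLevyMCurvInter} and Corollary~\ref{cor:IntOfLevyCycl} enter to handle crossings of invariant multicurves during the induction along the Levy decomposition.

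For $(3)\Rightarrow(1)$, use the limit space construction of~\S\ref{ss:limit}. The limit orbispace $\Julia(B)$ is compact and carries a shift self-map $s$ that is metrically expanding in its natural topology. Filling in an attracting-basin disk around each point of $A^\infty$ turns $\Julia(B)$ into a topological orbisphere $\widehat{\mathcal M}$ on which the extended shift $\hat f$ is a B\"ottcher metrically expanding Thurston map, and the identification $B(\hat f)\cong B(f)$ yields the combinatorial equivalence with $f$. To obtain a Riemannian pinched-negative-curvature metric, uniformize $(\widehat{\mathcal M},\ord)$ away from $A^\infty$ by the hyperbolic orbifold metric induced by $\ord$, glued to standard parabolic cusps at each point of $A^\infty$; contraction of $B$ translates into pointwise contraction of this metric via an orbifold Schwarz lemma applied to the partial covering $\hat f\colon\widehat{\mathcal M}\setminus\hat f^{-1}(A^\infty)\to\widehat{\mathcal M}\setminus A^\infty$.

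The hard step is $(4)\Rightarrow(3)$: converting the purely topological statement ``no Levy cycle'' into the algebraic statement ``finite nucleus''. Pointwise, each element whose pullback length fails to shrink can be localized onto an invariant multicurve which, by Proposition~\ref{prop:HypLevyMCurvInter}, must carry a Levy cycle --- but converting this pointwise non-shrinking into a globally finite set of non-shrinking group elements (a putative infinite nucleus) demands uniformity. The non-torus assumption supplies the necessary spectral gap. The pinched-curvature addendum is a smaller but delicate gluing step near $A^\infty$, matching the hyperbolic orbifold geometry on $\widehat{\mathcal M}\setminus A^\infty$ to the local normal form $z\mapsto z^{\deg_a(\hat f^n)}$ at each cusp.
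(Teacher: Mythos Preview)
Your cycle $(1)\Rightarrow(2)\Rightarrow(4)\Rightarrow(3)\Rightarrow(1)$ reorders the paper's $(1)\Rightarrow(2)\Rightarrow(3)\Rightarrow(4)\Rightarrow(1)$, and in doing so pushes the hard content into two steps that, as written, both have genuine gaps.

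\textbf{The step $(3)\Rightarrow(1)$ via limit space and Schwarz lemma does not work when the canonical obstruction is nonempty.} You propose to uniformize $(\widehat{\mathcal M},\ord)$ by its hyperbolic orbifold metric and apply an orbifold Schwarz lemma to $\hat f$. But the Schwarz lemma requires $\hat f$ to be holomorphic for the chosen complex structure, and this is exactly what fails in the obstructed case: by Thurston's theorem there is \emph{no} complex structure on $(S^2,A)$ making $f$ isotopic to a rational map when $\CC_f\neq\emptyset$, yet such maps can be Levy-free (e.g.\ Shishikura--Tan's degree-$3$ obstructed mating). For these maps the limit space is still a topological sphere and the shift is topologically expanding, but you cannot extract a Riemannian pinched-negative-curvature metric by uniformization. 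The paper handles precisely this case by decomposing along $\CC_f$, putting separate hyperbolic metrics on the small spheres (where the return maps \emph{are} rational, by Selinger's Theorem~\ref{thm:CanDecomp}), and plumbing the pieces together with long nearly-flat cylinders whose parameters are governed by Proposition~\ref{prop:CombGl}. That construction is the substance of the theorem and cannot be replaced by a global Schwarz lemma.

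\textbf{The step $(4)\Rightarrow(3)$ is not a proof.} You assert that hyperbolic length of conjugacy classes is contracted under pullback outside a compact set, but hyperbolic length with respect to \emph{which} structure? Absent a single invariant conformal structure (again, the obstructed case), there is no canonical metric to measure against, and the Teichm\"uller iteration does not converge. Your invocation of Proposition~\ref{prop:HypLevyMCurvInter} and Corollary~\ref{cor:IntOfLevyCycl} is decorative: those results concern how Levy multicurves intersect, not contraction of arbitrary group elements. The paper does not attempt $(4)\Rightarrow(3)$ directly; it builds the expanding metric first via $(4)\Rightarrow(1)$, and contraction of the biset then follows from $(1)\Rightarrow(2)\Rightarrow(3)$.

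Your $(2)\Rightarrow(4)$ argument is fine and essentially equivalent to the paper's $(3)\Rightarrow(4)$, which shows directly that a Levy cycle forces an infinite nucleus.
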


We will prove Theorem~\ref{thm:ExpCr} for maps not doubly covered by
torus endomorphisms. The remaining case follows from
\cite{haissinsky-pilgrim:algebraic}*{Theorem~4} or from
\cite{selinger-yampolsky:geometrization}.  The hardest implication in
the proof is \eqref{thm:ExpCr:5}$\Rightarrow$\eqref{thm:ExpCr:1}, and
will occupy most of this section.
\begin{proof}[Proof of Theorem~\ref{thm:ExpCr}, \eqref{thm:ExpCr:1}$\Rightarrow$\eqref{thm:ExpCr:3}$\Rightarrow$\eqref{thm:ExpCr:4}$\Rightarrow$\eqref{thm:ExpCr:5}]\label{sec:ProofthmExpCr}
  The implication \eqref{thm:ExpCr:1}$\Rightarrow$\eqref{thm:ExpCr:3}
  follows from Proposition~\ref{prop:metr=>top}.
  By~\cite{bartholdi-h-n:aglt}*{Proposition~6.4}, the biset of a
  topologically expanding map is contracting; this is  \eqref{thm:ExpCr:3}$\Rightarrow$\eqref{thm:ExpCr:4} with slight adjustments to sphere maps.

  Consider next a combinatorial equivalence class $\mathscr F=[f]$
  admitting a Levy cycle
  $(\gamma_0,\gamma_1,\dots,\gamma_n=\gamma_0)$. Write
  $G=\pi_1(S^2\setminus A,*)$, consider the $G$-$G$-biset $B(f)$, and
  choose a basis $X$ for it. The assumption that $(\gamma_i)_i$ is a
  Levy cycle means that there exist basis elements
  $x_0,x_1,\dots,x_n=x_0\in X$ with $x_i\gamma_{i+1}=\gamma'_i x_i$
  and $\gamma'_i$ conjugate to $\gamma_i$ for all $i\in\Z/n$. In
  particular, for every $j\in\Z$ there is a conjugate of $\gamma_0^j$
  in the nucleus of $(B(f),X)$. Now $\gamma_0$ has infinite order in
  $G$, because it is not peripheral. It follows that the nucleus of
  $(B(f),X)$ is infinite, so $B(f)$ is not orbisphere contracting.
\end{proof}

\begin{proof}[Outline of the proof of Theorem~\ref{thm:ExpCr}, \eqref{thm:ExpCr:5}$\Rightarrow$\eqref{thm:ExpCr:1}]
  We wish to prove that a Levy-free non-torus Thurston map $f$ admits
  an expanding metric. We do so by explicitly constructing the metric
  adapted to $f$.

  We consider the \emph{decomposition} of $S^2$ into \emph{small
    spheres} along the canonical obstruction $\CC_f$. The map $f$
  restricts to maps between the small spheres, well-defined up to
  isotopy; and the \emph{small Thurston maps} --- the return maps to
  small spheres --- are combinatorially equivalent to rational maps.

  We first isotope the periodic small spheres so into complex spheres,
  in such a manner that the small Thurston maps are rational. We put
  the hyperbolic metric on these periodic small spheres, and pull it
  back to preperiodic small spheres.

  It remains to attach the small spheres together. They are spheres
  with cusps; some of the cusps correspond to the marked set $A$, and
  some to $\CC_f$. Cut the cusps corresponding to $\CC_f$ along a very
  small horocycle, and connect the small spheres by very long and thin
  cylinders along the combinatorics of the original decomposition. We
  have constructed a space $X$ with a piecewise-smooth non-positively
  curved metric.

  Define a self-map $F\colon X\selfmap$ as follows: away from the
  truncated cusps, apply the original map $f$. Subdivide the long
  cylinders into long ``annuli'' and short ``annular spheres''. Map
  the annular spheres to the small spheres they originally mapped to,
  and map the annuli affinely to each other.

  The map $F$ is expanding: on periodic small spheres, because it is
  modelled on rational maps; on preperiodic small spheres, too; on
  annular and trivial small spheres, because they are contained in
  thin cylinders; and on annuli because of properties of the canonical
  obstruction: it contains neither Levy cycles nor primitive unicycles.
\end{proof}

\subsection{Conformal metrics}
Recall first that every Riemannian metric $s$ on a surface (for
example a sphere) admits local isothermal coordinates; i.e.\ there is
a local chart $\mathcal U$ where $\dd s$ takes form $\rho(z)|\dd z|$
on the tangent space of $\mathcal U$; the function
$\rho\colon \mathcal U\to\R_+$ should be smooth. A metric in this form
is called \emph{conformal}. The Gaussian curvature
$\kappa\colon \mathcal U \to\R$ is given by
\[\kappa(z)=\frac{-\Delta(\log\rho(z))}{\rho(z)^2},
\]
by an easy calculation (see
e.g.~\cite{griffiths-harris:pag}*{page~77}). We
note for future reference the following simple calculation: if
$\rho(z)=\sigma(|z|)$ is rotationally invariant around $0\in  \mathcal U$
in the chart $z$, then
the Gaussian curvature may be computed as
\begin{equation}\label{eq:polarkappa}
  \kappa(z)=-\frac{\log(\sigma)''+\log(\sigma)'/|z|}{\sigma(|z|)^2}.
\end{equation}

We shall consider conformal metrics $s$ on an orbisphere
$(S^2,A)$. This means that in a suitable coordinates we have
2$\dd s=\rho(z)|\dd z|$ with $\rho\colon S^2\setminus A\to \R_{+}$
that has continuous extension
$\rho\colon S^2\to \R_{+}\cup \{+\infty\}$ such that if for $a\in A$

\begin{itemize}
\item $\rho(a)<+\infty$, then $\rho$ is smooth at $a$ (i.e.~$a$ is a
  usual point);
\item $\rho(a)=+\infty$ but $a$ at finite distance from points in
  $S^2$, then $(S^2,s)$ around $a$ is a quotient of a chart
  $\mathcal U$ endowed with a conformal metric under a finite group of
  isometries; the point $a$ is called a \emph{cone singularity}.
\end{itemize}
If $\rho(a)=+\infty$ and $a$ at infinite distance from points in
$S^2$, then $a$ is called a \emph{cusp}.

\subsection{Fatou and Julia sets}\label{ss:fatou}
We adapt the definition of Julia sets from~\eqref{eq:julia} to
expanding Thurston maps. We recall some well-known facts, and include
their proofs for convenience.
\begin{defn}
  Let $f\colon(S^2,A)\selfmap$ be an expanding Thurston map. Its \emph{Julia
    set} $\Julia(f)$ is the closure of the set of repelling
  periodic points, namely the closure of the set of points $z\in S^2$
  with $f^n(z)=z$ for some $n>0$ but admitting no neighbourhood
  $\mathcal U\ni z$ with $f^n(\mathcal U)$ compactly contained in
  $\mathcal U$.

  The \emph{Fatou set} $\Fatou(f)$ is the locus of continuity of
  forward orbits, namely the set of $z\in S^2$ at which the orbit map
  $S^2\to (S^2)^\infty,z\mapsto(z,f(z),f^2(z),\dots)$ is continuous in
  supremum norm (of any metric on $S^2$ realizing its topology).
\end{defn}

\begin{lem}\label{lem:JulFatDecomp}
  $S^2=\Julia(f)\sqcup\Fatou(f)$. Moreover, in the notation of
  Definition~\ref{defn:TopExp} the Julia set $\Julia(f)$ is the set of
  points in $\mathcal M'$ that do not escape $\mathcal M'$ under iteration of $f$.
\end{lem}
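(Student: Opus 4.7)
The plan is to identify $\Julia(f)$ with the compact forward-invariant set $K \coloneqq \bigcap_{n\ge 0} f^{-n}(\mathcal M') \subseteq \mathcal M'$ of points whose $f$-orbit never escapes $\mathcal M'$, and to deduce the decomposition from that identification. Concretely, I would prove three things: (a) $S^2 \setminus K \subseteq \Fatou(f)$; (b) $K \subseteq \Julia(f)$; and (c) $K \cap \Fatou(f) = \emptyset$. Together these force $K = \Julia(f)$ and $S^2 = \Julia(f) \sqcup \Fatou(f)$.

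For (a), a point $z \in S^2 \setminus K$ has some iterate $f^m(z)$ lying in a component $\mathcal U$ of $S^2 \setminus \mathcal M$, which by Definition~\ref{defn:TopExp} is a disc attracted to a unique $a \in A'$ at which the first return of $f$ is locally conjugate to $w \mapsto w^{\deg_a(f^n)}$. A whole neighbourhood of $z$ is then eventually mapped into $\mathcal U$ and converges uniformly to the $A'$-cycle of $a$, so the orbit map is continuous at $z$ and $z \in \Fatou(f)$.

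For (b), I need to show that every $z \in K$ has repelling periodic points in every open neighbourhood $U \ni z$. By topological expansion, the components of $f^{-n}(\mathcal U_i)$ have diameters tending to $0$ as $n \to \infty$. Since $f\colon \mathcal M' \to \mathcal M$ is a covering and $z \in \mathcal M$, each $z$ has $f^n$-preimages in $\mathcal M'$; pick an index $i$ with $z \in \mathcal U_i$, pick a preimage of $z$, and let $W_n$ be the component of $f^{-n}(\mathcal U_i)$ containing it. For $n$ large one has $W_n \subseteq U$, whence the corresponding branch of $f^{-n}$ maps $\mathcal U_i$ into $U$ and restricts to a continuous self-map of the compact set $\overline{U} \cap \mathcal U_i$; Brouwer's fixed-point theorem then produces a fixed point $p \in U$ of $f^n$, i.e.\ a periodic point of $f$ in $U$. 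If some neighbourhood $V \ni p$ satisfied $f^n(V) \subseteq V$, then $V \subseteq f^{-kn}(V)$ for all $k$, contradicting the diameter-contraction of backward iterates; hence $p$ is repelling. Thus repelling periodic points are dense in $K$, giving $K \subseteq \Julia(f)$. Finally for (c), the same contraction of backward iterates implies, dually, that forward iterates separate nearby points of $K$ by a definite amount, so the orbit map cannot be continuous at any $z \in K$; equivalently, $\Fatou(f)$ is open and disjoint from the dense set of repelling periodic points in $K$.

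Combining (a)--(c) yields the inclusions $K \subseteq \Julia(f) \subseteq S^2 \setminus \Fatou(f) \subseteq K$, so $K = \Julia(f)$ and the decomposition $S^2 = \Julia(f) \sqcup \Fatou(f)$ follows. The main obstacle is part (b): one must simultaneously use topological expansion to shrink a backward component inside $U$, extract a genuine periodic point by a fixed-point argument, and deduce its repelling character from the same expansion; the rest of the argument is essentially formal once the attracting-basin structure near $A'$ is unpacked from Definition~\ref{defn:TopExp}.
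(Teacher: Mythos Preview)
Your overall architecture matches the paper's: (a) is the first paragraph of the paper's proof, (c) is the second, and (b) is the third. Parts (a) and (c) are fine.

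The gap is in (b). You fix $i$ with $z\in\mathcal U_i$, then ``pick a preimage of $z$'' under $f^n$ and let $W_n$ be the component of $f^{-n}(\mathcal U_i)$ containing that preimage, asserting $W_n\subseteq U$ for large $n$. Expansion only gives $\operatorname{diam}(W_n)\to 0$; it says nothing about \emph{where} $W_n$ sits. An arbitrary $f^n$-preimage of $z$ has no reason to lie near $z$, so $W_n$ need not meet $U$ at all, and your Brouwer step then produces a periodic point in $W_n$ but not in $U$. (The follow-up sentence, that the inverse branch ``restricts to a continuous self-map of $\overline U\cap\mathcal U_i$'', likewise needs $W_n\subset\mathcal U_i$, which is equally unjustified.)

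The paper avoids this by working with the forward orbit of $z$ rather than with preimages. Pulling back an $\varepsilon$-ball around $f^n(z)$ \emph{along the orbit of $z$} gives a shrinking neighbourhood of $z$ itself --- this is exactly the mechanism you already use in (c) --- so any small closed disc $\mathcal V\ni z$ eventually satisfies $f^n(\mathcal V)\supset\mathcal V$, and that inclusion yields a periodic point in $\mathcal V$. To repair your argument you must anchor the shrinking component at $z$, which means tracking $f^n(z)\in K$ rather than an uncontrolled point of $f^{-n}(z)$. Your final argument that the periodic point so obtained is repelling is correct.
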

\begin{proof}
  By definition, every point $z$ escaping $\mathcal M'$ is in the attracting
  basin of $A'$, so $z$ has a stable orbit and $z\in
  \Fatou(f)$. Conversely, suppose that $z$ does not escape $\mathcal M'$. Fix a
  metric on $S^2$ realizing its topology and consider $\varepsilon>0$
  such that for every $\mathcal V\subset \mathcal M$ with diameter less than
  $\varepsilon$ the components of $f^{-n}(\mathcal V)$ get arbitrarily
  small as $n\to\infty$. Choose a large $n\in \N$ and consider the
  $\varepsilon$-neighborhood $\mathcal V\subset \mathcal M$ of $f^{n}(z)$. The
  pullback of $\mathcal V$ along the orbit of $z$ is a small (since
  $n$ is large) neighborhood $\mathcal V'$ of $z$; so there are points
  close to $z$ that have orbits $\varepsilon$-away from the orbit of
  $z$.  This shows that $z\not\in\mathcal F(f)$.

  Choose now a small closed topological disc $\mathcal V$ containing
  $z$. There is an $n\ge 1$ such that
  $f^n(\mathcal V) \supset \mathcal V$. Therefore, there is a periodic
  point in $\mathcal V$. This shows that $z\in\Julia(f)$.
\end{proof}

The Fatou set of $f$ is open. Every periodic component of
$\mathcal F(f)$ contains a attracting periodic point called the
\emph{center}; this point belongs to $A'$. By
Lemma~\ref{lem:JulFatDecomp} every non-periodic component of
$\mathcal F(f)$ is preperiodic because it consists of points escaping
to $S^2\setminus \mathcal M$. We may now deduce that every component of
$\mathcal F(f)$ is an open topological disc.

Consider first a periodic connected component $O$ of $\Fatou(f)$ and
let $a\in A'\cap O$ be its center. There is a conjugacy from $O$ to
the open disk $\mathbb D\subset\C$ such that the first return map
$f^n\colon O\to O$ is conjugate to the map $z^{\deg_a(f^n)}$. We write
$\deg_O(f)\coloneqq\deg_a(f)$, and call the conjugacy
$\phi_O\colon O\to\mathbb D$ a \emph{B\"ottcher coordinate}. We may
then determine coordinates on every Fatou component on the forward and
backward orbit of $O$ in such a manner that, for every Fatou component
$U$, the restriction $f\restrict U\colon U\to f(U)$ is conjugate to a
monomial map by
$\phi_{f(U)}\circ f\restrict U=z^{\deg_U(f)}\circ\phi_U$.

We use B\"ottcher coordinates to define, in every Fatou component $O$,
\emph{internal rays} $R_{O,\theta}\subset O$ by
\[R_{O,\theta}=\phi_O^{-1}\{r e^{2i\pi\theta}\mid r<1\}.
\]
These rays are mapped to each other by
$f(R_{O,\theta})=R_{f(O),\deg_O(f)\theta}$.  The following statement
follows immediately from the existence of B\"ottcher coordinates:
\begin{lem}\label{lem:Bottcher extension}
  Let $f\colon(S^2,A)\selfmap$ be a B\"ottcher map, and let $a\in A$
  be a degree-$d$ attracting point. Let $F$ denote its immediate basin
  of attraction; then $F$ is a connected component of the Fatou set of
  $f$. Let $\overline D$ denote the compactification of
  $S^2\setminus\{a\}$ by adding a circle of directions in replacement
  of $a$; then $f$ extends continuously to a self-map of
  $\overline D$, such that the boundary circle is mapped to itself by
  $z\mapsto z^d$.\qed
\end{lem}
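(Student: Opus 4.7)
The plan is to use the B\"ottcher coordinate $\phi_F\colon F\to\mathbb D$ constructed in the paragraph preceding the lemma as an explicit chart near $a$ in which $f$ becomes $z\mapsto z^d$, and then to extend this chart model to the oriented real blow-up of $a$.

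I would first identify $F$ with a Fatou component. Since $f$ is B\"ottcher expanding, the first return at $a$ is locally conjugate to $z\mapsto z^{\deg_a(f)}$; as the boundary circle of $\overline D$ is required to self-map by $z\mapsto z^d$, we are in the case $a$ is fixed with $\deg_a(f)=\deg(f)=d$, so $f^{-1}(a)=\{a\}$ and $f$ restricts to a self-map of $S^2\setminus\{a\}$. A neighborhood of $a$ is therefore contained in $\Fatou(f)$, and $\phi_F$ extends the local conjugacy to a global conjugacy of $f|_F$ with $z\mapsto z^d$ on $\mathbb D$. This makes $F$ open, connected, and entirely attracted to $a$, so $F\subseteq\Fatou(f)$. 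Conversely, by Lemma~\ref{lem:JulFatDecomp} any orbit in the Fatou component $O$ containing $a$ must eventually leave $\mathcal M'$ and hence enter the attracting basin of $a$; since $O$ is connected and contains $a$, this gives $O\subseteq F$. Hence $F=O$ is a Fatou component.

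To extend $f$ continuously to $\overline D$, I would observe that the oriented real blow-up of $a\in S^2$ is intrinsic: any chart $\psi$ sending $a$ to $0\in\mathbb C$ identifies a neighborhood of the added boundary circle with $\{(r,\theta)\colon 0\le r<\varepsilon,\,\theta\in\mathbb R/2\pi\mathbb Z\}$ via $(r,\theta)\mapsto\psi^{-1}(re^{i\theta})$, and any homeomorphism of $(\mathbb C,0)$ lifts uniquely to the blow-up. Taking $\psi=\phi_F$, the restriction of $f$ to this neighborhood is literally $z\mapsto z^d$, which extends tautologically by $(r,\theta)\mapsto(r^d,d\theta)$; continuity at $r=0$ is immediate, and on the boundary circle the action is $\theta\mapsto d\theta$, i.e., $z\mapsto z^d$. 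Gluing this local extension with the already-continuous self-map of $S^2\setminus\{a\}$ yields the required continuous self-map of $\overline D$.

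The only step deserving any care is the chart-independence of $\overline D$ (equivalently, the functoriality of the oriented real blow-up under germs of homeomorphisms fixing a point), which is standard. Once it is granted, the proof reduces to the explicit computation of $z\mapsto z^d$ in B\"ottcher coordinates, which is why the paper treats the lemma as immediate from the existence of $\phi_F$.
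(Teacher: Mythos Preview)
Your approach is exactly what the paper intends: the lemma carries a bare \qed\ because, as the paper says, it ``follows immediately from the existence of B\"ottcher coordinates''; you have simply unpacked this by using $\phi_F$ as a chart near $a$ in which $f$ becomes $z\mapsto z^d$, so that the extension to the oriented real blow-up is tautological, and your argument that $F$ is a Fatou component is fine.

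One small point of logic to clean up: you write that $\deg_a(f)=\deg(f)=d$ follows ``as the boundary circle of $\overline D$ is required to self-map by $z\mapsto z^d$,'' but that condition only pins down the \emph{local} degree. What actually forces $f^{-1}(a)=\{a\}$ is the stronger requirement that $f$ be a self-map of $\overline D$ at all: if there were another preimage $b\neq a$, then $f$ could not be extended continuously at $b\in\overline D$, since different directions of approach to $b$ land on different points of the added circle. So total invariance of $a$ is really an implicit hypothesis of the lemma (satisfied in its only application, to polynomials at $a=\infty$), not something you can derive from the stated hypotheses. With that adjustment your proof is complete.
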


\subsection{Canonical obstructions and decompositions}
We shall make essential use of Pilgrim's \emph{canonical
  decomposition}. Let $f\colon (S^2,A)\selfmap$ be a Thurston
map. Then there is an induced pullback map $f^*$ on the Teichm\"uller
space $\mathscr T_A$ of complex structures on $(S^2,A)$,
see~\cite{bartholdi-dudko:bc2}*{\S\ref{bc2:ss:examples}}; for a given
complex structure $\eta$, the pullback $f^*\eta$ is defined such that
the map $f\colon (S^2,A,f^*\eta)\to(S^2,A,\eta)$ is holomorphic. The
map $f$ is combinatorially equivalent to a rational map if and only if
$f^*$ has a fixed point.

Let $\gamma$ be an essential simple closed curve and let
$\eta\in \mathscr T_A$ be a complex structure. The length
$\langle\gamma , \eta\rangle$ of $\gamma$ with respect to $\eta$ is
defined as the length of the unique geodesic in $(S^2,A,\eta)$ that is
homotopic to $\gamma$. This defines an analytic function
$\langle\gamma,{-}\rangle\colon \mathscr T_A\to \R$. 

\begin{defn}[Canonical obstruction~\cite{pilgrim:combinations}*{Theorem~1.2}]
  Let $f\colon (S^2,A)\selfmap$ be a Thurston map, and consider
  $\eta \in T_A$.

  The \emph{canonical obstruction} $\CC_f$ is the set of homotopy
  classes of essential simple closed curves $\gamma$ such that
  $\langle\gamma,f^{n*}\eta\rangle$ tends to $0$ as $n$ tends to
  infinity.
\end{defn}
It follows from the following theorem that the definition of $\CC_f$
does not depends on $\eta$. It was proved by Kevin Pilgrim that
$\CC_f$ is a multicurve.

\begin{thm}[Pilgrim, \cite{pilgrim:cto}]\label{th:kevin_can_obstr}
  If $\CC_f$ is empty and the degree of $f$ is at least $2$, then $f$
  is combinatorially equivalent to a rational map.\qed
\end{thm}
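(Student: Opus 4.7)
The plan is to show that if $\CC_f=\emptyset$, the Thurston pullback orbit $\eta_n\coloneqq f^{n*}\eta$ stays in a compact subset of the moduli space $\mathscr M_A\coloneqq\mathscr T_A/\operatorname{MCG}(S^2,A)$; then by extracting a limit and verifying it is a fixed point of $f^*$ modulo the mapping class group, Thurston's characterization will give that $f$ is combinatorially equivalent to a rational map.

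First I would apply Mumford's compactness criterion: a subset of $\mathscr M_A$ is precompact if and only if there is a uniform positive lower bound on the hyperbolic length of every essential simple closed curve. So the problem reduces to showing that $\inf_n\langle\gamma,\eta_n\rangle>0$ for every essential $\gamma$ whenever $\CC_f=\emptyset$.

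The technical heart, and the step I expect to be hardest, is to show that any curve $\gamma$ satisfying $\langle\gamma,\eta_{n_k}\rangle\to 0$ along some subsequence must be an iterated $f$-preimage of a curve in $\CC_f$. Here I would use the Thurston linear transformation $f_{\mathcal D}$ on the real span of a multicurve $\mathcal D$ collecting the ``short'' curves, together with the fact that $f^*$ pulls hyperbolic length back with bounded multiplicative distortion away from cusps, via standard collar-lemma estimates. This implies that the collection of curves whose length decays along the orbit is essentially forward-invariant under components of $f^{-1}$; finiteness of the curve complex (at most $\#A-3$ disjoint classes at a time) then forces a periodic cycle $\mathcal D$ whose Thurston transition matrix has spectral radius $\geq 1$. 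By the definition of $\CC_f$, such a $\mathcal D$ is contained in $\CC_f$, contradicting the hypothesis. The subtle point is promoting ``pinching along a subsequence'' to ``pinching along the full sequence,'' which is what makes the limit cycle honestly periodic rather than merely recurrent; this is handled by combining Perron--Frobenius bounds for $f_{\mathcal D}$ with the monotonicity properties of the Weil--Petersson degenerate metric on augmented Teichm\"uller space.

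Once precompactness is established, I would extract a convergent subsequence $[\eta_{n_k}]\to[\eta_\infty]$ in $\mathscr M_A$ and invoke the weak contraction of $f^*$ with respect to the Teichm\"uller metric (as in the Douady--Hubbard proof of Thurston's theorem): along the orbit, successive Teichm\"uller distances $d(\eta_n,\eta_{n+1})$ are non-increasing, and any accumulation point is then a fixed point modulo $\operatorname{MCG}$. Lifting from $\mathscr M_A$ back to $\mathscr T_A$ produces a genuine fixed point of $f^*$, equivalently a complex structure on $(S^2,A)$ for which $f$ becomes holomorphic, completing the proof.
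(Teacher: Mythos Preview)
The paper does not give a proof of this theorem: it is cited from Pilgrim and stated with a terminal \qed, so there is no argument in the paper to compare your proposal against.

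As a standalone reconstruction of Pilgrim's argument your outline has the right shape (precompactness of the pullback orbit in moduli space via a short-curve analysis, then the Douady--Hubbard contraction to produce a fixed point), but two steps are understated. First, the sentence ``By the definition of $\CC_f$, such a $\mathcal D$ is contained in $\CC_f$'' is not justified by the definition used here: $\CC_f$ is \emph{defined} as the set of curves whose hyperbolic length tends to zero along the full orbit, so exhibiting a cycle whose Thurston matrix has spectral radius $\ge 1$ does not place it in $\CC_f$ by fiat --- you still need the estimate that such a cycle forces lengths to zero, and that is precisely the content of Pilgrim's paper rather than a definitional triviality. Second, ``lifting from $\mathscr M_A$ back to $\mathscr T_A$ produces a genuine fixed point of $f^*$'' is not automatic: a fixed point in moduli space is only a point whose $f^*$-image differs by a mapping class. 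The Douady--Hubbard argument proceeds differently: once the orbit is precompact in $\mathscr M_A$, the non-increasing sequence $d_{\mathscr T}(\eta_n,\eta_{n+1})$ together with strict contraction of $f^*$ on the thick part forces the full sequence $(\eta_n)$ to be Cauchy in $\mathscr T_A$, and its limit there is the fixed point. You gesture at this, but the logical order matters and the subsequential-to-full-sequence promotion happens at this stage, not earlier.
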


For $f$ a Thurston map, its \emph{canonical decomposition} is the
collection of spheres and annuli obtained by cutting $f$ along the
canonical obstruction $\CC_f$. Recall that the \emph{small Thurston
  maps} are the return maps of $f$ to the small spheres in a
decomposition.

\begin{thm}[Pilgrim, Selinger~\cite{selinger:augts}]\label{thm:CanDecomp}
  Every small Thurston map in the canonical decomposition of $f$ is
  either
  \begin{itemize}
  \item combinatorially equivalent to a rational non-Lattes
    post-critically finite map;
  \item double covered by a torus endomorphism;
  \item \phantombullet{or}a homeomorphism.
  \end{itemize}
\end{thm}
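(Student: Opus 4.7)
The plan is to reduce the statement to Pilgrim's Theorem~\ref{th:kevin_can_obstr} applied separately to each small Thurston map in the decomposition. Concretely, let $g$ be a small Thurston map on some periodic cycle $S_0 \to S_1 \to \cdots \to S_p = S_0$ of small spheres cut out by $\CC_f$. I would first aim to show that the canonical obstruction $\CC_g$ of $g$ is empty. Once this is established, the trichotomy follows quickly: if $\deg(g) = 1$ then $g$ is a homeomorphism by definition; if $\deg(g) \ge 2$, Theorem~\ref{th:kevin_can_obstr} gives that $g$ is combinatorially equivalent to a postcritically finite rational map, and the classical Lattès dichotomy separates maps doubly covered by a torus endomorphism from the non-Lattès ones.

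The heart of the argument is the vanishing $\CC_g = \emptyset$, which I would prove by contradiction. Suppose there is an essential simple closed curve $\gamma$ on some $S_i$ with $\langle \gamma, g^{n*}\eta \rangle \to 0$ for a reference complex structure $\eta$ on the cycle. I would build a family $\tau_t \in \mathscr T_A$ in which the curves of $\CC_f$ are pinched (their hyperbolic lengths tend to $0$ as $t \to \infty$) while the conformal structures induced on the small spheres converge to $\eta$. Regarding $\gamma$ as an essential curve in $(S^2, A)$ that is disjoint from $\CC_f$, the collar lemma together with an extremal-length comparison between curves living in the thick part and curves on the limiting small spheres would then show that the lengths $\langle \gamma, f^{n*}\tau_t \rangle$ become arbitrarily small along an appropriate subsequence. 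This would force $\gamma \in \CC_f$, contradicting that $\gamma$ was essential inside a small sphere of the decomposition.

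The main obstacle is making this limiting comparison rigorous, because the orbit $\{f^{n*}\eta\}_{n\ge 0}$ diverges in $\mathscr T_A$; only its projection to moduli space converges, and the limit object is a nodal Riemann surface. The correct framework is Selinger's augmented Teichmüller space~\cite{selinger:augts}: the accumulation set of $\{f^{n*}\eta\}$ lies on the stratum corresponding to pinching exactly $\CC_f$, and $f^*$ extends continuously to this stratum. The key technical step is to show that on this stratum the extended pullback dynamics decouples into a product of the Teichmüller pullbacks of the small Thurston maps $g$, and that Pilgrim's convergence result applies factorwise to produce a fixed point on each factor — equivalently, $\CC_g = \emptyset$ for each $g$. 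Granted this decoupling, the three cases in the statement follow from Thurston's classification applied to each small map individually.
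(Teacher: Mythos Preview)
The paper does not prove this theorem; it is quoted as a result of Pilgrim and Selinger, with the remark immediately following the statement that it ``was conjectured by Kevin Pilgrim (who also proved a slightly weaker version of this theorem, see~\cite{pilgrim:combinations}*{page~13}) and was eventually proved by Nikita Selinger.'' So there is no in-paper proof to compare your proposal against.

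That said, your sketch is a fair outline of Selinger's actual argument in~\cite{selinger:augts}: extend $f^*$ continuously to the Weil--Petersson completion of $\mathscr T_A$ (the augmented Teichm\"uller space), observe that the iterates $f^{n*}\eta$ accumulate on the stratum where exactly $\CC_f$ is pinched, and show that on that stratum the pullback factors through the pullbacks of the small maps, so each small map has empty canonical obstruction and Theorem~\ref{th:kevin_can_obstr} applies. The step you flag as the main obstacle --- the continuous extension and the decoupling on the boundary stratum --- is indeed the substantive content of Selinger's paper; your sketch correctly identifies it but does not supply it, so as written this is an outline rather than a proof.
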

Theorem~\ref{thm:CanDecomp} was conjectured by Kevin Pilgrim (who also
proved a slightly weaker version of this theorem,
see~\cite{pilgrim:combinations}*{page~13}) and was eventually proved
by Nikita Selinger.

\subsection{Construction of the model}\label{subsec:ProofOfThmExpCrMain}
We give here the proof the
implication~\eqref{thm:ExpCr:5}$\Rightarrow$\eqref{thm:ExpCr:1}, by
constructing a negatively curved Riemannian metric on $X\simeq S^2$
and an expanding map $F\colon X\selfmap$ isotopic to $f$; see
Figure~\ref{Fig:ThExpCr} for an illustration of the
construction.

\begin{figure}
  \begin{tikzpicture}
    \def\leftsphere#1#2{+(0,0.1) .. controls +(180:#1/2) and +(0:#1/2) .. +(-#1,#2)
      .. controls +(180:#2) and +(180:#2) .. +(-#1,-#2)
      .. controls +(0:#1/2) and +(180:#1/2) .. +(0,-0.1) ++(0,0)}
    \def\rightsphere#1#2{+(0,0.1) .. controls +(0:#1/2) and +(180:#1/2) .. +(#1,#2)
      .. controls +(0:#2) and +(0:#2) .. +(#1,-#2)
      .. controls +(180:#1/2) and +(0:#1/2) .. +(0,-0.1) ++(0,0)}
    
    \fill[gray!30] (-1.5,0) \leftsphere{2.5}{0.8} (1.5,0) \rightsphere{2.5}{0.8};
    \fill[white] (-1.4,0) circle (2mm) (1.4,0) circle (2mm);
    \draw[very thick] (-1.5,0) node [xshift=-22mm] {$S_1$}
    \leftsphere{2.5}{0.8} +(0,0.1) -- +(3,0.1) +(0,-0.1) -- node [below] {$T$} +(3,-0.1) (1.5,0) node[xshift=22mm] {$S_2$}
    \rightsphere{2.5}{0.8};

    \fill[gray!30] (-1.8,2.5) \leftsphere{2.2}{0.8} ++(3.6,0)
    \rightsphere{2.2}{0.8}; \fill[gray!30] (-0.8,2.5) +(0,-0.1)
    .. controls +(0:0.4) and +(180:0.4) .. +(0.8,-0.5) .. controls
    +(0:0.4) and +(180:0.4) .. +(1.6,-0.1) -- +(1.6,0.1) .. controls
    +(180:0.4) and +(0:0.4) .. +(0.9,0.5) -- +(0.9,0.6) -- +(0.7,0.6)
    -- +(0.7,0.5) .. controls +(180:0.4) and +(0:0.4) .. +(0,0.1)
    {[rotate around={-90:+(0.7,-0.1)}] \leftsphere{0.5}{0.3}};
    \fill[white] (-1.6,2.5) circle (4mm) (-1.1,2.5) circle (4mm)
    (1.1,2.5) circle (4mm) (1.6,2.5) circle (4mm) (0,3.1) circle (1.2mm);

    \draw[very thick] (-1.8,2.5) node [xshift=-19mm] {$S'_1$}
    \leftsphere{2.2}{0.8} +(0,0.1) -- +(1,0.1) +(0,-0.1) -- +(1,-0.1) ++(1,0) node [xshift=8mm] {\small $S'_2$}
    +(0,0.1) .. controls +(0:0.4) and +(180:0.4) .. +(0.7,0.5) -- +(0.7,0.6)
    {node [xshift=1mm,yshift=4mm] {\tiny $S'''_1$} [rotate around={-90:+(0.7,-0.1)}] \leftsphere{0.5}{0.3}}
    +(0.9,0.6) -- +(0.9,0.5) .. controls +(0:0.4) and +(180:0.4) .. +(1.6,0.1)
    +(0,-0.1) .. controls +(0:0.4) and +(180:0.4) .. +(0.8,-0.5)
    .. controls +(0:0.4) and +(180:0.4) .. +(1.6,-0.1) ++(1.6,0)
    +(0,0.1) -- +(1,0.1) +(0,-0.1) -- +(1,-0.1) ++(1,0)
    node [xshift=19mm] {$S''_1$} \rightsphere{2.2}{0.8};

    \draw (-1.3,2.4) -- node[left] {$2:1$} (-0.4,0.1) \fwdarrowonline{0.5};
    \draw (1.3,2.4) -- node[right] {$2:1$} (0.4,0.1) \bckarrowonline{0.5};
  \end{tikzpicture}
  \caption{Illustration to the Proof of Theorem~\ref{thm:ExpCr}. The
    map $f$ is indicated by the arrows, and sends $S'_1,S''_1,S'''_1$
    to $S_1$ and $S'_2$ to $S_2$. We first define a metric on the
    periodic small spheres ($S_1$), then on the preperiodic small
    spheres ($S_2$), and finally on the annuli between them. This map
    could be Pilgrim's ``blow-up an arc'' map,
    see~\cite{bartholdi-dudko:bc0}*{\S\ref{bc2:ss:pilgrim}}.}
  \label{Fig:ThExpCr}
\end{figure}

\subsubsection{Setup}
\label{sss:Prf:setup} 
The space $X$ is constructed by \emph{plumbing} between cusped
spheres: we enlarge the cusps to make them almost cylindrical, and
then truncate them and glue them on their common boundary. Three
variables dictate the construction: first a parameter $w\llcurly 1$ is
chosen; the perimeters of the ``cylindrical parts'' will lie between
$\pi w$ and $2\pi w$. Then a parameter $\ell\ggcurly 1/w$ is chosen;
the cylindrical parts will all have length between $\ell$ and
$2\ell$. Finally, a parameter $\epsilon\llcurly 1/\ell$ is chosen; it
will be a final adjustment to the construction that makes the
curvature bounded by $-\epsilon^2$ from above.

The map $F$ is very close to a rational map on each small sphere and
is very close to an affine map on each cylinder connecting small
spheres. After the main part of construction is carried we obtain a
metric $\mu$ that is weakly expanded by $F$ (namely, $F$ does not
contract $\mu$) and a certain iteration of $F$ is expands $\mu$. In
Lemma~\ref{lem:PertMetric} we perturb $\mu$ infinitesimally to make
$F$ expanding.

\subsubsection{The canonical decomposition}
Throughout this section, we let $\CC=\CC_f$ denote the canonical
obstruction of the Thurston map $f\colon(S^2,A)\selfmap$, and we
denote by $\Sph$ the collection of small spheres (components of
$S^2\setminus \CC$) of the canonical decomposition; so that
\[S^2=\bigsqcup_{\gamma\in\CC}\gamma\cup\bigsqcup_{S\in\Sph}S.
\]
As in~\cite{bartholdi-dudko:bc2}, for $S\in \Sph$ we denote by
$\widehat S$ the corresponding topological sphere marked by the image
of $A\cap S^2$ and the boundary curves. The map $f$ induces a map
$f\colon\Sph\selfmap$, and for each $S\in\Sph$ a map
$f\colon \widehat S\to \widehat{f(S)}$, well-defined up to isotopy,
see~\cite{bartholdi-dudko:bc2}*{Lemma~\ref{bc2:lem:SmallMaps}}.
 
Recall that we assumed that $f$ is a non-torus map: a map that is not
finitely covered by a torus endomorphism.
\begin{lem}\label{lem:ExpThmFirstObserv}
  If $f\colon(S^2,A)\selfmap$ is a Levy-free non-torus Thurston map
  and $\CC_f$ is non-empty, then $\CC_f$ is an anti-Levy Cantor
  multicurve, and all small Thurston maps in the canonical
  decomposition of $f$ are equivalent to non-torus rational maps.
\end{lem}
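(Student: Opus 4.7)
The anti-Levy conclusion is immediate: a Levy cycle inside the $f$-invariant multicurve $\CC_f$ would be a Levy cycle of $f$, contradicting Levy-freeness. For the rest of the statement I would invoke Theorem~\ref{thm:CanDecomp} (Pilgrim--Selinger), which leaves three possibilities for each small Thurston map in the canonical decomposition: a post-critically finite rational non-Lattès map, a map doubly covered by a torus endomorphism, or a homeomorphism. The plan is to rule out the latter two possibilities and then to deduce from the resulting structure that $\CC_f$ is Cantor.

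Homeomorphism small maps are quickly excluded: if a periodic cycle of small spheres $S_0\to\cdots\to S_p=S_0$ had a homeomorphism as its return map, the boundary curves of $\widehat S_0$ would be cyclically permuted by $f^p$ with each boundary covering of degree one, and the $f$-orbit of such a boundary curve would be a Levy cycle of $f$ lying inside $\CC_f$, contradicting Levy-freeness. The torus-covered case is the main obstacle. Such a small map $g$ inherits Levy-freeness from $f$, so by the Selinger--Yampolsky theorem cited below the Main Corollary $g$ lies in $\Tor$; in particular $g$ is expanding, has empty canonical obstruction, and carries a distinguished torus double cover. I would then argue that this torus double cover glues with the remaining pieces of the canonical decomposition into a torus double cover of all of $(S^2,A)$, contradicting the non-torus hypothesis on $f$. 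The delicate step is precisely this gluing: one must match the degrees, marked-point multiplicities, and cyclic boundary identifications prescribed by the $\Tor$ structure with the orbifold data on the adjacent pieces into a consistent global torus orbifold cover.

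For the Cantor conclusion, by Proposition~\ref{Prop:CantorMultCurv} it suffices to show that $\CC_f$ contains no primitive periodic unicycle. Suppose for contradiction that $C=(\gamma_0,\dots,\gamma_n=\gamma_0)$ were such a unicycle; by Proposition~\ref{prop:solidPerCycl}\eqref{prop:solidPerCycl:1} I may assume $C$ is solid, so that $\gamma_{i+1}$ has a unique preimage $\gamma'_i$ in $\CC_f$, isotopic to $\gamma_i$, of degree $d_i\coloneqq\deg(f\restrict{\gamma'_i})$. Because $C$ is not a Levy cycle some $d_i\ge 2$, and because $C$ is primitive as a strongly connected component, no further branch of $\CC_f$ feeds into $C$. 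I would then show by a degree and boundary-ramification count that the return map on the cycle of small spheres adjacent to $C$ carries no branching not already accounted for by $C$, and hence is necessarily either a homeomorphism or doubly covered by a torus endomorphism; both cases are excluded by the previous paragraph, yielding the desired contradiction and proving that $\CC_f$ is Cantor.
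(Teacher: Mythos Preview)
Your anti-Levy step and your exclusion of homeomorphism small maps are fine and match the paper. But the other two steps have genuine gaps.

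For excluding torus-covered small maps, the gluing argument does not work: there is no mechanism by which a torus double cover of one small sphere extends to a torus double cover of all of $(S^2,A)$, and the non-torus hypothesis on $f$ is a global statement with no bearing on individual pieces. The paper's argument is much simpler and is in fact the \emph{same} argument you already gave for homeomorphisms: a map doubly covered by a torus endomorphism is an orbifold self-covering of a $(2,2,2,2)$ sphere and hence has no attracting periodic point. The boundary-curve marked points of the small sphere are periodic under the return map, so the local degree there is $1$; equivalently, the corresponding curves in $\CC_f$ are mapped to each other by degree $1$ and form a Levy cycle, contradicting anti-Levy.

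For the Cantor conclusion, the ``degree and boundary-ramification count'' you propose does not constrain the adjacent small maps: the small spheres bordering a primitive unicycle $C$ can carry arbitrary branching entirely unrelated to $C$, so there is no reason they should be homeomorphisms or torus-covered. The paper's argument is completely different and uses the \emph{defining} property of $\CC_f$: its curves have geodesic length tending to $0$ under Thurston iteration. For a primitive non-Levy unicycle $C=(\gamma_0,\dots,\gamma_{n-1})$, primitivity means each $\gamma_i$ receives contributions only from within $C$, and the Thurston matrix restricted to $C$ is a cyclic permutation with entries $1/d_i$; since some $d_i\ge 2$, its spectral radius is $(\prod_i d_i)^{-1/n}<1$. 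Hence the lengths of the $\gamma_i$ stay bounded away from $0$, so $C\not\subset\CC_f$. Since every invariant multicurve is generated by its primitive unicycles and bicycles, the absence of primitive unicycles gives Cantor directly (Proposition~\ref{Prop:CantorMultCurv} is not what is needed here).
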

\begin{proof}
  Let us show that $\CC_f$ does not contain a primitive
  unicycle. Since a non-Levy unicycle has spectral radius strictly
  less than $1$, such a (primitive) cycle may not belong to $\CC_f$.

  Further, all small Thurston maps in $R(f,A,\CC)$ are non-torus and
  non-homeomorphisms, because torus and homeomorphism cycles can only
  be attached only via Levy cycles, because homeomorphisms and torus
  maps have no attracting periodic points. Theorem~\ref{thm:CanDecomp}
  concludes the proof.
\end{proof}

\subsubsection{Metrics on small spheres}
Consider a cycle of periodic spheres $S\to f(S)\to\dots\to
f^p(S)=S$. Let us denote by $\widehat S_i=\widehat{f^i(S)}$ the
topological sphere associated with $f^i(S)$ and we denote $A_i$ the
marked set of $S_i$. By Lemma~\ref{lem:ExpThmFirstObserv} the first
return map $f^p\colon \widehat S_i\selfmap$ is isotopic rel $A_i$ to a
rational map. Therefore, let us now assume that each $\widehat S_i$ is
a marked complex sphere and each $f_i\colon \widehat S_i\to S_{i+1}$
is a rational map. Choose next an orbifold structure
$\ord_i\colon A_i\to \{1,2,\dots, \infty\}$ such that
$f^p\colon (\widehat S_1,\ord_1)\selfmap$ is a partial self-covering
but is not a partial covering. We also choose $\ord_i$ in such a way
that $\ord_i(x)=\infty$ if and only if $x$ is in a periodic critical
cycle or $x$ is the image of a boundary curve.

We endow each $(\widehat S_i,\ord_i)$ with its natural hyperbolic
metric. Then every $f\colon \widehat S_i\to \widehat S_{i+1}$ is
either expanding (if it is not a covering) or an isometry (if it is a
covering); and $f^p\colon \widehat S_1\selfmap$ is expanding.
 
Similarly, we endow each preperiodic sphere $\widehat S'$, say marked
by $A'$, with a hyperbolic metric such that
$f\colon \widehat S'\to \widehat {f(S')}$ is either isometry or an
expanding map. The orbisphere structure
$\ord'\colon A'\to \{1,2,3,\dots, \infty\}$ is chosen so that
$\ord_i(x)=\infty$ if and only if $x$ is the image of a boundary
curve.
 
\subsubsection{Slight adjustment at cusps}\label{sss:AdjAtCusps}
For a periodic cycle $f\colon\widehat S_i\to \widehat S_{i+1}$ as
above consider a point $x\in \widehat S_i$ that is a cusp with respect
to the hyperbolic metric. Then a small neighbourhood of $x$ is
foliated by \emph{horocycles} --- curves perpendicular to geodesics
starting at $x$. We shall adjust locally the dynamics at cusps and
rescale there the hyperbolic metric by a factor of $1+\delta$ for
small $\delta$, in such a way that horocycles form an invariant
foliation of the new dynamics.

Suppose that $x\in S_1$ is periodic, say with period $q$.  Let
$\mathbb D^*\coloneqq \mathbb D\setminus \{0\}$ be the unit disc
punctured at $0$. Since $x$ is a cusp, the universal cover
$\mathbb D\to (S_1,\ord_1)$ factors as
$\mathbb D\to \mathbb D^*\overset{\pi_x}{\longrightarrow}
(S_1,\ord_1)$ with $\pi_x$ extended to $0$ by $\pi_x(0)=x$. Denote by
$H^*_r \subset \mathbb D^*$ the circle, i.e.~horocycle, centered at
$0$ with Euclidean radius $r$. For a sufficiently small $r$ the image
$H_r\coloneqq \pi_x(H^*_r)$ is a small simple closed curve around
$x\in \widehat S_1$.

Let $d>1$ be the local degree of $f^q$ at $x$ and let $U\subset S_1$
be the Fatou component containing $x$. Choose a B\"ottcher function
$B\colon U\to \mathbb D$ conjugating $f^p\colon U\selfmap$ to
$z\to z^d\colon \mathbb D\selfmap$. Denote by $E'_r \subset \mathbb D$
the circle centered at $0$ with Euclidean radius $r$. Then
$E_r\coloneqq B^{-1}(E')$ is an \emph{equipotential} of $U$. By
construction, $f^q(E_r)=E_{r/d}$.

Since $\pi_x$ and $B$ are conformal at $0$ there is a $\tau>0$ such
that $H_r$ approximates $E_{\tau r}$: for a sufficiently small $r$ the
horocycle $H_r$ lies in the $O(r^2)$-neighbourhood of $E_{\tau r}$ and,
moreover, the hyperbolic length of $E_{\tau r}$ is $-1/\log(r)+O(-r/\log r)$. (We
recall that $-1/\log(r)$ is the hyperbolic length of $H_r$.)

Choose now a small constant $\delta>0$ and a smooth function
$t\colon \R_{>0} \to \R_{>0}$ with $t(r)=1$ for $r\ge1$ and
$t(r)=1+\delta$ for $t<1/d$, such that the rescaled metric
$-t(r)/(r\log r) $ still has a negative curvature on $\mathbb D$. It
follows from~\eqref{eq:polarkappa} that $-t(r/R)/(r\log r)$ also has
negative curvature for all $R>1$. For a sufficiently large $R$ we
replace the hyperbolic metric around $x$ by $-t(r/R)/(r\log r)$ and we
adjust the dynamics of $f^q$ around $x$ such that $f^q$ maps $H_{r}$
to $H_{r/d}$ for all $r\le 1/R$. The adjustment is possible because
$f^q$ has expansion bounded away from $1$ around
$H_{1/R}\approx E_{\tau/R}$ with respect to the rescaled metric.

We now spread the adjusted dynamics along the orbit of $x$ as well as
to all preperiodic preimages of $x$ that are cusps with respect to
the hyperbolic metric. We perform the same operation at all cusps.

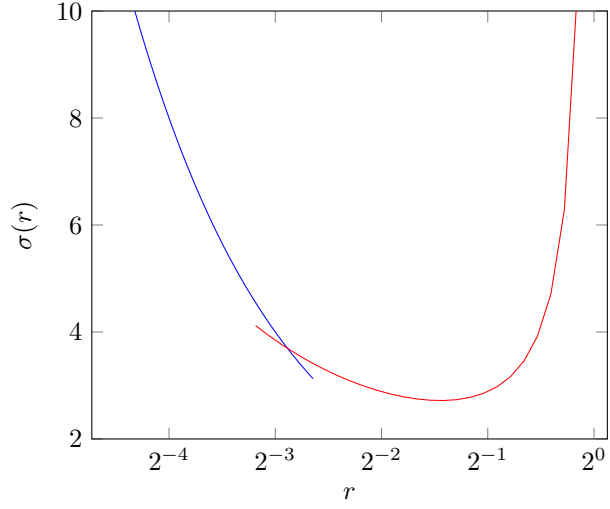
\begin{figure}
  \centering\begin{tikzpicture}
    \begin{semilogxaxis}[xlabel=$r$,ylabel=$\sigma(r)$,ymin=2,ymax=10,log basis x={2}]
      \addplot[domain=0.05:0.16,color=blue] {1/(2*x*cos(0.1*(ln(x))))};
      \addplot[domain=0.11:0.9,color=red] {-1/(x*ln(x))};
    \end{semilogxaxis}
  \end{tikzpicture}
  \caption{The curvature on the widened cusps}\label{fig:sigma}
\end{figure}

\subsubsection{Plumbing}\label{sss:Plumbing}
Let $S_1$ and $S_2$ be two hyperbolic small spheres with respective
cusps at $x_1\in S_1$ and $x_2\in S_2$. We now describe an operation,
\emph{plumbing}, that truncates $S_1$ and $S_2$ at $x_1$ and $x_2$
along their horocycles of perimeter $\approx 2\pi w$ and joins
$S_1,S_2$ along an almost flat cylinder with length $\approx \ell$
such that the resulting sphere still has a negatively curved
metric. Since $S_1$ and $S_2$ are covered by punctured discs, it is
sufficient to describe the operation between two copies
$\mathbb D^*_1, \mathbb D^*_2$ of the unit disc punctured at $0$. 

The hyperbolic metric on the unit disc punctured at $0$ is written as
$\sigma(|z|)|\dd z|$ with $\sigma(r)=-1/(r\log r)$. Replace $\{0<|z|<1\}$
by $\{\exp(-w\ell/2)\le|z|<1\}$, and give it a metric
$\sigma(|z|)|\dd z|$ with
\[\sigma(r)\approx\max\Big\{\frac1{w r\cos(\epsilon(\log(r)-w\ell/2))},\frac{-1}{r\log r}\Big\};
\]
see Figure~\ref{fig:sigma}. On that figure, the blue part
$1/(w r\cos(\epsilon(\log(r)-w\ell)))$ is a piece of the one-sheeted
hyperboloid of curvature $-\epsilon^2$, as can be readily checked
using~\eqref{eq:polarkappa}, with its unique minimal closed curve of
length $2\pi w$ appearing at radius $r=\exp(-w\ell/2)$, and with
length $\approx\ell/2$. The red part $-1/(r\log r)$ is the original
metric on the cusp. At $\approx-w\ell/2$ we replace $\sigma$ by a
smooth function that is slightly bigger than $\sigma (-w\ell/2)$; we
can do it such that $\log(\sigma)''\ggcurly 1$ at $\approx -w\ell/2$;
thus we guarantee that the new function still has a negative curvature
by~\eqref{eq:polarkappa}.

After the metrics on both cusps have been modified in the above
manner, they can be attached along their common boundary curve
$\{|z|=\exp(-w\ell/2)\}$, which is geodesic (it corresponds to the
core curve of the hyperboloid). The result is a space consisting of
two truncated discs with curvature $-1$ attached by a cylinder of
curvature $-\epsilon^2$, perimeter $\approx2\pi w$ and length
$\approx\ell$.

\subsubsection{Global metric}\label{sss:GlobMetr}
We now perform the plumbing between the metrized small spheres in
$\Sph$. The following proposition will allow us to endow the annuli of
the canonical decomposition with an expanding map.
\begin{prop}\label{prop:CombGl}
  There is an assignment
  \[\CC\to(1,2)\times(1,2),\qquad \gamma\mapsto(w_\gamma,\ell_\gamma)
  \]
  (where $w_\gamma$ is the ``width'' of the annulus corresponding to
  $\gamma$ and $\ell_\gamma$ is its ``length'') such that
  \begin{itemize}
  \item if for a non-peripheral curve $\delta\in f^{-1}(\CC)$ the map
    $f\colon \delta\to f(\delta)$ is one-to-one, then
    $w_{f(\delta)}>w_\delta$;
  \item if for a curve $\gamma\in\CC$ there is a unique non-peripheral
    curve $\delta\in f^{-1}(\CC)$ isotopic to $\gamma$, then
    $\ell_{f(\delta)}>\ell_\gamma$.
  \end{itemize}
\end{prop}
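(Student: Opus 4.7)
The plan is to reduce the Proposition to a topological-ordering statement about two auxiliary directed subgraphs of the pullback graph on $\CC=\CC_f$. Define $G_W$ to have an edge $\gamma\to\gamma'$ whenever there is a non-peripheral preimage $\delta\subseteq f^{-1}(\gamma)$ isotopic to $\gamma'\in\CC$ with $\deg(f\restrict\delta)=1$; and define $G_L$ to have an edge $\gamma''\to\gamma$ (from the full pullback graph on $\CC$) precisely when $\gamma$ has in-degree exactly $1$ in that full pullback graph. Once we show that both $G_W$ and $G_L$ are acyclic as directed graphs, the Proposition follows by picking any strict topological orderings of the vertex set $\CC$ compatible with each DAG and then choosing real values $w_\gamma,\ell_\gamma\in(1,2)$ that are strictly decreasing along the edges of the respective graph. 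Condition~(1) of the Proposition is then encoded exactly by decrease along $G_W$, and Condition~(2) by decrease along $G_L$.

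Acyclicity of $G_W$ is essentially the definition of Levy-freeness. A directed cycle $\gamma_0\to\gamma_1\to\dots\to\gamma_0$ in $G_W$ is, unwinding the definition, precisely a Levy cycle in $\CC$, and $\CC_f$ is anti-Levy by Lemma~\ref{lem:ExpThmFirstObserv}.

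Acyclicity of $G_L$ is the main point. Suppose for contradiction that $\gamma_0\to\gamma_1\to\dots\to\gamma_{n-1}\to\gamma_0$ is a cycle in $G_L$, so every $\gamma_i$ has in-degree $1$ in the full pullback graph on $\CC$, and this unique incoming edge comes from $\gamma_{i-1}$. Setting $C\coloneqq\{\gamma_0,\dots,\gamma_{n-1}\}$, the in-degree-$1$ hypothesis forces every incoming edge to each $\gamma_i$ to come from within $C$; iterating, any directed path in the pullback graph that terminates at some $\gamma_i$ must lie entirely in $C$. Hence $C$ is a strongly connected component and no other component $D\neq C$ satisfies $D\prec C$, i.e., $C$ is primitive. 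Moreover, the only edges of the pullback graph lying inside $C$ are the cycle edges themselves, so between any two vertices of $C$ there is at most one directed path of each length; thus $C$ is a unicycle. But Lemma~\ref{lem:ExpThmFirstObserv} asserts that $\CC_f$ contains no primitive unicycle, a contradiction.

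The only delicate step is verifying primitivity of $C$ in the $G_L$ argument, i.e., carefully tracking that the in-degree-$1$ hypothesis propagates backward along any incoming path and thereby confines the path to $C$. The rest of the argument --- constructing a strict topological order on a finite DAG and realizing it by values in $(1,2)$ --- is routine.
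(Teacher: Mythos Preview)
Your proof is correct and follows essentially the same route as the paper's: reduce to finding strict linear orders on $\CC$ compatible with two directed graphs, then observe that the obstruction to such an order in $G_W$ is a Levy cycle (ruled out since $\CC_f$ is anti-Levy) and the obstruction in $G_L$ is a primitive unicycle (ruled out since $\CC_f$ is Cantor), both by Lemma~\ref{lem:ExpThmFirstObserv}. Your argument is in fact more careful than the paper's at the one point that deserves it---the paper simply asserts that a cycle of forced $\ell$-inequalities ``is a primitive unicycle,'' whereas you spell out why the in-degree-$1$ hypothesis forces every incoming path to stay in $C$, yielding both primitivity and the unicycle property.
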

\begin{proof}
  We first note that only an ordering of the $(w_\gamma)$ and
  $(\ell_\gamma)$ is required; once such an ordering is found, they
  can easily be embedded in the interval $(1,2)$.

  If an assignment $\gamma\to w_\gamma$ is forbidden, then there is a
  sequence $\gamma_0,\gamma_1,\dots,\gamma_n=\gamma_0$ of curves in
  $\CC$ such that $w_{\gamma_{i+1}}>w_{\gamma_i}$ holds. This means
  that $\bigcup_i\partial \gamma_i$ contains a Levy cycle. This
  contradicts the assumption that $\CC_f$ is an anti-Levy multicurve,
  by Lemma~\ref{lem:ExpThmFirstObserv}.

  If an assignment $\gamma\to\ell_\gamma$ is forbidden, then there is
  a sequence $\gamma_0,\gamma_1,\dots,\gamma_n=\gamma_0$ of curves in
  $\CC$ such that $\ell_{\gamma_{i+1}}>\ell_{\gamma_i}$ holds. This
  means that $\bigcup_i\partial \gamma_i$ is a primitive unicycle, and
  this contradicts the assumption that $\CC_f$ is a Cantor multicurve,
  again by Lemma~\ref{lem:ExpThmFirstObserv}.
\end{proof}

We scale the solutions $(\ell_\gamma),(w_\gamma)$ given by
Proposition~\ref{prop:CombGl} so that $\ell \le \ell_\gamma \le 2\ell$
and $w/2\le w_\gamma\le w$, for the parameters
$\ell\ggcurly 1/w\ggcurly 1$ of the construction in~\S\ref{sss:Prf:setup}.

We consider in turn every small sphere $S\in\Sph$ containing a curve
$\gamma\in\CC$ on its boundary. There is then another small sphere
$S'\in\Sph$ also containing $\gamma$ on its boundary. For $\widehat S$
and $\widehat{S'}$, these boundary points appear as cusps in the
scaled hyperbolic metrics that were assigned to them
in~\S\ref{sss:AdjAtCusps}. We truncate the cusps on
$\widehat S,\widehat {S'}$ along horocycles and attach
$\widehat S,\widehat {S'}$ through an almost flat hyperboloid as
described in~\ref{sss:Plumbing}. The hyperboloid has a curvature
$\approx-\epsilon^2$, perimeter $\approx2\pi w_\gamma$ and length
$\approx\ell_\gamma$.

We have, in this manner, constructed a metric sphere $X\simeq (S^2,A)$
by plumbing together truncated small spheres in $\Sph$.  For every
$S\in \Sph$ we denote by $S^\circ$ the image of $\widehat S$ in
$X$. We also denote by $\Ann$ the set of almost flat annuli. We stress
that $\Ann$ is in bijection with $\CC$.

Suppose $B\in \Ann$ is an annulus connecting small spheres $S^\circ_1$
and $S^\circ_2$. Let $B_1$ be the subannulus of $B$ consisting of
points in $B$ that are closer to $S_1^\circ$ than $S_2^\circ$. Since
$B_1$ is constructed by enlarging the metric in
$\widehat S_1\setminus S^\circ_1$ we can view
$B\hookrightarrow \widehat S_1\setminus S^\circ_1$; we will refer to
this map as \emph{natural}. By construction,
\begin{lem}\label{lem:ExpNatMap}
  The natural map $B\to \widehat S_1\setminus S^\circ_1$ is
  contracting.  \qed
\end{lem}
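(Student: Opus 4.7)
By the construction in~\S\ref{sss:Plumbing}, the half-annulus $B_1$ coincides on the point-set level with the sub-punctured-disc $\{\exp(-w_\gamma\ell_\gamma/2)\le r<1\}$ of the cusp neighborhood $\widehat S_1\setminus S_1^\circ$, and carries the plumbing density $\sigma(r)\,|dz|$ which is by definition the pointwise maximum of the hyperbolic cusp density $-|dz|/(r\log r)$ and a hyperboloid density. In particular $\sigma(r)\ge -1/(r\log r)$ everywhere on $B_1$, so the identity inclusion $B_1\hookrightarrow\widehat S_1\setminus S_1^\circ$ (with source carrying the plumbing metric and target the hyperbolic metric) is non-strictly contracting. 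This accounts for the half of $B$ lying closest to $S_1^\circ$.

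To extend the natural map to all of $B$, I would send the opposite half-annulus $B_2$ into the deeper portion $\{0<r<\exp(-w_\gamma\ell_\gamma/2)\}$ of the cusp via a smooth decreasing diffeomorphism $\psi$ of the radial coordinate, matching the identification of $B_1$ on the core circle. Parameterizing the transverse direction in $B_2$ by the hyperboloid distance $s\in[0,\ell_\gamma/2]$ from the core, contraction amounts to a pointwise comparison of the hyperboloid metric on $B_2$ with the pullback of the hyperbolic cusp metric: in the angular direction the hyperboloid horocycle perimeter $2\pi w_\gamma\cosh(\epsilon s)$ must dominate the hyperbolic perimeter $2\pi/|\log\psi(s)|$ of the image circle, and in the transverse direction the pulled-back hyperbolic radial density must be dominated by the (essentially constant) hyperboloid transverse density. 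Both inequalities are satisfied by taking $\psi$ so that $|\log\psi(s)|$ grows like $\cosh(\epsilon s)/w_\gamma$, which is available because the cusp has infinite hyperbolic depth.

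The main technical point will be the compatibility of the radial and angular inequalities at the outer boundary $s=\ell_\gamma/2$ of $B_2$, where we need $|\log\psi(\ell_\gamma/2)|\ge 1/w_\gamma$ while simultaneously controlling the transverse derivative of $\psi$. Since both the hyperboloid and the pulled-back hyperbolic densities are monotone in $|s|$, it suffices to verify the inequalities at this endpoint; doing so requires $\ell_\gamma$ to be sufficiently large compared to $1/w_\gamma^2$, a constraint which is absorbed into the scale hierarchy $\epsilon\ll 1/\ell\ll w\ll 1$ fixed in~\S\ref{sss:Prf:setup}.
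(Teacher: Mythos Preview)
The paper offers no proof of this lemma: it is stated with an immediate \qed, preceded by the words ``By construction''. The intended content is exactly your first paragraph: on $B_1$ the plumbing density $\sigma$ was defined in~\S\ref{sss:Plumbing} as a pointwise maximum over the hyperbolic cusp density $-1/(r\log r)$, so the set-theoretic identity from the $\sigma$-metric to the hyperbolic metric is (non-strictly) contracting. That is the whole argument the paper has in mind.

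Your second and third paragraphs go beyond what the paper does. The sentence just before the lemma only justifies an inclusion of $B_1$, and in the two applications of the lemma (the extensions of $F$ over peripheral discs and over annular collars $U$) only the portion of $B$ adjacent to the relevant $S^\circ$ is actually used; so the appearance of $B$ rather than $B_1$ in the displayed statement is most likely a slip. Your proposed extension to $B_2$ is reasonable in outline but not quite right as written: the radial profile $|\log\psi(s)|\sim\cosh(\epsilon s)/w_\gamma$ gives $|\log\psi(0)|=1/w_\gamma$ at the core, which does not match the boundary value $w_\gamma\ell_\gamma/2$ inherited from the identity on $B_1$; and the claim that the resulting constraint ``$\ell_\gamma$ large compared to $1/w_\gamma^2$'' is absorbed into the hierarchy $\epsilon\llcurly 1/\ell\llcurly w\llcurly 1$ is false as stated, since that hierarchy only gives $\ell\ggcurly 1/w$. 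If you want to carry out the extension rigorously you should instead parametrise the target cusp by its hyperbolic arc-length and observe that the radial constraint becomes $|(\log u)'|\le 1$ with $u=|\log\psi|$, which allows $u$ to change by a multiplicative factor $e^{\ell_\gamma/2}$ over $B_2$; this is ample, and no condition beyond $\ell\ggcurly 1$ is needed.
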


\subsubsection{Dynamics at small spheres}
Recall that $X$ consists of truncated small spheres and of almost flat
cylinders connecting truncated spheres.

Consider first a small sphere $S\in \Sph$ and its $f$-image $S'$. We
have a rational map $f_S\coloneqq f\colon\widehat S\to
\widehat{S'}$. For all points in $S^{\circ}$ with $f_S$ image in
$S'^{\circ}$ we set $F$ to be $f_S$. The remaining points are bounded
by $f_S^{-1}(\partial S'^\circ)$. We now extend $F$ to $S^\circ$.

Consider a curve $\gamma\in f_S^{-1}(\partial S'^\circ)$. Then either
$\gamma$ is non-essential rel $A$ or $\gamma \in \CC$ rel $A$. In the first
case $\gamma$ bounds a peripheral disc $U$ containing at most one
point in $A$. By construction, see~\S\ref{sss:GlobMetr}, there is a
very long almost flat annulus $B\in \Ann$ attached to $F(\gamma)$.
Since $f_S\restrict U$ is expanding, we may extend $F$ to $U$, see
Lemma~\ref{lem:ExpNatMap}, in such a manner that $F\restrict U$ is
expanding. If there is an $a\in A\cap U$, then we require that
$F(a)=f(a)$ and that $F$ maps a neighbourhood of $a$ analytically
(i.e.~locally conformal except at $a$ where the map needs not be an
isomorphism) to a neighbourhood of $f(a)$.

Suppose that $\gamma$ is isotopic to a curve, say $\gamma_2$, in
$\partial S^\circ$. Denote bu $U$ the annulus between $\gamma$ and
$\gamma_2$. Let $B\in \Ann$ be the almost flat annulus attached to
$F(\gamma)$. We define $F$ on $U$ to be the composition of $f_S$ with
the inverse of the natural map from $B$ to
$\widehat S\setminus S^\circ$. In this manner we construct an
expanding extension of $F$ to $U$, see Lemma~\ref{lem:ExpNatMap}.

\subsubsection{Dynamics at annuli}
So far $F$ is defined on small spheres; let us assume that
$F\restrict S =f\restrict S$ for every $S\in \Sph$. We now extend $F$ to
$X\simeq (S^2,A)$ in an expanding manner so that $F\simeq f$.

Consider an annulus $B\in \Ann$. Suppose that $f$ maps $B$ to a
sequence of annuli and spheres $B_1,S_1,B_2,S_2,\dots, B_t$ with
$B_i\in \Ann$ and $S_i\in \Sph$. Consider two cases.

Suppose first $t=1$. Then $\ell_{B}-\ell_{B_1}\ggcurly 1$ because the
values $\ell_{B}> \ell_{B_1}$ from Proposition~\ref{prop:CombGl} are
rescaled so that $\ell_{B},\ell_{B_1}\ggcurly 1$. Also, either
$w_{B}>w_{B_1}$ or $w_{B}>w_{B_1}/2$ in case $f\restrict{B}$ has degree
greater than $1$. Therefore, we can map in an expanding manner $B$ to
$B_1$ minus a small (i.e.~of scale $\llcurly \ell$) neighbourhood of
$\partial B_1$ (which is already in the image of small spheres) in an
expanding manner so that the obtained map $F$ is isotopic to $f$ rel
$\partial B$. Indeed, identify $B$ and
$B_1\setminus (\text{small neighbourhood of }\partial B_1)$ with
$\mathbb S^1\times [0,1]$, recalling that $B,B_1$ are almost
flat. Then set $F$ to be $(x,y) \to (d x +m y, y )$, where $d\ge 1$ is
the degree of $f\restrict B$ and $m \ge 0$ is the twisting parameter. Since
$m$, $d$ are independent of $\ell \ggcurly 1 \ggcurly w$, the map
$F\restrict B$ is expanding.

Suppose next $t>1$. Subdivide $B$ into
$B'_1,S'_1,B'_2,\dots,S'_{t-1},B'_t$ so that each $S_i$ is an annulus
of length $\approx w$ and each $B'_j$ is an annulus of length
$\approx \ell/t$.  Again, since $\ell \ggcurly 1 \ggcurly w$ we can
define $F\restrict B \simeq f\restrict B$ in such a manner that $F$
expands $S'_i$ and $B'_j$ into $S_i$ and $B_j$ respectively.

\subsubsection{Perturbation of the metric}
We have constructed a metric space $(X,\mu)$ and a map
$F\colon X\selfmap$ which weakly ($\ge$) expands the metric, and such
that an iterate of $F$ is expanding.

\begin{lem}\label{lem:PertMetric}
  There is a small perturbation $\mu'$ of $\mu$ such that
  $F\colon X\selfmap$ expands $\mu'$.
\end{lem}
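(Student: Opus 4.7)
The plan is to use a Liv\v sic-style cohomological averaging to convert weak expansion into strict expansion by a small conformal change of $\mu$. By compactness of $X$ and continuity of $F$, the assumption that some iterate $F^n$ strictly expands $\mu$ supplies a uniform factor $\lambda>1$ with $(F^n)^*\mu\geq\lambda\,\mu$ pointwise on $X$. Equivalently, the nonnegative function $\beta\coloneqq\log(F^*\mu/\mu)$ satisfies $\sum_{k=0}^{n-1}\beta\circ F^k\geq\log\lambda$ everywhere.

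First I would form the averaged coboundary
\[h\coloneqq\sum_{k=0}^{n-1}\Big(1-\frac{k}{n}\Big)\,\beta\circ F^k\]
and set $\mu'\coloneqq e^{\delta h}\mu$ for a small parameter $\delta\in(0,1)$ to be chosen later. A direct telescoping computation gives $h\circ F-h=\frac{1}{n}\sum_{k=1}^{n}\beta\circ F^k-\beta$, hence
\[\log(F^*\mu'/\mu')=\beta+\delta(h\circ F-h)=(1-\delta)\,\beta+\frac{\delta}{n}\sum_{k=1}^{n}\beta\circ F^k.\]
Both summands are nonnegative, and the second is at least $(\delta/n)\log\lambda$ because $\sum_{k=1}^{n}\beta\circ F^k=\log((F^n)^*\mu/\mu)\circ F$. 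Thus $F^*\mu'\geq\lambda^{\delta/n}\mu'$ pointwise, so $F$ strictly expands $\mu'$ uniformly on $X$.

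Next I would check that for $\delta$ small the perturbation preserves pinched negative curvature. Under the conformal change $\mu'=e^{\delta h}\mu$ the Gaussian curvature transforms as $\kappa_{\mu'}=e^{-\delta h}\bigl(\kappa_\mu-\tfrac{\delta}{2}\Delta_\mu h\bigr)$, so once $h$ and $\Delta_\mu h$ are known to be bounded on $X$, a sufficiently small choice of $\delta$ keeps $\kappa_{\mu'}\leq -\epsilon^2/2<0$.

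The hard part will be the regularity and uniform boundedness of $h$ and $\Delta_\mu h$, which reduce to the same property for $\beta$. One must verify that $\beta$ extends smoothly across each piece of the construction: on each small sphere $F$ is an orbifold covering or expanding rational map between hyperbolic orbifolds, so $\beta$ is smooth there; on each almost-flat plumbing cylinder $F$ is affine, so $\beta$ is locally constant; and at each cusp the modification of~\S\ref{sss:AdjAtCusps} arranges $F$ to send $H_r$ to $H_{r/d}$ with a definite smooth expansion factor. The delicate point is along the truncation circles produced by the plumbing, where one must confirm that the piecewise data for $F$ assemble into a $C^2$ map before the curvature formula can be applied and $\delta$ chosen uniformly.
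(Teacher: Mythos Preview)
Your averaging scheme is sound \emph{provided} $\beta=\log(F^*\mu/\mu)$ is a bounded $C^2$ function on $X$, but it is not, and the obstacle is not where you locate it. The metric $\mu$ has cones and cusps precisely at $A$ and is smooth elsewhere; however $F^*\mu$ acquires a cone or cusp-type singularity at \emph{every} point of $F^{-1}(A)$, in particular at every unmarked preimage $z\in F^{-1}(A)\setminus A$. At such a $z$ the ratio $F^*\mu/\mu$ behaves like a power of the local coordinate (or like $|z\log z|^{-1}$ if $F(z)$ is a cusp), so $\beta$ has a genuine logarithmic singularity there. Consequently $h=\sum_{k<n}(1-k/n)\,\beta\circ F^k$ blows up on the finite set $\bigcup_{k\le n}F^{-k}(A)\setminus A$, the Laplacian $\Delta_\mu h$ is unbounded, and $\mu'=e^{\delta h}\mu$ fails to be a Riemannian orbifold metric on $(S^2,A)$: it grows extra cusps at unmarked points. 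Your regularity discussion examines only the plumbing circles and the cusps in $A^\infty$; the assertion that ``on each small sphere $F$ is an orbifold covering, so $\beta$ is smooth there'' is exactly what fails, since the cone orders match only at marked preimages.

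This is precisely what forces the truncation step in the paper's proof. The paper works additively rather than conformally: it sets $\mu_i=(F^i)^*\mu$, caps each $\mu_i$ by a large constant near $f^{-i}(A)\setminus A^p$ to obtain bounded $\bar\mu_i$ that are still weakly expanded by $F$, and defines $\mu'=\mu+\varepsilon\sum_{i=1}^{p-1}\bar\mu_i$. Near periodic points of $A$ all $\mu_i$ are conformal in common charts (because $F$ was arranged to be conformal there), so no truncation is needed and the sum is positive definite; away from $A^p$ positivity is immediate for small $\varepsilon$. Since $F^p$ strictly expands $\mu$, at every point $F$ strictly expands at least one $\mu_i$, hence $\mu'$. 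A secondary issue in your write-up: $X$ is not compact --- the points of $A^\infty$ are cusps at infinite $\mu$-distance --- so your appeal to compactness for a uniform factor $\lambda>1$ is unjustified as stated.
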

\begin{proof}
  Let $F^p\colon X\selfmap $ be an expanding iteration of $F$. By
  construction, $\mu$ is a smooth Riemannian metric such that $F$ is
  conformal (rel $\mu$) in a small neighbourhood of $A$.

  Denote by $A^\infty$ the set of periodic critical cycles of
  $F\restrict A\selfmap$. Recall that $A^\infty$ is the set of points
  at infinite distance from $X\setminus A^\infty$ for $\mu$. We also
  recall that cone points of $\mu$ belong to $A\setminus A^\infty$.

  For $i\le p-1$ consider the pulled-back metric
  $\mu_i\coloneqq (F^{-i})^*\mu$. Then $\mu_i$ is a Riemannian metric
  with cones in $f^{-i}(A\setminus A^\infty)$ and singularities in
  $f^{-i}(A^\infty)$. Moreover, $F$ weakly expands $\mu_i$.

  Write $\mu_i(z)$ as a conformal metric $\sigma_i(z) |\dd z|$ for
  $z\in X$ written in complex charts. For a sufficiently large $K>1$
  the inequality $\sigma_i(z)>K$ holds only in a small neighbourhood of
  $f^{-i} (A)$. Let $A^p\supset A^\infty$ be the set of periodic
  points in $A$. For sufficiently large $K$ and for $z$ close to
  $f^{-i}(A)\setminus A^p$ we define
  $\bar \sigma_i(z)\approx\min\{ \sigma_i(z), K\}$ so that $F$ still
  weakly expands the truncated metric
  $\bar \mu_i(z)= \bar \sigma_i(z) |\dd z|$. We leave $\sigma_i$
  unchanged away from the neighborhood of $A^p$.

  We claim that for a sufficiently small $\varepsilon>0$ the quadratic
  form
  \[\mu'\coloneqq \mu+(\bar\mu_1 +\dots + \bar\mu_{p-1})\varepsilon\]
  is positive define (i.e.~$\mu'$ is a metric) and that $F$ expands
  $\mu'$. Indeed, away from $A^p$ all $\bar \mu_{i}$ are finite
  metrics. Therefore, if $\varepsilon$ is sufficiently small, then
  $\mu'$ is positive definite away from $A^p$; so $\mu'$ is a
  metric. Since $F$ is conformal in a small neighbourhood of $A^p$ all
  $\bar \mu_i$ and $\mu$ are conformal metrics in a common
  charts. Hence $\mu'$ is positive-definite as a sum of conformal
  metrics.

  Since $F^p$ is expanding, $F$ expands at least one of
  $\mu, \bar\mu_1,\dots,\bar\mu_{p-1}$. Therefore, $F$ expands $\mu'$.
 \end{proof}

\subsection{Isotopy of expanding maps}
Let $f,g\colon (S^2,A)\selfmap$ be two expanding maps. Denote by
$\Fatou(f)$ and $\Fatou(g)$ the Fatou sets of $f$ and $g$
respectively. We may partially order the maps $f,g$ by declaring that
$g$ is ``smaller than'' $f$ if $A\cap\Fatou(g)\subset
A\cap\Fatou(f)$. In this sense, small maps are more expanding, and
B\"ottcher maps are maximal.

\begin{lem}\label{lem:ConjBetwExpMaps}
  Let $f,g\colon (S^2,A)\selfmap$ be two expanding maps with
  $A\cap\Fatou(f)=A\cap\Fatou(g)$. Then $f$ and $g$ are conjugate by
  $h\simeq \one$ if and only if $f\simeq g$.
\end{lem}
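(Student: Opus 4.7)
The trivial direction is clear: an ambient isotopy $h_t$ joining $\one$ to $h$ provides the isotopy $t\mapsto h_t\circ f\circ h_t^{-1}$ from $f$ to $g$, so $f\simeq g$.

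For the main direction, starting from $f\simeq g$ I would use the classical Thurston pullback argument. A combinatorial equivalence provides a homeomorphism $h_1\simeq\one$ rel $A$ with $h_1\circ f=g\circ h_0$, where $h_0\coloneqq\one$, together with an ambient isotopy $H_t^{(0)}$ from $h_0$ to $h_1$. Inductively, lifting $H_t^{(n-1)}$ through $g$ (using that $g$ is a branched covering with $A$ forward-invariant) produces a homeomorphism $h_{n+1}\simeq h_n$ rel $A$ satisfying $h_{n+1}\circ f=g\circ h_n$, together with an isotopy $H_t^{(n)}$ from $h_n$ to $h_{n+1}$.

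The central task is to show that $(h_n)$ converges uniformly to a homeomorphism $h$, using expansion. By Theorem~\ref{thm:ExpCr} I equip $g$ with a B\"ottcher expanding metric $\mu$, with associated compact retract $\mathcal M\subset S^2\setminus(A\cap\Fatou(g))$. On $\mathcal M$ the map $g$ uniformly expands $\mu$, so the tracks of the successive lifts $H_t^{(n)}$ have $\mu$-length decaying geometrically in $n$, and $(h_n)$ is uniformly Cauchy on $\mathcal M$. On each Fatou component attached to an attracting point $a\in A^\infty$, the assumption $A\cap\Fatou(f)=A\cap\Fatou(g)$ combined with the expansion of $f$ and $g$ forces matching local degrees $\deg_a(f)=\deg_a(g)$, so both maps are locally modeled on $z\mapsto z^{\deg_a(f)}$. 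I would then pre-normalize each $h_n$ to respect the B\"ottcher charts at every periodic Fatou component; the pullback propagates the normalization to all preperiodic Fatou components, and convergence inside each component follows from a Schwarz-lemma contraction.

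The main obstacle is matching these two regimes across the interface $\partial\mathcal M$: near the boundary of the attracting basins, the expansion of $\mu$ degenerates while the B\"ottcher contraction takes over. I would resolve this by choosing the B\"ottcher radii small enough that the cap cut out of each periodic Fatou component is mapped strictly into itself by $g^{-1}$; once a track of $H_t^{(n)}$ enters a cap it is trapped there and controlled by the Schwarz-lemma contraction, while tracks outside are handled by the uniform $\mu$-expansion. The limit $h\coloneqq\lim_n h_n$ is then continuous on $S^2$ and satisfies $h\circ f=g\circ h$ by passing the intertwining relation to the limit. It is a homeomorphism because any non-injectivity would collapse a non-degenerate continuum whose iterated $g$-preimages remain non-degenerate, contradicting the fact that diameters of $g^{-n}(\mathcal U_i)$ tend to zero (Definition~\ref{defn:TopExp}). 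Finally $h\simeq\one$ rel $A$ by concatenating the isotopies $H_t^{(n)}$, whose lengths form a convergent geometric series.
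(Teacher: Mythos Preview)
Your outline follows the same pullback strategy as the paper, but differs from it in two places, and in the second of these your argument is not complete.

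\textbf{B\"ottcher normalization.} You propose to pre-normalize each $h_n$ in B\"ottcher charts and then argue convergence on each Fatou component via a Schwarz-type contraction, worrying about matching this with the metric expansion across $\partial\mathcal M$. The paper does something simpler: adjust $h_0$ \emph{once} so that it already conjugates $f$ to $g$ in B\"ottcher coordinates near each periodic point of $A\cap\Fatou(f)$. Then $h_0=h_1$ on a fixed neighbourhood of these points, and since the lifting relation $h_nf=gh_{n-1}$ respects the local B\"ottcher models, \emph{all} $h_n$ coincide on that neighbourhood. No separate convergence analysis on the Fatou side, and no interface matching, is needed.

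\textbf{Homeomorphism.} The paper proves that the limit $h_\infty$ is a homeomorphism by a symmetric trick you do not use: from $h_nf=gh_{n-1}$ one gets $h_n^{-1}g=fh_{n-1}^{-1}$, so the very same pullback argument with the roles of $f$ and $g$ exchanged shows $h_n^{-1}\to h'_\infty$, and then $h'_\infty h_\infty=\one$ forces $h_\infty$ to be invertible. Your alternative (``any non-injectivity would collapse a non-degenerate continuum whose iterated $g$-preimages remain non-degenerate, contradicting \dots'') is not correct as written. A nontrivial fibre $C=h^{-1}(w)$ lives in the \emph{source}; the semiconjugacy $hf=gh$ gives $h^{-1}(g^{-n}(w))=f^{-n}(C)$, whose components \emph{shrink} under $f^{-n}$ since $f$ is expanding, so there is no contradiction from that direction. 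The genuine obstruction is on the \emph{forward} side: $f^n(C)$ stays inside a fibre of $h$ for every $n$, and one must argue that an expanding map cannot carry a non-degenerate continuum inside a uniformly controlled family of sets for all forward times. Making that precise requires knowing fibres of $h$ are connected and invoking expansivity carefully; the paper's symmetric argument bypasses all of this in one line.
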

Moreover, if $\#A\ge 3$, then $h$ is unique,
see~\cite{bartholdi-dudko:bc3}*{\S\ref{bc3:ss:ghost}}.
\begin{proof}
  We show that if $f, g$ are isotopic, then they are conjugate by
  $h\simeq \one$. This is an application of the pullback argument.

  Choose $h_0,h_1\simeq \one$ such that $h_1f = g h_0$. We adjust $h_0$
  so that it respects B\"ottcher coordinates around periodic points in
  $A\cap\Fatou(f)$. Thus $h_0$ is equal to $h_1$ in a small
  neighbourhood of $A\cap\Fatou(f)$.

  Let inductively $h_n$ be the lift of $h_{n-1}$; i.e.\
  $h_n f = g h_{n-1}$. By construction, all $h_n$ coincide in a small
  neighbourhood of $A\cap\Fatou(f)$.

  Since $f$ is expanding away from $A\cap\Fatou(f)$, the
  sequence $h_n$ tends to a continuous map
  $h_\infty\colon (S^2,A)\selfmap$ satisfying
  $h_\infty f = g h_\infty$.

  Observe now that we also have $h^{-1}_n g = f h^{-1}_{n-1}$. Since
  $g$ is expanding away from $A\cap\Fatou(g)$, the sequence $h_n^{-1}$
  tends to a continuous map $h'_\infty\colon (S^2,A)\selfmap$
  satisfying $h'_\infty g = f h'_\infty$. Clearly,
  $h'_\infty h_\infty=\one$; i.e.\ $h_\infty$ is a homeomorphism.
\end{proof}

For a Thurston map $f\colon (S^2,A)\selfmap$, a \emph{Levy arc} is a
non-trivial path, with (possibly equal) starting and ending point in
$A$, that is isotopic rel $A$ to one of its iterated lifts.  Let $A'$
be a forward-invariant subset of $A$. We say that $A'$ is
\emph{homotopically isolated} if there is no Levy arc connecting two
points in $A'$.

\begin{lem}\label{lem:HomIsolCond}
  Suppose that $f\colon (S^2,A)\selfmap$ is a B\"ottcher expanding
  map, that $A'\subset A\cap \Fatou(f)$ is forward invariant, and that
  $\Fatou'$ is the set of points in $\Fatou(f)$ attracted by
  $A'$. Then $A'$ is homotopically isolated if and only if the
  following properties hold:
  \begin{enumerate}
  \item if $O$ is a connected component of $\Fatou'$, then
    $\overline O$ is a closed topological disc and, moreover,
    $A\cap\partial O=\emptyset$;
  \item if $O_1,O_2$ are different connected components of
    $\Fatou'$, then $\overline O_1\cap \overline O_2=\emptyset$.
  \end{enumerate}
\end{lem}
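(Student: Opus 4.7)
The plan is to translate the topological conditions (1), (2) into ray-landing properties via B\"ottcher coordinates, and the absence of Levy arcs into metric expansion estimates. Since $f$ is B\"ottcher expanding, each component $O$ of $\Fatou'$ carries a B\"ottcher coordinate $\phi_O\colon O\to\mathbb D$, and by iterating Lemma~\ref{lem:Bottcher extension} every internal ray $R_{O,\theta}$ lands on $\partial O$, giving a continuous surjection $e_O\colon\R/\Z\to\partial O$ intertwining the angle dynamics $\theta\mapsto\deg_O(f)\theta$ with $f$. Thus $\overline O$ is a closed topological disc iff $e_O$ is injective, and $\overline{O_1}\cap\overline{O_2}\ne\emptyset$ iff some pair $(R_{O_1,\theta_1},R_{O_2,\theta_2})$ lands at a common point.

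For the implication $(\Leftarrow)$, assume (1) and (2) and suppose $\gamma$ is a Levy arc between $a_1,a_2\in A'$ with respective Fatou components $O_1,O_2$. By (1), after isotopy rel $A$ I may represent $\gamma$ as the concatenation of an internal ray $R_{O_1,\theta_1}$ from $a_1$ to some $p_1\in\partial O_1$, a middle arc $\gamma_{\mathrm{mid}}$ in $S^2\setminus(O_1\cup O_2)$ avoiding $A$ with endpoints on $\partial O_1$ and $\partial O_2$, and the reverse of an internal ray $R_{O_2,\theta_2}$ from $p_2\in\partial O_2$ to $a_2$. By (2) the endpoints $p_1,p_2$ of $\gamma_{\mathrm{mid}}$ lie at positive distance in the expanding metric of Theorem~\ref{thm:ExpCr}. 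A sufficiently high iterated lift of $\gamma$ stays in its isotopy class by the Levy property, yet expansion on the Julia set forces the length of the corresponding middle sub-arc to decay to zero, contradicting the positive lower bound.

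For $(\Rightarrow)$ I argue by contrapositive. If $e_O$ is non-injective at some $p\in\partial O$, take distinct angles $\theta_1\ne\theta_2$ with $e_O(\theta_i)=p$. The set of such landing identifications, considered up to isotopy rel $A$, is finite and invariant under the angle dynamics, so by pigeonholing iterates I may assume $(\theta_1,\theta_2)$ is periodic; the loop $R_{O,\theta_1}\cdot R_{O,\theta_2}^{-1}$ at the center $a\in A'\cap O$ is then isotopic to one of its iterated lifts, hence a Levy arc in $A'$. The analogous argument for two distinct components of $\Fatou'$ whose closures meet at a common point produces a Levy arc in $A'$ between their two centers. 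Finally, if $b\in A\cap\partial O$, passing to an iterate makes $b$ periodic and yields a Levy arc from the center $a\in A'\cap O$ to $b$; a local analysis at the repelling periodic point $b$ then shows that $b$ must also lie on the boundary of a second component of $\Fatou'$ whose center is in $A'$, reducing to the previous case.

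The main technical hurdle is the decay argument in $(\Leftarrow)$: one must show that every representative of the isotopy class of an iterated lift of $\gamma$ has a middle sub-arc of length bounded below, relying on the local connectivity of $\Julia(f)$ (guaranteed by expansion) and the disjointness condition (2). A secondary hurdle is the final step of $(\Rightarrow)$: showing that a marked boundary point of a single component of $\Fatou'$ is necessarily shared with a second such component, using finiteness of $A$ together with the expanding orbifold geometry of Theorem~\ref{thm:ExpCr}.
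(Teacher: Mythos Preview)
Your argument for $(\Rightarrow)$ has a genuine gap in the last case. When $b\in A\cap\partial O$, you produce a Levy arc from the center $a\in A'$ to $b$, but ``homotopically isolated'' requires a Levy arc between two points of $A'$, and nothing forces $b\in A'$. Your proposed fix, that $b$ must lie on the boundary of a second component of $\Fatou'$, is unjustified and in fact false: take $f(z)=z^2$ with $A=\{0,1,\infty\}$ and $A'=\{0\}$; then $\Fatou'=\mathbb D$ has a single component and $b=1\in A\cap\partial\mathbb D$ lies on no other boundary. The paper's remedy is to build a \emph{loop} at $a$: follow the periodic internal ray from $a$ to $b$, encircle $b$ once, and return along the same ray. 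This loop has both endpoints at $a\in A'$, is non-trivial because it winds around $b\in A$, and is isotopic to its own lift because the ray is periodic; it is therefore a Levy arc between two (coinciding) points of $A'$.

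You also gloss over non-triviality in the ray-pair case. Pigeonholing isotopy classes gives a periodic class, but you must rule out that this class is trivial. The paper handles this by first establishing $A\cap\partial O=\emptyset$ for every component of $\Fatou'$ (so that $f^n(R_1)\neq f^n(R_2)$ for all $n$, since coincidence would force a precritical point on a boundary), and then showing via the local degree at the center that if $f^m(R_1)\cup f^m(R_2)$ were trivial for all $m$ then $f^m$ would have degree one on the enclosed disc, contradicting the growth $\phi_m=\deg_a(f^m)\phi_0$ of the angle between the rays.

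For $(\Leftarrow)$ the paper takes the contrapositive and simply asserts that a Levy arc between points of $A'$ can be \emph{realized} as a concatenation of two internal rays $R_1,R_2$; the three failure modes of (1) and (2) then drop out of the three sub-cases $a\neq b$, $a=b$ with $R_1\neq R_2$, and $R_1=R_2$. Your length-shrinking argument is essentially the mechanism behind that realization (iterated lifting contracts the middle portion), but by framing it as a contradiction from (1) and (2) you incur extra work: you need a lower bound on the middle arc that is invariant under isotopy, and you do not treat the case $a_1=a_2$, where $O_1=O_2$ and condition~(2) gives no separation at all. Reorganizing your shrinking argument as ``the limit of the iterated lifts is a pair of rays'' handles all cases uniformly and matches the paper.
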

\begin{proof}
  Suppose first that $A'$ is not homotopically isolated. Let $\ell$ be
  a Levy arc connecting points $a,b\in A'$. Then $\ell$ can be
  realized as an inner ray $R_1$ followed by an inner ray $R_2$. If
  $a\not= b$, then the closures of the Fatou components centered at
  $a$ and $b$ intersect. If $a=b$ but $R_1\not=R_2$, then the closure
  of the Fatou component centered at $a$ is not a closed disc, since
  it is pinched at $a=b$. If $R_1=R_2$, then the landing point of
  $R_1$ belongs to $A$.

  Conversely, let us assume that $A'$ is homotopically isolated. We
  first verify that $A\cap\partial O=\emptyset$. Indeed, if
  $a\in A\cap\partial O$, then the internal ray $R$ of $O$ landing at
  $a$ is preperiodic. For $n$ large enough, the ray $f^n(R)$ is a
  periodic ray of $f^n(O)$ connecting its center, which is a point in
  $A'$, to $f^n(a)\in A$. Therefore, a loop starting at the center of
  $f^n(O)$, then following $f^n(R)$, then circling $f^n(a)$, and then
  following $f^n(R)$ back to the center of $f^n(O)$ is a Levy arc.

  If the conclusion of the lemma does not hold, then either there is a
  periodic component $O$ of $\Fatou'$ which is not a disk, and then
  there are two different inner rays $R_1,R_2$ of $O$ that land
  together; or there are two periodic connected components $O_1,O_2$
  of $\Fatou'$ and respective inner rays $R_1\subset O_1$ and
  $R_2\subset O_2$ that land together.

  If $R_1,R_2$ are inner rays of $O$ that land together, then we have
  $f^n(R_1)\neq f^n(R_2) $ for all $n\ge 0$. Indeed, otherwise the
  common landing point of $R_1,R_2$ would be precritical,
  contradicting $A\cap\partial O=\emptyset$. Furthermore, for all
  $n$ sufficiently large $f^n(R_1)\cup f^n(R_2) $ is a closed curve,
  non-null-homotopic rel $A$. Indeed, if $f^n(R_1)\cup f^n(R_2) $ were
  trivial for some $n$, then $f^m(R_1)\cup f^m(R_2)$ would be trivial
  for all $m\in \{0,1,\dots, n\}$. Let then $D_m$ be the open disc
  bounded by $f^m(R_1)\cup f^m(R_2) $ and not intersecting $A$.  We
  see that $f^m\colon D_0\to D_m $ has degree one. Denote by $\phi_m$
  the angle in $D_m$ between $f^m(R_1)$ and $f^m(R_2)$ measured at the
  center $f^m(a)$ of $f^m(O)$. Then $\phi_m=\deg_a(f^m)\phi_0$. Since
  $\phi_0>0$ because $R_1\neq R_2$, and $\deg_a(f^m)\to\infty$ as
  $m\to\infty$ because $O$ is a Fatou component, we see that
  $f^{n}\colon D_0\to D_n$ has degree greater than one for all
  sufficiently large $n$.

  In all cases, we obtain for some $n>m \ge0$ an arc
  $f^n(R_1)\cup f^n(R_2)$ that is isotopic to $f^m(R_1)\cup f^m(R_2)$
  $A$, so $f^n(R_1)\cup f^n(R_2)$ is a Levy arc.
\end{proof}

Suppose that $\sim$ is a closed equivalence relation on $S^2$ whose
equivalence classes are connected and filled-in (namely, with
connected complement) compact subsets of $S^2$ and suppose that not
all points of $S^2$ are equivalent. In this case Moore's
theorem~\cite{moore:sphere} states that the quotient space $S^2/{\sim}$
is homeomorphic to $S^2$.

\begin{cor}
  Suppose that $f\colon (S^2,A)\selfmap$ is a B\"ottcher expanding map
  and suppose that $A'\subset A\cap \Fatou(f)$ is a forward invariant
  homotopically isolated subset of $A$. Let $\Fatou'$ be the set of
  points in $ \Fatou(f)$ attracted by $A'$. Then the equivalence
  relation $\sim$ on $S^2$ specified by
  \[x\sim y\Longleftrightarrow \begin{cases}
      x=y \;\text{ or }\\
      x,y\text{ are in the closure of the same connected component of
      }\Fatou';
    \end{cases}
  \]
  is an $f$-invariant equivalence relation satisfying Moore's
  theorem. View $(S^2,A)/{\sim} \simeq (S^2,A)$. The induced map
  $f/{\sim}\colon (S^2,A)/{\sim} \selfmap $ is topologically expanding and is isotopic
  rel $A$ to $f$.
\end{cor}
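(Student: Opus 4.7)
The plan is to verify Moore's hypotheses for $\sim$, descend $f$ to a continuous branched self-covering $\bar f$, and then establish the isotopy $\bar f\simeq f$ together with topological expansion.

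By Lemma~\ref{lem:HomIsolCond}, each component $O$ of $\Fatou'$ has closure $\overline O$ a closed topological disc, disjoint from other such closures and from $A\setminus A'$. The equivalence classes of $\sim$ are thus either singletons or whole $\overline O$: compact, connected, with connected complement, and not all points are identified because $\Julia(f)\neq\emptyset$. The family $\{\overline O\}_{O\in\Fatou'}$ is null---only finitely many periodic components, and non-periodic ones are iterated preimages whose diameters shrink to zero by topological expansion---so $\sim$ is a closed equivalence relation and Moore's theorem gives $S^2/{\sim}\simeq S^2$. The relation is $f$-invariant because $A'$ is forward-invariant and $f$ maps Fatou components to Fatou components, with $f(\overline O)=\overline{f(O)}$ by continuity and surjectivity; hence $f$ descends to a continuous branched self-covering $\bar f\colon (S^2,A)/{\sim}\selfmap$ inheriting the covering structure of $f$ off the collapsed discs.

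For the isotopy rel $A$, fix a homeomorphism $\psi_O\colon\overline O\to\overline{\mathbb D}$ for each $O\in\Fatou'$ (using B\"ottcher coordinates on periodic $O$ and arbitrary ones on preperiodic) and build a family of self-homeomorphisms $q_t\colon S^2\selfmap$ for $t\in[0,1)$, supported in $\bigsqcup_O\overline O$, each isotopic to $\one$ rel $A$, that in the coordinate $\psi_O$ collapse the subdisc $\{|z|\le t\}$ progressively toward $0$ as $t\to 1$. Then $q_t\circ f\circ q_t^{-1}$ is a continuous path of Thurston maps whose limit at $t=1$ is $\bar f$, exhibiting $\bar f\simeq f$ rel $A$.

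For topological expansion, the key observation is that each periodic collapsed point $q(\overline O)$ is a new cusp of $\bar f$: a small disc neighborhood of $q(\overline O)$ in the quotient pulls back to a neighborhood of $\overline O$ in $S^2$, on which $f^p$ acts as a degree-$\deg_O(f^p)$ branched covering onto a smaller such neighborhood, so $\bar f^p$ descends locally to a branched covering of disc neighborhoods of $q(\overline O)$ topologically conjugate to $z\mapsto z^{\deg_O(f^p)}$. Declare $A'_{\bar f}$ to be the union of the surviving old cusps $q(A^\infty\setminus A')$ and these new periodic collapsed cusps, and take $\widetilde{\mathcal M}$ to be the complement in $S^2/{\sim}$ of small attracting discs around $A'_{\bar f}$; then $q$ is a homeomorphism on $q^{-1}(\widetilde{\mathcal M})$, and topological expansion of $\bar f$ on $\widetilde{\mathcal M}$ reduces to that of $f$ on the corresponding pullback compact.

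I expect the main obstacle to be the identification of the correct local model at the collapsed points---proving that the quotient of a degree-$d$ branched covering of disc neighborhoods of $\overline O$, with $\overline O$ collapsed to its centre, is again topologically a degree-$d$ branched covering of disc neighborhoods of $q(\overline O)$ conjugate to $z\mapsto z^d$. A secondary technical issue is the preimage-shrinking condition at strictly preperiodic collapsed points $q(\overline{O'})$, which are not cusps but must lie in $\widetilde{\mathcal M}$; this is handled by exploiting topological expansion of $f$ in the annular regions surrounding preperiodic Fatou components, whose $S^2$-diameters shrink to zero because non-periodic Fatou components do.
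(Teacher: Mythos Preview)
Your verification of Moore's hypotheses and of topological expansion is more careful than the paper's, which simply declares topological expansion ``clear'' and moves on. The substantive divergence is in the isotopy step: the paper argues algebraically, observing that the quotient map $q$ collapses only simply-connected closed discs and therefore induces an isomorphism of fundamental groups and of bisets, whence $f\simeq f/{\sim}$ by Kameyama's theorem. You instead try to exhibit an explicit path of Thurston maps.

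That explicit path has a genuine gap. Your homeomorphisms $q_t$ are supported in $\bigsqcup_O\overline O$, so each $q_t$ restricts to the identity on every boundary circle $\partial O$ (and on all of $\Julia(f)$). Consequently $q_t\circ f\circ q_t^{-1}$ agrees with $f$ on $\partial O$ for every $t<1$: on each boundary it is the degree-$\deg_O(f)$ covering $\partial O\to\partial f(O)$. But in the quotient $(S^2,A)/{\sim}$ the whole closed disc $\overline O$, boundary included, becomes a single point, so $\bar f$ sends the class of $\partial O$ to a single point. The pointwise limit of $q_t\circ f\circ q_t^{-1}$ along $\partial O$ is $f$, not a constant; hence the path does not converge to $\bar f$ under any identification $(S^2,A)/{\sim}\simeq(S^2,A)$, and in fact does not converge to any continuous map at all. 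To build a convergent path one must allow $q_t$ to move the complement of $\bigsqcup_O\overline O$ as well, so that $q_t$ approximates the (non-injective) quotient map $h\circ q$ uniformly; the composition then needs a separate argument, since $q_t^{-1}$ diverges. The biset route used in the paper sidesteps all of this.
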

\begin{proof}
  It is clear that $f/{\sim}$ is topologically expanding. If we view
  $(S^2,A)/{\sim} \simeq (S^2,A)$, then $f$ and $f/{\sim}$ have isomorphic
  bisets; therefore $f\simeq f/{\sim}$.
\end{proof}

\begin{prop}\label{prop:semiconjugate}
  Let $f,g\colon (S^2,A)\selfmap$ be two expanding maps such that
  $f\simeq g$ and $A\cap\Fatou(g)\subseteq A\cap\Fatou(f)$. Write
  $A'\coloneqq A\cap (\Fatou(f)\setminus \Fatou(g))$ and let $\Fatou'$
  be the set of points attracted towards $A'$ under iteration of $f$.

  Then there is a semiconjugacy $\pi\colon (S^2,A)\to (S^2,A)$ from
  $f$ to $g$ defined by
  \[\pi(x)=\pi(y)\Longleftrightarrow  \begin{cases}
 x=y \;\text{ or }\\
 x,y\text{ are in the closure of the same connected component of
    }\Fatou';
\end{cases}.\]
\end{prop}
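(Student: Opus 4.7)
My plan is to first handle the case where $f$ is B\"ottcher expanding, and then reduce a general expanding $f$ to this case using Theorem \ref{thm:ExpCr}. In the B\"ottcher case the key is to show that $A'$ is a forward-invariant homotopically isolated subset of $A\cap\Fatou(f)$, so that the corollary preceding this proposition applies. Forward-invariance is immediate: $f$ and $g$ act identically on $A$, and each of $A\cap\Fatou(f)$ and $A\cap\Fatou(g)$ is forward-invariant. For homotopical isolation, suppose $\ell$ is a Levy arc of $f$ with endpoints in $A'$. Since the property of being a Levy arc depends only on the isotopy class rel $A$ and $f\simeq g$, the arc $\ell$ is a Levy arc for $g$ too. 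But $A'\subseteq A\cap\Julia(g)$ by Lemma \ref{lem:JulFatDecomp}, so both endpoints lie at finite distance in $g$'s expanded metric; taking a geodesic representative $\ell^*$ of $\ell$ in that metric together with a lift of $\ell^*$ under an iterate $g^n$ that is isotopic rel $A$ to $\ell^*$, the strict expansion of $g$ forces this lift to be strictly shorter than $\ell^*$, contradicting the minimality of $\ell^*$ in its homotopy class.

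With homotopical isolation of $A'$ in hand, the corollary produces the equivalence relation $\sim$ that collapses closures of components of $\Fatou'$; it satisfies the hypotheses of Moore's theorem, with Lemma \ref{lem:HomIsolCond} supplying the disc property for each collapsed closure and pairwise disjointness of different ones. The quotient $f/{\sim}$ is topologically expanding, isotopic rel $A$ to $f$ (hence to $g$), and satisfies $A\cap\Fatou(f/{\sim})=A\cap\Fatou(f)\setminus A'=A\cap\Fatou(g)$. Lemma \ref{lem:ConjBetwExpMaps} then supplies a homeomorphism $h$ isotopic to the identity conjugating $f/{\sim}$ to $g$, and composing $h$ with the quotient projection $(S^2,A)\to (S^2,A)/{\sim}$ produces the semiconjugacy $\pi\colon f\to g$ of the statement, whose fibers are precisely the closures of components of $\Fatou'$ as claimed.

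For a general expanding $f$ that is not B\"ottcher, Theorem \ref{thm:ExpCr} provides a B\"ottcher expanding map $F$ in the combinatorial class of $f$ (and of $g$). Applying the B\"ottcher case to both $(F,f)$ and $(F,g)$ yields semiconjugacies $\pi_f\colon F\to f$ and $\pi_g\colon F\to g$; since the partition induced by $\pi_g$ refines that induced by $\pi_f$ (its collapsing set of $F$-Fatou components is larger), there is a unique continuous surjection $\pi\colon (S^2,A)\to(S^2,A)$ with $\pi_g=\pi\circ\pi_f$, and this is the required semiconjugacy, whose fibers chase to the closures of $f$-Fatou components attracted to $A'$. The main obstacle is the homotopical-isolation step: one must carefully translate the strict expansion of $g$ (possibly only topological) into a length- or diameter-contraction estimate strong enough to rule out any Levy arc with endpoints in the Julia set of $g$. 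In the purely topological case, lengths in the argument above are replaced by diameters of iterated preimages, which shrink to zero by topological expansion, yielding the same contradiction.
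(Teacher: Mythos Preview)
Your approach is correct and essentially the same as the paper's: reduce to $f$ B\"ottcher, show $A'$ is homotopically isolated using the expansion of $g$, invoke the corollary to collapse $\Fatou'$, then apply Lemma~\ref{lem:ConjBetwExpMaps}. In fact you fill in the homotopical-isolation step more explicitly than the paper does --- the paper merely writes ``By Lemma~\ref{lem:HomIsolCond} applied to $g$'', which is terse to the point of being cryptic, whereas your length/diameter contradiction for a putative Levy arc with endpoints in $\Julia(g)$ is the intended content of that sentence.
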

As in Lemma~\ref{lem:ConjBetwExpMaps}, the semiconjugacy $\pi$ is
unique.
\begin{proof}
  It sufficient to prove this proposition for the case when $f$ is a
  B\"ottcher expanding map.  By Lemma~\ref{lem:HomIsolCond} applied to
  $g$ we see that $A_g$ is homotopically isolated. Therefore, again by
  Lemma~\ref{lem:HomIsolCond} we can collapse $\Fatou'$ to obtain a
  topologically expanding map $f/\Fatou'$. Since $f/\Fatou'\approx g$,
  the claim now follows from Lemma~\ref{lem:ConjBetwExpMaps}.
\end{proof}

\section{Computability of the Levy decomposition}\label{ss:algo}
In this section, we give algorithms that prove
Corollaries~\ref{cor:decidegeom} and~\ref{cor:decidelevy}. 

Recall that a branched covering $f\colon (S^2,P_f,\ord_f)\selfmap$ is
doubly covered by a torus endomorphism if and only if $P_f$ contains
exactly four points and $\ord_f(P_f)=\{2\}$. Moreover, in this case
$f\colon (S^2,P_f,\ord_f)\selfmap$ is itself an orbifold self-covering
and its biset $B(f)$ is right principal. It is easy to see that
$G:=\pi_1(S^2,P_f,\ord_f)$ is isomorphic to
$\Z^2 \rtimes_{-\one} \Z/2$, and that $B(f)$ is of the following form:
for a $2\times 2$ integer matrix $M$ with $\det(M)>1$ and a vector
$v\in\Z^2$ denote by $M^v\colon \Z^2 \rtimes_{-\one} \Z/2 \selfmap$
the endomorphism given by a ``cross product structure''
(see~\cite{bartholdi-dudko:bc0}*{Proposition~\ref{bc0:prop:bis:2222}}):
\begin{equation}\label{eq:InjEndOfK:bc4}
  M^{v}(n,0)=(Mn,0) \text{ and }M^{v}(n,1)=(Mn+v,1).
\end{equation}  
Then $B(f)$ is isomorphic to $G$ as a set, with left and right actions
given by $g\cdot b\cdot h=M^v(g)b h$ for all $g,b,h\in G$.  Moreover,
$f\colon (S^2,P_f,\ord_f)\selfmap$ is combinatorially equivalent to
the quotient of $z\mapsto M z+v\colon \R^2/\Z^2\selfmap$ by the
involution $z\mapsto-z$.  Indeed, every endomorphism of $G$ is of the
form~\eqref{eq:InjEndOfK:bc4}.

\begin{algo}\label{algo:is2cover}
  \textsc{Given} a sphere biset ${}_G B_G$,\\
  \textsc{Decide} whether $B$ is the biset of a map double covered by
  a torus endomorphism \textsc{as follows:}\\\upshape
  \begin{enumerate}
  \item Compute the action of $B$ on peripheral conjugacy classes in $G$.
  \item Determine the minimal orbisphere structure $(S^2,\ord_B)$ from
    the action on peripheral conjugacy classes,
    see~\S\ref{ss:orbisphere}.
  \item Return \texttt{yes} if the Euler characteristic of
    $(S^2,\ord_B)$ is $=0$ and $\#B=4$, and \texttt{no} otherwise.
  \end{enumerate}
\end{algo}

\begin{algo}\label{algo:getparam}
  \textsc{Given} a sphere biset ${}_G B_G$ of a map double covered by
  a torus endomorphism,\\
  \textsc{Compute} parameters $M,v$ for the torus endomorphism
  $z\mapsto M z+v$ \textsc{as follows:}\\\upshape
  \begin{enumerate}
  \item As in Algorithm~\ref{algo:is2cover}, compute the action of $B$
    on peripheral conjugacy classes in $G$, and determine the quotient
    map $\pi\colon G\to\overline G$ to the minimal orbisphere
    structure, see~\S\ref{ss:orbisphere}, and the quotient biset
    ${}_{\overline G}\overline B_{\overline G}$.
  \item Note that $\overline G$ is of the form $\Z^2\rtimes\Z/2$,
    where the $\Z^2$ is generated by all even products of peripheral
    generators and the $\Z/2$ is generated by any chosen generator.
  \item Since the map corresponding to $B$ is a covering, the biset
    $\overline B$ is left-free and right-principal. Choose an
    arbitrary element $\overline x\in\overline B$, thus identifying
    $\overline B$ with $\overline G$ via
    $\overline x g\leftrightarrow g$.
  \item Let $\{g_0,g_1\}$ be a basis of $\Z^2\subset\overline G$, and
    choose a peripheral generator $h$ of $\overline G$. Write
    $g_0\overline x=\overline x g_0^a g_1^b$ and
    $g_1\overline x=\overline x g_0^c g_1^d$ for some $a,b,c,d\in\Z$
    which form the matrix
    $M=(\begin{smallmatrix}a&c\\ b&d\end{smallmatrix})$, and write
    $h\overline x=\overline x g_0^e g_1^f h$ for some $e,f\in\Z$
    forming the vector $v=(\begin{smallmatrix}e\\ f\end{smallmatrix})$.
  \end{enumerate}
\end{algo}

The following algorithm determines whether a biset is \Tor.  We shall
give, in~\cite{bartholdi-dudko:bc3}, a much more efficient encoding of
non-post-critical marked periodic points, and improve the speed of
Algorithm~\ref{algo:istor}. The present algorithm relies on the following
\begin{thm}[\cite{selinger-yampolsky:geometrization}*{Main Theorem~II}]\label{thm:nikita}
  Let $f$ be a Thurston map that is doubly covered by a torus endomorphism. If $f$ is Levy-free, then it is \Tor.
\end{thm}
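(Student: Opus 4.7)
My plan is to prove the contrapositive: if $f$ is doubly covered by a torus endomorphism $\tilde f(z)=Mz+v_0$ and $M$ has an eigenvalue $\epsilon\in\{\pm 1\}$, then $f$ admits a Levy cycle. Since $\det M=\deg f\ge 2$ is an integer and $\epsilon=\pm 1$, the other eigenvalue equals $\epsilon\det M\in\Z$, so both eigenvalues of $M$ are integers. Consequently there is a primitive integer eigenvector $u\in\Z^2$ with $Mu=\epsilon u$.

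I would then construct a Levy fixed curve on $S^2\setminus P_f$ as follows. Pick $c\in\R^2$ generic enough that the affine line $c+\R u$ avoids all four $2$-torsion points of $T^2=\R^2/\Z^2$, and let $\tilde\gamma\subset T^2$ be the closed geodesic it defines, of primitive integer direction $u$. The involution $z\mapsto-z$ carries $\tilde\gamma$ to the parallel leaf $-\tilde\gamma$ and acts freely on $\tilde\gamma$ (since $c\not\equiv -c\pmod{\Z^2}$ by choice of $c$); hence the projection $\gamma\subset S^2$ of $\tilde\gamma$ under the double cover is a simple closed curve disjoint from the four cone points, so is essential.

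Because $Mu=\epsilon u$, the foliation of $T^2$ by lines parallel to $u$ is $\tilde f$-invariant. Thus $\tilde f^{-1}(\tilde\gamma)$ is a disjoint union of $|\det M|$ parallel leaves of this foliation, and the restriction of $\tilde f$ to each such leaf is, in the $u$-direction, a translation composed with the linear map $\epsilon\cdot\mathrm{id}$, hence a homeomorphism onto $\tilde\gamma$; so each preimage leaf maps to $\tilde\gamma$ by degree $1$. Projecting to $S^2$, the $2|\det M|$ components of $\tilde f^{-1}(\tilde\gamma\cup-\tilde\gamma)$ pair up under the involution into $|\det M|=\deg f$ components of $f^{-1}(\gamma)$, each a simple closed curve mapped to $\gamma$ by degree $1$. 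On a $4$-punctured sphere, the isotopy class of an essential simple closed curve is determined by the partition of the punctures it separates, and this partition depends only on the direction $u\bmod 2$; therefore every preimage component of $\gamma$ is in fact isotopic to $\gamma$. In particular, $\gamma$ is a Levy fixed curve, contradicting Levy-freeness.

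The main technical obstacle is the descent from $T^2$ to $S^2$: one must verify (i) that distinct involution-pairs of $T^2$-leaves project to distinct $S^2$-leaves, which is ensured by the genericity of $c$ away from $2$-torsion, and (ii) that parallel $T^2$-leaves in a common primitive direction descend to mutually isotopic curves on $S^2$, which reduces to classifying essential curves on a $4$-punctured sphere by the induced partition. Granted these, the Levy-free hypothesis forces both eigenvalues of $M$ to differ from $\pm 1$, and hence $f$ is \Tor by definition.
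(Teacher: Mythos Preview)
The paper does not prove this statement; it is quoted from Selinger--Yampolsky, so there is no in-paper argument to compare against.

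Your proposal has one incorrect step and one structural gap. The incorrect step is the claim that on a four-punctured sphere the isotopy class of an essential simple closed curve is determined by the $2{+}2$ partition of the punctures: there are only three such partitions but infinitely many isotopy classes, parameterised by slopes in $\mathbb Q\cup\{\infty\}$. What is true, and what would rescue this step, is that the isotopy class is determined by the primitive direction $\pm u\in\Z^2$ of the lift to $T^2$; since all components of $\tilde f^{-1}(\tilde\gamma\cup(-\tilde\gamma))$ are again parallel to $u$ (because $M^{-1}u=\epsilon u$), they do descend to curves isotopic to $\gamma$ rel the four cone points --- but for this reason, not the one you gave.

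The structural gap is that your contrapositive is incomplete. You identify ``not \Tor'' with ``$M$ has an eigenvalue $\pm1$'', but this is only correct when the marked set $A$ coincides with the post-critical set $P_f$. A Thurston map in this paper's sense may carry additional marked points; see the set $A'$ of non-post-critical marked points in Algorithm~\ref{algo:istor}, which is exactly where the theorem is invoked. For such an $f\colon(S^2,A)\selfmap$, being \Tor\ means combinatorial equivalence \emph{rel $A$} to the quotient of $Mz+v$ with the extra points placed at specific (pre)periodic orbits, and this can fail even when both eigenvalues of $M$ differ from $\pm1$. Showing that Levy-freeness rel $A$ forces this realisation is precisely the content of Selinger--Yampolsky's result, and your argument does not touch it. Even in the eigenvalue-$\pm1$ case, your Levy curve and the claimed isotopies live on $S^2\setminus P_f$, not $S^2\setminus A$, so further work would be needed to promote them to a Levy cycle for $f\colon(S^2,A)\selfmap$.
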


\begin{algo}\label{algo:istor}
  \textsc{Given} a sphere biset ${}_G B_G$ of a map double covered by
  a torus endomorphism,\\
  \textsc{Decide} whether $B$ is the biset of a \Tor\ map \textsc{as
    follows:}\\\upshape
  \begin{enumerate}
  \item Use Algorithm~\ref{algo:getparam} to obtain a $2\times2$
    matrix $M$ expressing the linear part of the endomorphism covering
    $B$, and return \texttt{no} if $M$ has $\pm1$ as eigenvalue.
  \item Choose a basis $X$ of $B$. Using the action of $B$ on
    peripheral conjugacy classes, determine those (call them $A'$)
    that correspond to non-post-critical points.
  \item Make the finite list of all choices $\widehat{A'}$ of periodic
    points or preperiodic points on the torus that map to each other
    as the peripheral conjugacy classes map to each other under $B_*$.
  \item Run the following two steps in parallel. By
    Theorem~\ref{thm:nikita}, precisely one of them will terminate:
  \item For an enumeration of all multicurves $\CC$, check whether
    $\CC$ is a Levy cycle, and if so return \texttt{no}.
  \item For each choice $\widehat{A'}$ of periodic points, compute the
    biset $\widehat{B(A')}$ of the map $(z\mapsto M z+v)/\{\pm1\}$
    with $(\frac12\Z^2/\Z^2\cup\widehat{A'})/\{\pm1\}$ marked, and go
    through the countably many maps $X\to\widehat{B(A')}$. If one of
    these maps extends to an isomorphism of bisets, return
    \texttt{yes}.
  \end{enumerate}
\end{algo}

\begin{algo}\label{algo:isexp}
  \textsc{Given} a sphere biset ${}_G B_G$,\\
  \textsc{Decide} whether $B$ is the biset of an expanding map
  \textsc{as follows:}\\\upshape
  \begin{enumerate}
  \item Check, using Algorithm~\ref{algo:is2cover}, whether $B$ is
    double covered by a torus endomorphism. If not, run the next two
    steps in parallel. If $B$ is double covered by a torus
    endomorphism, then run Algorithm~\ref{algo:getparam} to obtain a
    $2\times2$ matrix $M$ expressing the linear part of the
    endomorphism, and run Algorithm~\ref{algo:istor} to decide whether
    ${}_G B_G$ is geometric biset. If ${}_G B_G$ is not a geometric
    biset or at least one eigenvalue of $M$ has absolute value less
    than $1$, then return \texttt{no}. Otherwise return \texttt{yes}.
  \item Enumerate all finite subsets of $G$, and check whether one is
    the nucleus of $(B,X)$. If so, return \texttt{yes}.
  \item Simultaneously, enumerate all multicurves $\CC$ on $(S^2,A)$,
    and check whether any is a Levy obstruction for $B$. If so, return
    \texttt{no}.
  \end{enumerate}
  By Theorem~\ref{thm:main}, either Step (2) or Step (3) will succeed.
\end{algo}

The following algorithm computes the Levy decomposition, and proves in
this manner Corollary~\ref{cor:decidelevy}:
\begin{algo}\label{algo:levy}
  \textsc{Given} a Thurston map $f\colon(S^2,A)\selfmap$ by its biset,\\
  \textsc{Compute} the Levy decomposition of $f$ \textsc{as follows:}\\\upshape
  \begin{enumerate}\setcounter{enumi}{-1}
  \item We are given a $G$-$G$-biset $B=B(f)$. Recall that multicurves
    on $(S^2,A)$ are treated as collections of conjugacy classes in
    $G$. Their $B$-lift is computable
    by~\cite{bartholdi-dudko:bc1}*{\S\ref{bc1:ss:ccgroups}}.
  \item For an enumeration of all multicurves $\CC$ on $(S^2,A)$, that
    never reaches a multicurve before reaching its proper
    submulticurves, do the following steps:
  \item If the multicurve $\CC$ is not invariant, or is not Levy,
    continue in~(1) with the next multicurve.
  \item Compute the decomposition of $B$ using the algorithm
    in~\cite{bartholdi-dudko:bc2}*{Theorem~\ref{bc2:thm:DecompOfBiset}};
  \item If all return bisets of the decomposition are either of degree
    $1$, or expanding (recognized using Algorithm~\ref{algo:isexp}),
    or \Tor\ (recognized using Algorithm~\ref{algo:istor}), then
    return $\CC$;
  \item Proceed with the next multicurve.
  \end{enumerate}
\end{algo}

\section{Amalgams}\label{ss:matings}
In the previous sections, we considered a single Thurston map --- or,
equivalently, sphere biset --- and characterized when it is
combinatorially equivalent to an expanding map.

In this section, we rather consider a Thurston map that is defined as
an ``amalgam'' of small maps, glued together along a multicurve; we
derive a criterion for the amalgam to be expanding. A typical example
is a \emph{formal mating}, which is a sphere map admitting an
``equator'' --- a simple closed curve $\gamma$ isotopic to its lift,
which maps back to $\gamma$ by maximal degree. We first give an
algebraic characterization in terms of bisets, and then its geometric
translation in terms of internal rays.

\subsection{Sphere trees of bisets}
We briefly recall
from~\cite{bartholdi-dudko:bc2}*{Definition~\ref{bc2:defn:gfofbisets}}
the notion of \emph{sphere tree of biset}: firstly, we are given a
tree $\gf$ of groups, namely a tree with a group attached to every
vertex and edge, and inclusions $G_e\to G_{e^-}$ and isomorphisms
$G_e\leftrightarrow G_{\overline e}$ from an edge $e$ respectively to
its source $e^-$ and its reverse $\overline e$. Secondly, we are given
analogously a tree $\gfB$ of bisets, and two graph morphisms
$\lambda,\rho\colon\gfB\to\gf$, such that $\rho$ is a graph
covering and $\lambda$ is monotonous (preimages of connected sets are
connected).

The graph of groups $\gf$ has a \emph{fundamental group}
$\pi_1(\gf,*)$ at each vertex $*\in\gf$; this is the group of
expressions of the form
$(g_0,e_0,g_1,\dots,e_{n-1},g_n)$ with
$(e_0,\dots,e_{n-1})$ a closed path in $\gf$ based at $*$ and
$g_i\in G_{e_i^-}$, subject to natural relations coming from the
edge group inclusions. Likewise, the graph of bisets $\gfB$ has
a \emph{fundamental biset}, which is an ordinary biset for the
fundamental group. Just as sphere bisets (up to isomorphism) capture
Thurston maps (up to isotopy), sphere trees of bisets capture Thurston
maps with an invariant multicurve.

Consider a sphere group $G$ and a sphere $G$-biset $B$. A \emph{Levy
  cycle} in $B$ is a periodic sequence of conjugacy classes
$g_0^G,\dots,g_{m-1}^G,g_m^G=g_0^G$ such that each $g_i^G$ is a
$B$-lift of $g_{i+1}^G$; namely, there are biset elements
$b_0,\dots,b_{m-1}\in B$ such that $g_i b_i=b_i g_{i+1}$ holds for all
$i=0,\dots,m-1$. More succinctly, in the product biset $B^{\otimes m}$,
we have the commutation relation $g_0 b=b g_0$.
\begin{lem}
  Let $f\colon(S^2,A)\selfmap$ be a Thurston map not doubly covered by
  a torus endomorphism map. Then $f$ admits a Levy cycle if and only
  if $B(f)$ admits one.
\end{lem}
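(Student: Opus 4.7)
The plan is to split the biconditional into its two natural directions, with the forward implication being essentially a dictionary translation and the backward implication leaning on the Expansion Criterion (Theorem~\ref{thm:ExpCr}) that has already been established.

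For the forward direction (topological $\Rightarrow$ algebraic), I would start from a Levy cycle $(\gamma_0,\dots,\gamma_{m-1},\gamma_m=\gamma_0)$ with its specified degree-one preimages $\gamma'_i\subseteq f^{-1}(\gamma_{i+1})$ isotopic to $\gamma_i$. Fix a basepoint $*\in S^2\setminus A$ and represent each $\gamma_i$ by a loop based at $*$, giving conjugacy classes $g_i^G$ in $G=\pi_1(S^2\setminus A,*)$. The isotopy between $\gamma_i$ and $\gamma'_i$, together with a choice of path from $*$ to a lift of $*$ lying inside the annular region of the isotopy, produces a biset element $b_i\in B(f)$; the degree-one condition on $f\colon\gamma'_i\to\gamma_{i+1}$ translates into the commutation relation $g_ib_i=b_ig_{i+1}$. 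This is the standard translation between curves and conjugacy classes and should be essentially bookkeeping.

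For the backward direction (algebraic $\Rightarrow$ topological), I would argue by contradiction using contraction. Assume $B(f)$ carries an algebraic Levy cycle; telescoping to $B(f)^{\otimes m}$, we have $g\in G$ non-peripheral and $b\in B(f)^{\otimes m}$ with $gb=bg$, hence $g^kb=bg^k$ for every $k\in\mathbb{Z}$. Writing $b=h\tilde b$ with $\tilde b$ a basis element of $X^{\otimes m}$, the relation takes the form $\tilde bg^k=(h^{-1}g^kh)\tilde b$. The hypothesis that $f$ is not doubly covered by a torus endomorphism forces the minimal orbisphere $(S^2,\ord_B)$ to be hyperbolic (the remaining cases $|A|\le 3$ have no essential simple closed curves and are trivial), so non-peripheral elements of $G$ project to elements of infinite order in any bounded orbisphere quotient $\overline G$. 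But then $\{h^{-1}g^kh\}_{k\in\mathbb{Z}}$ would have to lie in the nucleus of $(\overline G\otimes_G B(f)\otimes_G\overline G,\overline X)$, contradicting the finiteness of the nucleus. Hence $B(f)$ is not orbisphere contracting, and Theorem~\ref{thm:ExpCr} supplies a topological Levy cycle.

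The main obstacle I expect is the non-peripherality/infinite-order step in the backward direction: one must ensure that the algebraic Levy cycle does not consist of peripheral conjugacy classes (where $g_ib_i=b_ig_{i+1}$ is trivially realized by non-critical preimages of $A$), and that the chosen $g$ has infinite-order image in $\overline G$. The first point requires adopting the convention that algebraic Levy cycles involve only non-peripheral essential classes (matching the topological convention); the second point requires the standard fact that non-peripheral elements of a hyperbolic orbifold fundamental group are hyperbolic and hence of infinite order. A smaller secondary issue is the case of $f$ a homeomorphism not appearing in Theorem~\ref{thm:ExpCr}; there the claim follows directly from Nielsen-Thurston theory since a periodic conjugacy class of a surface homeomorphism of $(S^2,A)$ is represented by an essential simple closed curve, yielding a topological Levy cycle for which degree-one lifting is automatic.
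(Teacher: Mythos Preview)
Your overall strategy matches the paper's: the algebraic-to-topological direction goes through non-contraction of $B(f)$ and then invokes Theorem~\ref{thm:ExpCr}, while the topological-to-algebraic direction is the dictionary translation via lifts of loops.

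There is one genuine gap in your forward direction. The isotopy $\gamma_i\simeq\gamma'_i$ is between \emph{unoriented} curves, so after choosing based representatives $g_i\in G$ the lifting relation you actually obtain is only $b_ig_{i+1}\in g_i^{\pm G}b_i$, not $g_ib_i=b_ig_{i+1}$ on the nose. The conjugating element can be absorbed into $b_i$, but the sign is a monodromy obstruction: propagating orientations around the cycle may return you to $g_0^{-1}$ rather than $g_0$. The paper handles this by doubling the cycle to length $2m$, setting $g_{m+i}\coloneqq g_i^{-1}$ so that the signs close up; you should record this step rather than declare the translation ``essentially bookkeeping''.

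Your Nielsen--Thurston aside is both unnecessary and incorrect as stated: it is not true that a conjugacy class periodic under a mapping class is represented by an essential simple closed curve (for the identity mapping class, every non-simple $g\in G$ is a counterexample). The lemma is only applied inside the proof of Theorem~\ref{thm:algebraic amalgam}, where the fundamental biset has degree at least $2$, so Theorem~\ref{thm:ExpCr} covers the backward direction without any separate treatment of homeomorphisms.
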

\begin{proof}
  If $(g_0^G,\dots,g_{m-1}^G)$ is a Levy cycle in $B(f)$, then $B(f)$
  is not contracting, so $f$ is not expanding by
  Theorem~\ref{thm:main}, so $f$ contains a Levy cycle again by
  Theorem~\ref{thm:main}.

  Conversely, let $(\gamma_0,\dots,\gamma_{m-1})$ be a Levy cycle for
  $f$, and write each $\gamma_i$ as a conjugacy class $g_i^G$. Since
  each $\gamma_{i+1}$ has an $f$-lift isotopic to $\gamma_i$, there
  are biset elements $b_0,\dots,b_{m-1}$ such that
  $g_i^{\pm G}b_i\ni b_i g_{i+1}$. Up to replacing some $g_i$ by their
  inverses, we may assume $g_i^G b_i\ni b_i g_{i+1}$ except possibly
  $g_{m-1}^G b_{m-1}\ni b_{m-1}g_0$. In that case, increase $m$ to
  $2m$ and set $g_{m+i}=g_i^{-1}$ for $i=0,\dots,m-1$ so as to have
  $g_i^G b_i\ni b_i g_{i+1}$ for all $i$, namely
  $g_i^{h_i}b_i=b_i g_{i+1}$ for some elements $h_i\in G$. Set finally
  $c_i\coloneqq h_i b_i$ to obtain $g_i c_i=c_i g_{i+1}$ for all
  $i$. Thus $(g_0^G,\dots,g_{m-1}^G)$ is a Levy cycle in $B$.
\end{proof}

The following definition captures the notion of algebraic Levy cycles
for graphs of bisets:
\begin{defn}\label{defn:rpc}
  Let $\gfB$ be a sphere tree of bisets.  A \emph{periodic
    pinching cycle} for $\gfB$ is
  \begin{enumerate}
  \item a sequence of $m$ closed paths
    $\gamma_j\coloneqq(v_{0,j},e_{1,j},v_{1,j},\dots,e_{n,j},v_{n,j}=v_{0,j})$
    in the tree $\gfB$, for $j=0,\dots,m-1$, such that
    $\rho(\gamma_{j+1})=\lambda(\gamma_j)$, indices read modulo $m$;
  \item a sequence of $m\times n$ biset elements
    $b_{i,j}\in B_{e_{i,j}}$ and group elements
    $g_{i,j}\in G_{\rho(v_{i,j})}$, for $i=0,\dots,n-1$ and
    $j=0,\dots,m-1$, satisfying
    \[g_{i,j+1} b_{i+1,j}^-=b_{i,j}^+ g_{i,j}\text{ for all $i,j$},\]
    indices being read cyclically.\qedhere
  \end{enumerate}
\end{defn}
Consider a periodic pinching cycle. Note that the elements
$g_{0,j}\rho(e_{i,j})g_{1,j}\cdots\rho(e_{n,j})$, for $j=0,\dots,m-1$,
define elements of the fundamental group of $\gf$ based at
$\rho(v_{0,j})$, and that their conjugacy classes again produce a Levy
cycle for the fundamental biset of $\gfB$.

We shall always assume that periodic pinching cycles are non-trivial:
$m,n>0$ and the elements
$g_{0,j}\rho(e_{i,j})g_{1,j}\cdots\rho(e_{n,j})$ are reduced in the
fundamental group of $\gf$.

Recall also that, in a tree of bisets $\gfB$, vertices of
$\gfB$ are classified as \emph{essential} and
\emph{inessential}; every vertex $v\in\gf$ has a unique
$\lambda$-preimage $\iota(v)\in\gfB$ that is
essential. Consider a vertex $v\in\gf$, and assume that
$(\rho\circ\iota)^m(v)=v$ for some $m>0$. The corresponding
\emph{return biset} is
$B_{\iota(v)}\otimes\cdots B_{\iota(\rho\circ\iota)^{m-1}(v)}$, and is
a $G_v$-biset. We denote by $R(\gfB)$ the set of all return
bisets of $\gfB$.

Let $\gf$ be a tree of sphere groups with fundamental group
$G=\pi_1(\gf,*)$. Recall that the edge groups $G_e$ in $\gf$ embed as
cyclic subgroups of $G$. Choose a generator $t_e\in G_e$ for every
edge $e\in\gf$, and consider the collection of their conjugacy classes
$\CC=\{t_e^G\mid e\in E(\gf)\}$.  We call $\CC$ the \emph{edge
  multicurve} of $\gf$.

Given a sphere biset $B$, recall that its \emph{portrait} is the
induced map $B_*\colon A\selfmap$ on the set of peripheral conjugacy
classes. A portrait is \emph{hyperbolic} if every periodic cycle of
$B_*$ contains a critical peripheral class; i.e.~if $B$ is the biset
of a rational map $f$, then all critical points of $f$ are in the
Fatou set.

\begin{thm}\label{thm:algebraic amalgam}
  Let $\gfB$ be a sphere tree of bisets, and let
  $B\coloneqq\pi_1(\gfB)$ denote its fundamental biset. Assume that
  the portrait of $B$ is hyperbolic. Then $B$ is sphere contracting if
  and only if the following all hold:
  \begin{enumerate}
  \item All return bisets in $R(\gfB)$ are contracting;
  \item The edge multicurve of $\gfB$ contains no Levy cycle;
  \item There is no non-trivial periodic pinching cycle for $\gfB$.
  \end{enumerate}
\end{thm}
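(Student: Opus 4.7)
The plan is to reduce this theorem to Theorem~\ref{thm:main} combined with the plumbing construction from the proof of Theorem~\ref{thm:ExpCr}. First I realize the fundamental biset $B=\pi_1(\gfB)$ as $B(f)$ for some Thurston map $f\colon(S^2,A)\selfmap$ carrying an invariant multicurve $\CC$ (the edge multicurve of $\gf$) such that $\gfB$ encodes the decomposition of $f$ along $\CC$; such an $f$ exists by the correspondence of~\cite{bartholdi-dudko:bc2}. Under this translation, the return bisets in $R(\gfB)$ are the bisets of the small Thurston maps of the decomposition.

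\emph{Necessity.} Suppose $B$ is sphere contracting. By Theorem~\ref{thm:main} there is an expanding representative $f$; restricting its expanding metric to each periodic small sphere exhibits the corresponding small Thurston map as expanding, which by a second application of Theorem~\ref{thm:main} yields~(1). A Levy cycle in $\CC$ would be a Levy cycle for $f$, contradicting Theorem~\ref{thm:main}, which gives~(2). A periodic pinching cycle produces, via the concatenated elements $g_{0,j}\rho(e_{1,j})g_{1,j}\cdots\rho(e_{n,j})$, a Levy cycle in $B$ (as noted after Definition~\ref{defn:rpc}), again contradicting contraction; this gives~(3).

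\emph{Sufficiency.} Assuming the three conditions, I construct an expanding representative by plumbing, following \S\ref{subsec:ProofOfThmExpCrMain}. By~(1) and Theorem~\ref{thm:main}, each small Thurston map $f_i$ is combinatorially equivalent to a geometric (B\"ottcher expanding or \Tor) map, so each periodic small sphere inherits a natural orbifold metric which I adjust near cusps as in \S\ref{sss:AdjAtCusps} and pull back to preperiodic pieces. I then plumb along $\CC$ with widths $(w_\gamma)$ and lengths $(\ell_\gamma)$ as in \S\ref{sss:Plumbing}--\S\ref{sss:GlobMetr}, apply the perturbation of Lemma~\ref{lem:PertMetric}, and invoke Theorem~\ref{thm:main} to conclude that $B$ is sphere contracting.

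\emph{Main obstacle.} The content to verify is the analog of Proposition~\ref{prop:CombGl} under hypotheses~(2) and~(3) in place of the anti-Levy Cantor property enjoyed by the canonical obstruction. The width assignment is obstructed precisely by a Levy cycle in $\CC$, which is excluded by~(2). The length assignment is obstructed precisely by a primitive periodic unicycle in $\CC$, which I must identify with the data of a periodic pinching cycle for $\gfB$. In one direction, a primitive unicycle $(\gamma_0,\dots,\gamma_{m-1})$ yields a periodic pinching cycle whose $j$th path consists of the single edge $e_j\in\gfB$ corresponding to $\gamma_j$, with $b_{1,j}$ the degree-$1$ lift witnessing the isotopy from $\gamma_j$ to a component of $f^{-1}(\gamma_{j+1})$ and $g_{i,j}$ the ambient conjugators realizing that isotopy. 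Conversely, from a periodic pinching cycle one extracts a non-trivial conjugacy class in $\pi_1(\gf,*)$ representing a simple closed curve that is its own degree-$1$ iterated $B$-lift, hence a unicycle in $\CC$, and a primitive one after passing to a minimal periodic piece. With this combinatorial dictionary in hand, the plumbing construction yields the required expanding representative.
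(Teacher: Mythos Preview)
Your necessity argument is fine and essentially matches the paper.

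The sufficiency argument has a genuine gap. The dictionary you propose, ``primitive unicycle in $\CC$ $\Longleftrightarrow$ periodic pinching cycle for $\gfB$'', is false in both directions. In your ``one direction'', you speak of ``the degree-$1$ lift witnessing the isotopy'' for a primitive unicycle; but a primitive unicycle need not be Levy, and a non-Levy one (degree ${>}1$ on each curve) has no such degree-$1$ lift. Condition~(2) rules out only Levy cycles in $\CC$, not all primitive unicycles, and condition~(3) does not rule them out either: a periodic pinching cycle requires, in addition to a primitive unicycle, pairs of internal rays in adjacent Fatou components landing together, and these simply may fail to exist. The basic example is a formal mating: the equator is always a primitive unicycle of degree $d>1$, yet by Theorem~\ref{thm:mating} many matings satisfy all of (1)--(3). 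Thus your plumbing construction along $\CC$ cannot proceed, because the length assignment of Proposition~\ref{prop:CombGl} is genuinely obstructed by such unicycles. In your ``converse'' direction, the Levy cycle extracted from a periodic pinching cycle is a curve that \emph{crosses} $\CC$, not a curve \emph{in} $\CC$, so it does not yield a unicycle in $\CC$.

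The paper avoids this by arguing the contrapositive of sufficiency directly at the level of the fundamental biset: if $B$ is not contracting then by Theorem~\ref{thm:main} it carries a Levy cycle $\{\ell_0^G,\dots,\ell_{m-1}^G\}$; write each $\ell_j$ in normal form for the tree of groups $\gf$. Conditions~(1) and~(2) force these normal forms to have length $\ge 2$ (the $\ell_j$ are neither confined to a vertex group nor to an edge group). One then lifts each $\ell_{j+1}$ through $\gfB$ using left-fibrancy, observes that the lengths are non-increasing hence equal, and after a cyclic realignment reads off the data $(v_{i,j},e_{i,j},b_{i,j},g_{i,j})$ of a periodic pinching cycle. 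No expanding model for $f$ is ever built along $\CC$; the plumbing construction of~\S\ref{subsec:ProofOfThmExpCrMain} is invoked only implicitly through Theorem~\ref{thm:main}, and there it is applied to the \emph{canonical} obstruction $\CC_f$, which is Cantor by Lemma~\ref{lem:ExpThmFirstObserv}, not to the given $\CC$.
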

\begin{proof}
  Each of the conditions is clearly necessary: if a return biset of
  $\gfB$ is not contracting, then its image in $B$ is still not
  contracting; if an edge multicurve is a Levy cycle then it is a Levy
  cycle for $B$; and, by definition, a periodic pinching cycle has an
  iterated lift that is isotopic to itself, so every periodic pinching
  cycle generates a Levy obstruction.

  Conversely, assume that every return biset in $\gfB$ is contracting,
  that the edge multicurve $\CC$ of $\gfB$ is Levy-free, and that $B$
  is not contracting. Then by Theorem~\ref{thm:main} there is a Levy
  cycle in $B$. Write $G=\pi_1(\gf,*)$, and let
  $\{\ell_0^G,\dots,\ell_{m-1}^G\}$ denote this Levy cycle.  The
  conjugacy classes $\ell_j^G$ are not reduced to conjugacy classes in
  vertex or edge groups, because return bisets are contracting and the
  edge multicurve is Levy-free, so every $\ell_j^G$ admits a
  representative $\ell_j$ of the form
  $g_{0,j}f_{1,j} g_{1,j}\dots f_{n(j),j}\in \pi_1(\gf,w_j)$; here $f_{1,j}\dots f_{n(j),j}$ is a loop in $\gf$ based
  at $w_j$, and $g_{i,j}\in G_{f_{i,j}^+}$. Furthermore, if we require
  each $n(j)$ to be minimal, then this expression of a representative
  is unique up to cyclic permutation.

  Since $\{\ell_0^G,\dots,\ell_{m-1}^G\}$ is a Levy cycle, there are
  $b_0,\dots, b_{m-1}\in B$ with $\ell_j b_j=b_j \ell_{j+1}$ for all
  $j$. Furthermore, since the tree of bisets $\gfB$ is left fibrant,
  every $b_j\in B$ may be written as $b_j = h_j c_j$ for some
  $c_j\in B_{v_{0,j+1}}$ the vertex biset of a vertex $v_{0,j}\in\gfB$
  with $\rho(v_{0,j})=w_j$, and some element
  $h_j\in\pi_1(\gf,w_j,w_{j+1})$ in the path groupoid of $\gf$. We get
  \[\ell_j^{h_j} c_j = c_j \ell_{j+1}\text{ for all }j=0,\dots,m-1.\]

  Now, again because $\gfB$ is left fibrant, each path $\ell_j$ lifts
  by $\rho$ to a unique path
  $\gamma_j\coloneqq(v_{0,j},e_{1,j},v_{1,j},\dots,e_{n(j),j},v_{n(j),j}=v_{0,j})$,
  and the above equation gives
  $\lambda(\gamma_{j+1})=\ell_j^{h_j}$. In particular, the length of
  $\ell_j^{h_j}$ is at most the length of $\ell_{j+1}$; it follows
  that all $\ell_j^{h_j}$ are cyclically reduced, and all have the
  same length $n$.

  We may now redefine $\ell_j$ as the appropriate cyclic permutation
  of itself so that $\ell_j c_j=c_j\ell_{j+1}$ holds for all
  $j=0,\dots,m-2$, and we have
  $\ell_{m-1}^{h_{m-1}}c_{m-1}=c_{m-1}\ell_0$, where
  $\ell_{m-1}^{h_{m-1}}$ is a cyclic permutation of $\ell_{m-1}$. At
  worst replacing $m$ by $m n$ and letting $\ell_{km+j}$ be the
  appropriate cyclic permutation of $\ell_j$ for all $j=0,\dots,m-1$
  and all $k=0,\dots,n-1$, we may ensure that
  $\ell_j c_j=c_j\ell_{j+1}$ holds for all $j$. Set
  $c_{0,j}\coloneqq c_j$ and choose $c_{i,j}\in B_{f_{i,j}}$ so that
  $g_{i,j+1} c_{i+1,j}^-=c_{i,j}^+ g_{i,j}$ holds. We have constructed
  a periodic pinching cycle.
\end{proof}

Furthermore, it is decidable whether $\gfB$ admits a periodic pinching
cycle: for example, Algorithm~\ref{algo:isexp} tells us whether the
fundamental biset $B$ is expanding; in that case, there is no periodic
pinching cycle, while if not then a periodic pinching cycle may be
found by enumerating all $m n$-tuples of biset and group elements as in
Definition~\ref{defn:rpc}.

\subsection{Trees of correspondences}
The algebraic construction above is closely related to the topological
construction of an ``amalgam'' $\mathfrak F$ of maps.  We shall not
stress too precisely the conditions that must be satisfied by
$\mathfrak F$, but rather give an intuitive connection to the previous
subsection: on the one hand, such a formalism is well developed
in~\cite{pilgrim:combinations}; on the other hand, the algebraic
picture is the one that we use in practice.

We may start with the following data: firstly, one is given a finite
tree $\mathfrak T$ expressing a decomposition of a marked sphere
$(S^2,A)$. Let there be a topological sphere $S_v$ for every vertex
$v\in\mathfrak T$, and a cylinder (written $S_e$) for every edge
$e\in\mathfrak T$. There is a finite set $A_v\subset S_v$ of marked
points assigned to each vertex $v\in \mathfrak T$. If whenever $e$
touches $v$ one removes a small disk around a certain marked point
from $S_v$ and attaches its boundary to a boundary of the cylinder
$S_e$, one obtains after gluing a marked sphere $(S^2,A)$ so that $A$
is $\bigcup_v A_v\setminus\{\text{removed points}\}$.

Secondly, one is given a tree of correspondences: a tree $\mathfrak F$
also expressing a decomposition of a marked sphere and two graph
morphisms $\lambda,\rho\colon\mathfrak F\to\mathfrak T$. To every
vertex and edge $z\in\mathfrak F$ one is given a ``topological
correspondence'' between the spaces $\lambda(z)$ and $\rho(z)$. More
precisely, for each vertex $v\in \mathfrak F$ one is given a marked
sphere $(S_v,A_v)$, a covering map
$S_v\setminus A_v \to S_{\rho(v)}\setminus A_{\rho(v)}$, and an
inclusion $S_v\to S_{\lambda(v)}$ (note that $\lambda(v)$ needs not be
a vertex). Similarly, for every edge $e\in \mathfrak F$ one is given a
cylinder $S_e$ together with a covering map $S_e\to S_{\rho(e)}$ and
an inclusion $S_e\to S_{\lambda(e)}$.  The marked set $A$ is assumed
to be forward invariant and contains all critical values of all
correspondences $F_z$.

Typical examples to consider are matings (as we saw in the
introduction), for which the trees $\mathfrak T$ and $\mathfrak F$
have a singe edge; the correspondence at each vertex $v_\pm$ is the
polynomial $p_\pm$, and the correspondence at the edge is
$z\mapsto z^d$ if the cylinder is modelled on $\C^*$.

We denote by $R(\mathfrak F)$ the \emph{small maps} of $\mathfrak F$,
namely the return maps to vertex spheres obtained by composing the
correspondences along cycles. Again in the example of matings,
the small maps are $p_\pm$.

By the ``van Kampen theorem'' for bisets,
see~\cite{bartholdi-dudko:bc1}
and~\cite{bartholdi-dudko:bc2}*{Theorem~\ref{bc2:thm:pilgrim}}, we may
freely move between the languages of trees of correspondences
$\mathfrak F$, sphere trees of bisets $\gfB$, sphere bisets with
invariant algebraic multicurve $(B,\CC)$ represented as conjugacy
classes in the fundamental group, and Thurston maps with invariant
multicurve $f\colon(S^2,A,\CC)\selfmap$. We call $f$ the \emph{limit}
of $\mathfrak F$.

Let $\mathfrak F$ be a tree of correspondences with B\"ottcher
expanding return maps. Let $\CC$ denote the invariant multicurve
associated with the edges of $\mathfrak F$, namely $\CC$ is the set of
core curves of cylinders represented by edges of $\mathfrak T$. We
assume that $\CC$ is Levy-free. Let $\CC_0$ denote the union of
primitive unicycles in $\CC$. Consider $\gamma\in \CC_0$ and denote by
$S_e$ the cylinder with core curve $\gamma$, for $e\in \mathfrak
T$. Since $\gamma$ is contained in a primitive unicycle, there is a
unique $f\in \mathfrak F$ with $\lambda(f)=e$. We call the core curve
of $S_{\rho (f)}$ the \emph{image} of $\gamma$. In this manner, there
is a well defined (up to isotopy) first return map
$f_\gamma\colon \gamma\to \gamma$; up to isotopy we assume that
$f_\gamma$ is conjugate to $z\to z^d\colon S^1\selfmap$, with $d>1$
because $\CC$ is Levy-free.

The curve $\gamma$ is on the boundary of two small periodic spheres,
call them $S_1$ and $S_2$. By assumption, the first return maps on
$S_1$ and $S_2$ are B\"ottcher expanding. There are periodic Fatou
components $F_1\subset S_1$ and $F_2\subset S_2$ such that $\gamma$ is
viewed as the circle at infinity of $F_1$ an $F_2$. Then points in
$\gamma$ parametrize internal rays of $F_1$ and $F_2$, and periodic
internal rays are parameterized by periodic points of
$f_\gamma\colon \gamma\selfmap$, namely by rationals of the form
$m/(d^n-1)$ for some $m,n\in\N$.
 
\begin{defn}
\label{defn:topPinchCycle}
  Let $\mathfrak F$ be a tree of correspondences with expanding return
  maps. Let $\CC$ denote the invariant multicurve associated with
  the edges of $\mathfrak T$.  Let $\CC_0$ denote the union of the
  primitive unicycles in $\CC$.

  A \emph{periodic pinching cycle} for $\mathfrak F$ is a sequence
  $z_1,\dots,z_n$ of periodic points on $\CC_0$, and a sequence of
  internal rays $I_1^\pm,\dots,I_n^\pm$ in the Fatou components of
  small maps in $\mathfrak F$ touching $\CC_0$, such that, indices read
  modulo $n$,
  \begin{itemize}
  \item $I_i^+$ and $I_{i+1}^-$ are both parameterized by $z_i$, and
    lie in neighbouring spheres;
  \item $I_i^+$ and $I_i^-$ both land at the same point and in the
    same sphere.\qedhere
  \end{itemize}
\end{defn}

As mentioned above, topological periodic pinching cycles are the form
that Levy cycles take in trees of correspondences with expanding
return maps: given a Levy cycle, we may put it in minimal position
with respect to the multicurve $\CC$ associated with the edges of the
tree, and thus decompose the Levy cycle into periodic arcs, with arc
contained in a small sphere and connecting two boundary circles.

If we choose basepoints on the small spheres and boundary circles, and
paths from the boundary circle basepoint to the neighbouring sphere
basepoints, we may translate these arcs into loops in fundamental
groups of small spheres.

Even though it is not necessary for our argument, let us explain more
precisely how to construct an algebraic periodic pinching cycle out of
a topological one. For simplicity assume that all small spheres and
cylinders are fixed. Choose basepoints $*_t$ on all small spheres and
curves $S_t$ in $\mathfrak T$, identifying the group $G_t$ with
$\pi_1(S_t,*_t)$ and the biset $B_t$ with homotopy classes of paths
from $*_t$ to an $f$-preimage of $*_t$. Choose for each edge
$e\in\mathfrak T$ a path $\ell_e$ from $*_e$ to $*_{e^-}$.

Consider a periodic pinching cycle for $\mathfrak F$, and assume again
for simplicity that all rays $I_i^\pm$ are fixed. To every fixed point
$z_i$, say $z_i\in S_{t(i)}$, corresponds a biset element
$b_i\in B_{t(i)}$: choose a path $\gamma_i$ in $S_{t(i)}$ from
$*_{t(i)}$ to $z_i$, and set
$b_i\coloneqq\gamma_i\#f^{-1}(\gamma_i^{-1})$. Since $f$ is expanding,
the infinite concatenation of lifts $b_i^\infty$ is a path from
$*_{t(i)}$ to $z_i$. Note that we are using, here, the identification
of the circle $S_{t(i)}$ with the Julia set of $z^d$ for some $d>1$
and with the Julia set $\Julia(B_{t(i)})$ of the biset $B_{t(i)}$;
recall from~\S\ref{ss:limit} that it consists of equivalence classes
of bounded (here constant $b_i^\infty$) infinite sequences in
$B_{t(i)}$. Let the rays $I_i^\pm$ belong to sphere $S_{v(i)}$, and
set
$g_i\coloneqq\ell_{t(i-1)}^{-1}\#b_{i-1}^\infty\#I_i^-\#(I_i^+)^{-1}\#(b_i^\infty)^{-1}\#\ell_{t(i)}\in
G_{v(i)}$. Then these data $b_i,g_i,v(i),t(i)$ determine an algebraic
periodic pinching cycle with $m=1$. In general, the periodic pinching
cycle for $\mathfrak F$ will be periodic but not fixed, and $m$ will
be $>1$.

\begin{figure}
\centering
\begin{tikzpicture}
  \node at (0,0) {\includegraphics[width=\textwidth]{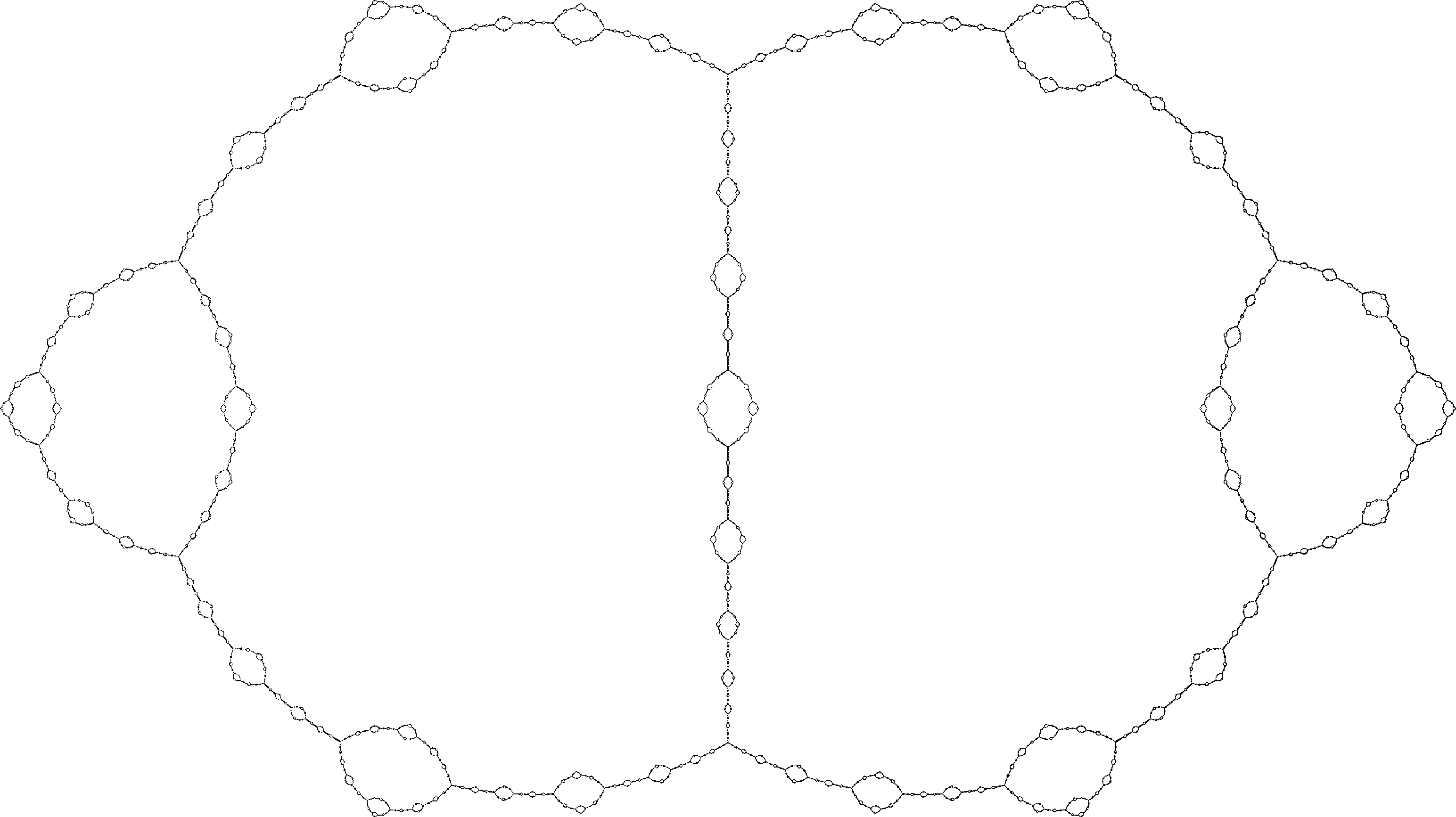}};
  \foreach\x/\l/\m/\n in {-2.2/1/+/-,2.2/3/-/+} {
    \node at (\x,0) {\includegraphics[width=34mm]{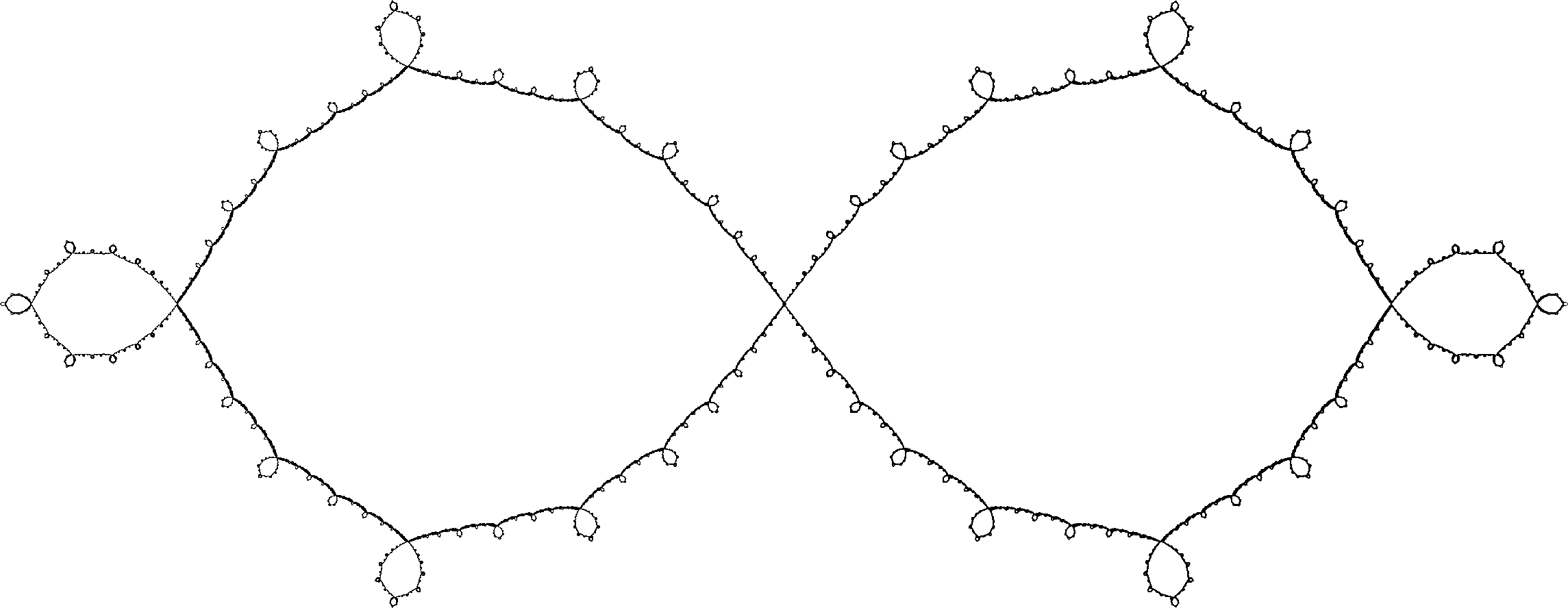}};
    \draw[red!25,ultra thick] (\x,0) circle (18mm);
    \draw[green!67!blue,thick] (\x,0) -- node[right,pos=0.6] {$I_\l^\m$} (\x,-1.8);
    \draw[green!67!blue,thick] (\x,0) -- node[right,pos=0.6] {$I_\l^\n$} (\x,1.8);
  };
  \fill (-2.2,-1.8) circle (2pt) node [below left] {$z_1$};
  \fill (2.2,-1.8) circle (2pt) node [below right] {$z_2$};
  \fill (2.2,1.8) circle (2pt) node [above right] {$z_3$};
  \fill (-2.2,1.8) circle (2pt) node [above left] {$z_4$};
  \foreach\x/\y/\Y/\p/\l/\m in {+/+/-/above/4/-,+/-/+/below/2/+,-/+/-/below/4/+,-/-/+/above/2/-} {
    \draw[green!67!blue,thick] (\x2.2,\y1.8) .. controls +(0.0,\y1.0) and +(\x0.866,\Y0.5) .. node[\p] {$I_\l^\m$} (0,\y2.9);
  }
\end{tikzpicture}
\caption{A periodic pinching cycle. There is a central fixed sphere
  mapping under $z^3\frac{2z-1}{2-z}$, and two spheres attached on the
  Fatou components of $0$ and $1$ mapping under $3z^2-2z^2$. The
  periodic pinching cycle is in green, and the edges of the tree of
  spheres are in red.}
\end{figure}

Given a Thurston map $f\colon(S^2,A)\selfmap$, recall that its
\emph{portrait} is the induced map $f\colon A\selfmap$ with its local
degree. A portrait is \emph{hyperbolic} if all its cycles contain a
point of degree $>1$.

\begin{thm}\label{thm:expamalgam}
  Let $\mathfrak F$ be a tree of maps with hyperbolic portraits. Then
  its limit $f\colon(S^2,A)\selfmap$ is isotopic to an expanding map
  if and only if the following all hold:
  \begin{enumerate}
  \item All small maps of $\mathfrak F$ are isotopic to expanding
    maps;
  \item The invariant multicurve associated with the edges of
    $\mathfrak T$ is Levy-free;
  \item There is no non-trivial periodic pinching cycle for
    $\mathfrak F$.
  \end{enumerate}
\end{thm}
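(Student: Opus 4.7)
The plan is to deduce Theorem~\ref{thm:expamalgam} from its algebraic counterpart, Theorem~\ref{thm:algebraic amalgam}, via the biset dictionary. First, I would apply the van Kampen theorem for bisets (the translation from trees of correspondences to sphere trees of bisets cited just before the theorem) to produce a sphere tree of bisets $\gfB$ whose fundamental biset is $B(f)$; under this dictionary, vertex bisets correspond to bisets of small maps of $\mathfrak F$, edge groups to the cyclic groups generated by core curves of the cylinders, and the hyperbolicity hypothesis on the portrait of $\mathfrak F$ transfers directly to hyperbolicity of the portrait of $B(f)$. By Theorem~\ref{thm:main} (and since hyperbolic portraits preclude the torus-endomorphism case), $f$ is isotopic to an expanding map if and only if $B(f)$ is orbisphere contracting.

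Next, I would invoke Theorem~\ref{thm:algebraic amalgam} for $\gfB$ and translate its three conditions. Conditions (1) and (2) of Theorem~\ref{thm:expamalgam} match almost verbatim: the return bisets of $\gfB$ are exactly the bisets of the small maps of $\mathfrak F$, and each is contracting if and only if the corresponding small map is isotopic to an expanding map, by another application of Theorem~\ref{thm:main}; meanwhile the edge multicurve of $\gfB$ equals the multicurve $\CC$ associated with the edges of $\mathfrak T$, and ``Levy-free'' means the same thing in both settings. The substantive work is in translating condition (3), namely the equivalence between topological periodic pinching cycles (Definition~\ref{defn:topPinchCycle}) and algebraic ones (Definition~\ref{defn:rpc}).

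For the direction topological $\Rightarrow$ algebraic, I would follow the explicit recipe sketched in the paragraphs preceding Theorem~\ref{thm:expamalgam}: each periodic point $z_i$ on a boundary circle is encoded as a constant infinite word $b_i^\infty$ in the limit space $\Julia(B_{e_i})$ of the edge biset via the identification from~\S\ref{ss:limit}, each pair of internal rays $I_i^\pm$ landing at a common point is encoded as a homotopy class of a loop in the appropriate vertex group giving the element $g_i$, and the landing-together condition becomes exactly the commutation $g_{i,j+1} b_{i+1,j}^- = b_{i,j}^+ g_{i,j}$.

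The reverse direction is the main obstacle. Given an algebraic periodic pinching cycle, the bounded periodic sequences $b_i^\infty$ determine periodic points in each $\Julia(B_{e_i})$ and hence, after choosing expanding representatives of the small maps (using condition~(1)), genuine periodic points $z_i$ on the corresponding boundary circles, parameterizing bona fide internal rays. One must then verify that the group elements $g_i$ are realized geometrically by arcs between common landing points of these rays, rather than merely as abstract homotopy classes. This can be done by a pullback/expansion argument in the style of Lemma~\ref{lem:ConjBetwExpMaps}: iterating the algebraic relation and using the expansion of the small maps forces the abstract arcs to converge to actual arcs inside Fatou components, whose endpoints must coincide on the Julia set because the $g_i$ close up after $m$ steps. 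Once this realization is in hand, the two definitions are equivalent, Theorem~\ref{thm:algebraic amalgam} applies, and the proof is complete.
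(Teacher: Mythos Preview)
Your high-level strategy---translate $\mathfrak F$ to a sphere tree of bisets $\gfB$ via the van Kampen dictionary, identify $B(f)$ with $\pi_1(\gfB)$, and invoke Theorem~\ref{thm:algebraic amalgam}---is exactly what the paper does in its one-line ``This is a direct translation of Theorem~\ref{thm:algebraic amalgam}.'' Your matching of conditions (1) and (2) is correct and is what the paper intends.

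The difference lies in how the non-trivial direction of condition~(3) is handled. You propose to prove directly that every \emph{algebraic} periodic pinching cycle (Definition~\ref{defn:rpc}) can be realised as a \emph{topological} one (Definition~\ref{defn:topPinchCycle}), via a pullback/expansion argument realising the group elements $g_i$ as genuine arcs. The paper sidesteps this translation entirely: instead of starting from an algebraic pinching cycle, it starts from the Levy cycle $L$ of $f$ itself (whose existence is guaranteed by Theorem~\ref{thm:main} when $f$ is not expanding), puts $L$ in minimal position with $\CC$, invokes Proposition~\ref{prop:solidPerCycl}\eqref{prop:solidPerCycl:2} to see that $L$ can only cross primitive unicycles of $\CC$, and then observes that the arcs of $L$ between consecutive crossing points are periodic arcs in expanding small spheres, hence isotopic to pairs of internal rays landing together. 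This produces a topological periodic pinching cycle directly. Your route would work with enough care, but the paper's geometric shortcut is cleaner and makes Proposition~\ref{prop:solidPerCycl}\eqref{prop:solidPerCycl:2} do the structural work that your proposed pullback argument would otherwise have to supply.
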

\begin{proof}
  This is a direct translation of Theorem~\ref{thm:algebraic
    amalgam}. It it instructive to give a geometric proof of the only
  non-trivial implication, namely that if $f$ admits a Levy cycle $L$
  then it admits a periodic pinching cycle.

  Put $L$ in minimal position with respect to $\CC$. By
  Proposition~\ref{prop:solidPerCycl}\eqref{prop:solidPerCycl:2}, a
  Levy cycle may only intersect a primitive unicycle. Choose a curve
  $\ell\in L$, and let $z_1,\dots,z_n$ denote, in cyclic order along
  $\ell$, the intersections of $\ell$ with $\CC$. Assuming that all
  small maps are expanding, the pieces of $\ell$ between points $z_i$
  and $z_{i+1}$ belong to Fatou components and their boundaries, and
  may be assumed to be internal rays. We have in this manner obtained
  a periodic pinching cycle.
\end{proof}

\subsection{Higher-degree matings}
We are ready to prove Theorem~\ref{thm:mating}. Note that, in the
case of matings, periodic pinching cycles of periodic angles are
precisely the periodic pinching cycles defined above for amalgams.

\subsubsection{Polynomials}\label{ss:polynomials}
Let $f$ be a complex polynomial of degree $d\ge2$. The \emph{filled-in
  Julia set} $\mathcal K(f)$ of $f$ is
\[\mathcal K(f)=\{z\in\C\mid f^n(z)\not\to\infty\text{ as }n\to\infty\}.\]
Assume that $\mathcal K(f)$ is connected, and let $\phi$ be the
inverse of the B\"ottcher coordinate associated with the Fatou
component of $\infty$, so we have
$\phi\colon\hC\setminus \mathcal K(f)\to\hC\setminus\overline{\mathbb
  D}$ satisfying $\phi(f(z))=\phi(z)^d$ and $\phi(\infty)=\infty$ and
$\phi'(\infty)=1$. For $\theta\in\R/\Z$, the associated \emph{external
  ray} $R_\theta$ is defined as
$\{\phi^{-1}(r e^{2i\pi\theta})\mid r>1\}$.

We have $\Julia(f)=\partial\mathcal K(f)$. Assume now that $f$ is post-critically finite; in particular $\Julia(f)$ is locally connected. Then the landing point
$\pi(\theta)\coloneqq\lim_{r\to1^+}\phi_f^{-1}(re^{2i\pi\theta})$ of
the ray $R_f(\theta)$ exists for all $\theta$, and defines a
continuous map $\pi\colon\R/\Z\to\Julia(f)$.

On the other hand, consider a basepoint $*\in\C\setminus A$ very close
to $\infty$, so that its preimages $*_0,\dots,*_{d-1}$ are all also
very close to $\infty$. Let $t\in\pi_1(\C,*)$ denote a short
counterclockwise loop around $\infty$, and choose for all
$i=0,\dots,d-1$ a path $\ell_i$ from $*$ to $*_i$ that remains in the
neighbourhood of $\infty$, and in such a manner that the paths
$\ell_i\#f^{-1}(t)$ and $\ell_{i+1}$ are homotopic for all
$i=0,\dots,d-2$, and $\ell_{d-1}\#f^{-1}(t)$ is homotopic to
$t\#\ell_0$. Here by `$s\#f^{-1}(t)$' we denote the concatenation of a
path $s$ with the unique $f$-lift of $t$ that starts where $s$ ends.

The following proposition illustrates the link between Julia sets (see
also~\S\ref{ss:fatou}) and bisets in the concrete case of polynomials.

\begin{prop}
\label{prop:JuliaEncod}
  The set $X\coloneqq\{\ell_0,\dots,\ell_{d-1}\}$ is a basis of $B(f)$. Let
  $\rho\colon\{0,\dots,d-1\}^\infty\to\R/\Z$ be the base-$d$ encoding
  map $x_1x_2\dots\mapsto\sum x_i d^{-i}$; then the following diagram
  commutes:
  \[\begin{tikzcd}
      X^\infty\ar[r,equal]\ar[dd,swap,"/{\sim}"] & \{0,\dots,d-1\}^\infty\ar[d,"\rho"]\\
      & \R/\Z\ar[d,"\pi"]\\
      \Julia(B(f))\ar[r,<->] & \Julia(f)
    \end{tikzcd}
  \]
  where $\sim$ is the asymptotic equivalence relation defined
  in~\S\ref{ss:limit}.
\end{prop}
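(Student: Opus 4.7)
The plan is to verify the two assertions separately. For the first, that $X = \{\ell_0, \dots, \ell_{d-1}\}$ is a basis of $B(f)$, I would appeal directly to the description of $B(f)$ given below~\eqref{eq:B(f)}: a basis of a left-free biset consists of a choice of one element per left $G$-orbit, and for $B(f)$ these orbits are indexed by the endpoints of representative paths, i.e.\ by the $d$ points of $f^{-1}(*)$. Since the $\ell_i$ are $d$ paths terminating at the $d$ distinct preimages $*_0, \dots, *_{d-1}$, they form a basis.

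For the commutativity of the square, I would first observe that the defining relations of the $\ell_i$ translate, via the right action $\ell \cdot g = \ell \# f^{-1}(g)$, into the biset identities $\ell_i\cdot t = \ell_{i+1}$ for $0 \le i < d-1$ and $\ell_{d-1}\cdot t = t\cdot \ell_0$. This is precisely the $d$-adic adding machine describing the action of $z\mapsto z^d$ on $\R/\Z$ via the base-$d$ encoding $\rho$. Using the B\"ottcher coordinate $\phi$, which conjugates $f$ on the basin of $\infty$ to $z\mapsto z^d$ on $\hC\setminus\overline{\mathbb D}$, and writing $\phi(*) = R e^{2\pi i t_0}$, I would then verify by induction on $n$ that for $x = x_1 x_2\dots \in X^\infty$ the endpoint of $\ell_{x_1}\#f^{-1}(\ell_{x_2})\#\dots\#f^{-(n-1)}(\ell_{x_n})$ has B\"ottcher coordinate
\[R^{1/d^n}\,\exp\Big(2\pi i\,\big(t_0/d^n+\textstyle\sum_{i=1}^{n}x_i/d^i\big)\Big),\]
the inductive step being precisely the adding-machine relations just recorded.

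As $n\to\infty$ these endpoints accumulate on $\partial\overline{\mathbb D}$ along the angle $\theta = \sum_{i\ge 1} x_i/d^i = \rho(x)$, hence converge in $\hC$ to the landing point $\pi(\theta)$ of the external ray $R_\theta$; the landing point exists because post-critical finiteness forces $\mathcal K(f)$ to be locally connected. Combined with Nekrashevych's identification~\cite{nekrashevych:ssg}*{Theorem~5.5.3}, which describes the isomorphism $\Julia(B(f))\to\Julia(f)$ precisely as the map sending the class of $x$ to the limit of the tiles built from the iterated $f$-preimages selected by $x$, this shows that the square commutes. The main obstacle will be the inductive B\"ottcher computation, since the $\ell_i$ are determined only up to homotopy in the basin of $\infty$; however, the adding-machine relations together with the fact that $\phi$ conjugates $f$ to $z\mapsto z^d$ force the B\"ottcher images of the $\ell_i$ to be homotopic to the $d$ radial sectorial lifts, after which the angle arithmetic is immediate.
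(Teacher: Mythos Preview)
Your approach is correct and closely parallels the paper's, differing mainly in the technique used for the key step. Both arguments identify the image of $[x_1 x_2\cdots]\in\Julia(B(f))$ in $\Julia(f)$ with the limit of the concatenated path $\ell_{x_1}\#f^{-1}(\ell_{x_2})\#\cdots$, and must then show that this limit is $\pi(\rho(x))$. You do this by an explicit inductive computation of the B\"ottcher coordinate of the $n$-th endpoint from the adding-machine relations, obtaining a sequence that converges radially to $e^{2\pi i\rho(x)}$, and then concluding via the continuous Carath\'eodory extension of $\phi^{-1}$ (available because $\Julia(f)$ is locally connected). The paper instead observes that each segment $f^{-k}(\ell_{x_{k+1}})$ has the same hyperbolic length in $\C\setminus K(f)$ as $\ell_{x_{k+1}}$, since $f$ is a self-covering of that domain; hence the whole concatenated path stays within a fixed hyperbolic distance $\delta$ of the geodesic external ray $R_f(\rho(x))$ and therefore lands at the same point. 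Your computation is more explicit and more elementary; the paper's one-line hyperbolic argument is terser but presumes the reader knows that a curve at bounded hyperbolic distance from a landing ray shares its landing point. Both routes tacitly rely on the same normalization of the labeling of the $*_i$ against the base-$d$ digits --- the point you flag as ``the main obstacle'' --- so your acknowledgement of that issue is appropriate rather than a gap peculiar to your argument.
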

\begin{proof}
  Consider $x_1 x_2\dots\in X^\infty$ with each $x_i=\ell_{m_i}$ for
  some $m_i\in\{0,\dots,d-1\}$. Then the path
  $x_1\#f^{-1}(x_2)\#f^{-2}(x_3)\cdots$ is a well-defined path in
  $\C\setminus \mathcal K(f)$, which has a limit because $f$ is
  expanding, and has the same limit as $R_f(\theta)$ for
  $\theta=\rho(m_1m_2\dots)$ because with respect to the hyperbolic metric of ${\C}\setminus K(f)$ there is a $\delta>0$ such that $x_1\#f^{-1}(x_2)\#f^{-2}(x_3)\cdots$ is in the~$\delta$-neighborhood of $R_f(\theta)$.
\end{proof}

\begin{proof}[Proof of Theorem~\ref{thm:mating}]
  $\eqref{thm:mating:1}\Rightarrow\eqref{thm:mating:2}$: assume that
  $p_+\FM p_-\colon\mathbb S\selfmap$ is combinatorially equivalent to
  an expanding map $h\colon S^2\selfmap$. Denote by $\Sigma$ the quotient
  of $\mathbb S$ in which all external rays are shrunk to points.

  Let $\Julia_\pm$ denote the Julia set of $p_\pm$ respectively, and
  denote their common image in $\Sigma$ by $\Julia$. We have a
  well-defined map $p_+\GM p_-\colon\Julia\selfmap$, and we shall see
  that it is conjugate to $h\colon\Julia(h)\selfmap$. Let
  $\pi_\pm(\theta)$ denote the landing point of the external ray with
  angle $\theta$ on $\Julia_\pm$.

  Fix a basepoint of $*$ at infinity, and choose a set $X$ of paths
  from $*$ to all its $(p_+\FM p_-)$-preimages on the circle at
  infinity; the cardinality of $X$ is the common degree of $p_+$ and
  $p_-$. The bisets $B(p_+)$ and $B(p_-)$ may be chosen to have the
  same basis $X$ consisting of these paths. Note that the basis $X$ is
  the standard one for $p_+$, but is reversed for $p_-$.  Let their
  respective nuclei be $N_\pm$. Denoting by $\sim_\pm$ the
  corresponding asymptotic equivalence relations we have, according
  to~\S\ref{ss:limit}, conjugacies
  \[X^\infty/{\sim_+}\cong\Julia_+\text{ and }X^\infty/{\sim_-}\cong\Julia_-.
  \]
  The bisets $B(h)$ and $B(p_+\FM p_-)$ are isomorphic, and since $h$
  is expanding the nucleus of $B(h)$ is contained in
  $(N_+\cup N_-)^\ell$ for some $\ell\in\N$. It follows that the
  equivalence relation $\sim_h$ associated with the nucleus of $N(h)$
  is generated, as an equivalence relation, by
  ${\sim_+}\cup{\sim_-}$. By Proposition~\ref{prop:JuliaEncod} we therefore have
  \begin{align*}
    \Julia(h)\cong X^\infty/{\sim_h} &\cong\frac{(X^\infty/{\sim_+})\sqcup(X^\infty/{\sim_-})}{[w]_{\sim_+}=[w]_{\sim_-}\text{ for all }w\in X^\infty}\\
    &\cong\frac{\Julia_+\sqcup\Julia_-}{\pi_+(\theta)=\pi_-(-\theta)\text{ for all }\theta\in S^1}\cong\Julia\subseteq \Sigma,
  \end{align*}
  conjugacies between the dynamics of $h$, $p_+\FM p_-$ and
  $p_+\GM p_-$.

  We then extend this conjugacy between the Julia sets of $h$ and
  $p_+\GM p_-$ to Fatou components, which are all discs. The critical
  portraits of $p_+\FM p_-$ and of $p_+\GM p_-$ coincide, so their
  periodic Fatou components are in natural bijection. Since every
  Fatou component is ultimately periodic, we extend the bijection by
  pulling back by $p_+\FM p_-$ and $p_+\GM p_-$ respectively. The
  bijection between the Julia sets restricts to bijections between
  boundaries of Fatou components, which are ultimately periodic
  embedded circles in the Julia sets; this extends uniquely the
  bijection between Julia sets to a conjugacy
  $(S^2,h)\to(\Sigma,p_+\GM p_-)$.

  $\eqref{thm:mating:2}\Rightarrow\eqref{thm:mating:3}$ is clear,
  because a pinching cycle is made of external rays, so it shrinks to
  a node in $X$, and therefore $X$ is not a
  topological sphere.

  $\eqref{thm:mating:3}\Rightarrow\eqref{thm:mating:1}$ is
  Theorem~\ref{thm:expamalgam}.
\end{proof}

We remark that the criterion due to Mary Rees and Tan Lei gives strong
constraints on pinching cycles of periodic angles in degree
$2$. Firstly, the associated external rays must land at dividing fixed
points. Secondly, in Definition~\ref{defn:topPinchCycle} it may be
assumed that $n=2$, namely each curve in a pinching cycle intersects
the equator in exactly two points.  This is not true anymore in higher
degree; here is an example in degree $3$.
\begin{exple}\label{exple:degree3}
  Consider the polynomials $q_\pm=\frac12z^3\pm\frac32z$. The
  polynomial $q_+$ has two fixed critical points at $\pm i$, and $q_-$
  exchanges its two critical points at $\pm1$.

  Let $p_+$ be the tuning of $q_+$ in which the local map $z^2$ is
  replaced by the Basilica map $z^2-1$ on the immediate basins of
  $\pm i$, and let $p_-$ be the tuning of $q_-$ in which the return
  map $z^2\circ z^2$ on the immediate basin of $1$ is replaced by
  $(1-z)^2+1\circ z^2$.  Then $p_\pm$ are polynomials of degree $3$,
  with $4$ finite post-critical points forming $2$ periodic
  $2$-cycles. The supporting rays for $p_+$ are
  $\{\{1/8,11/24\},\{5/8,23/24\}\}$ and those for $p_-$ are
  $\{\{1/8,19/24\},\{5/8,7/24\}\}$; the maps are
  $\approx z^3\pm 2.12132z$.

  The only periodic external rays landing together for $q_+$ are at
  angles $0$ and $1/2$, while the only periodic external rays landing
  together for $q_-$ are at angles $1/4$ and $3/4$. It follows that
  the only pairs of external rays landing together for $p_+$ and $p_-$
  are
  \begin{xalignat*}{2}
    R_{p_+}(1/8),& R_{p_+}(3/8) & R_{p_-}(1/8), & R_{p_-}(7/8)\\
    R_{p_+}(0),& R_{p_+}(1/2) & R_{p_-}(1/4), & R_{p_-}(3/4)\\
    R_{p_+}(5/8),& R_{p_+}(7/8) & R_{p_-}(3/8), & R_{p_-}(5/8)
  \end{xalignat*}
  
  It then follows that the sequence of rays $R_{p_+}(1/8)$,
  $R_{p_+}(3/8)$, $R_{p_-}(3/8)$, $R_{p_-}(5/8)$, $R_{p_+}(5/8)$,
  $R_{p_+}(7/8)$, $R_{p_-}(7/8)$, $R_{p_-}(1/8)$ is a periodic
  pinching cycle, so $p_+ \FM p_-$ is not equivalent to an expanding
  map. On the other hand, there does not exist any periodic pinching
  cycle with $n=2$.
\end{exple}

\begin{bibdiv}
\begin{biblist}
\bibselect{math}
\end{biblist}
\end{bibdiv}

\end{document}